\newtheorem{theorem}{Theorem}
\newtheorem{lemma}{Lemma}
\newtheorem{corollary}{Corollary}
\newtheorem{remark}{Remark}[section]
\newtheorem{definition}{Definition}[section]
\newtheorem{proposition}{Proposition}[section]
\newtheorem{assumption}{Assumption}[section]
\newcommand{\dd}{\mathrm{d}}
\newcommand{\cL}{\mathcal{L}}
\newcommand{\mM}{\mathcal{M}}
\newcommand{\cN}{\mathcal{N}}
\newcommand{\cR}{\mathcal{R}}
\newcommand{\cS}{\mathcal{S}}
\newcommand{\cT}{\mathcal{T}}
\newcommand{\cF}{\mathcal{F}}
\newcommand{\cO}{\mathcal{O}}
\newcommand{\cP}{\mathcal{P}}
\newcommand{\cM}{\mathcal{M}}
\newcommand{\cB}{\mathcal{B}}
\newcommand{\cE}{\mathcal{E}}
\newcommand{\TM}[1]{\operatorname{Tan}_{#1}\mathcal{M}}
\newcommand{\dx}[2]{\Delta x_{#1}^{(#2)}}
\newcommand{\dr}[2]{\Delta r_{#1}^{(#2)}}
\newcommand{\Id}{\operatorname{Id}}
\newcommand*{\KL}[1][\cdot]{\operatorname{KL}(#1|\rho_\infty)}
\newcommand{\Wspace}{\mathcal{P}_2(\mathbb{R}^d)}
\newcommand{\BWspace}{\cN_0^d}
\newcommand{\TspaceW}[1]{\operatorname{Tan}_{#1}\Wspace}
\newcommand{\TspaceBW}[1]{\operatorname{Tan}_{#1}\BWspace}
\newcommand{\rExp}{\operatorname{Exp}}
\DeclareMathOperator{\Tr}{Tr}
\title{Anderson Mixing in Bures Wasserstein Space of Gaussian Measures}
\author{
 Vitalii Aksenov \\
  WIAS \\
  Berlin, Germany \\
  \texttt{vitalii.aksenov@wias-berlin.de} \\
   \And
 Martin Eigel \\
  WIAS \\
  Berlin, Germany \\
  \texttt{martin.eigel@wias-berlin.de} \\
  \AND
    Mathias Oster \\ 
    RWTH \\
    Aachen, Germany \\
    \texttt{oster@igpm.rwth-aachen.de}
}
\begin{document}
\maketitle

 \begin{abstract}
     Various statistical tasks, including sampling or computing Wasserstein barycenters, can be reformulated as fixed-point problems for operators on probability distributions. 
     Accelerating standard fixed-point iteration schemes provides a promising novel approach to the design of efficient numerical methods for these problems. 
     The Wasserstein geometry on the space of probability measures, although not precisely Riemannian, allows us to define various useful Riemannian notions, such as tangent spaces, exponential maps and parallel transport, motivating the adaptation of Riemannian numerical methods. 
     We demonstrate this by developing and implementing the Riemannian Anderson Mixing (RAM) method for Gaussian distributions. 
     The method reuses the history of the residuals and improves the iteration complexity, and we argue that the additional costs, compared to Picard method, are negligible.  
     We show that certain open balls in the Bures-Wasserstein manifold satisfy the requirements for convergence of RAM.
     The numerical experiments show a significant acceleration compared to a Picard iteration, and performance on par with Riemannian Gradient Descent and Conjugate Gradient methods.

 \end{abstract}

 \section{Introduction}
        There is a plethora of tasks in statistics, ranging from sampling given a Bayesian posterior to calculating  Wasserstein barycenters and medians, that can be reformulated as a fixed-point problem in Wasserstein space. 
        
        For example, the dynamics of the Wasserstein gradient flow of certain functionals is contractive and, thus, induces fix point schemes. 
        To make this more precise, let $\Wspace$ be the space of probability measures with finite second moments
        \begin{equation*}
            \Wspace := \left\{\mu: \mathbb{E}_{x\sim\mu} \|x\|^2_2 < \infty \right\}
        \end{equation*}
        endowed with the 2-Wasserstein distance
        \begin{equation*}
            W^2_2(\mu_1, \mu_2) := \min_{\pi\in \Pi(\mu_1, \mu_2)} \int \|x - y\|^2\mathrm{d}\pi(x,y),
        \end{equation*}
        where $\Pi(\mu_1, \mu_2)$ denotes the set of probability distributions on $\mathbb{R}^d\times\mathbb{R}^d$ with marginals $\mu_1$ and $\mu_2$.

        As shown in~\cite{ambrosio2005gradient}, if some functional $\cE:\Wspace\to\mathbb R \cup\{\infty\}$ is $\lambda$-convex along generalized geodesics for $\lambda>0$, then for any $\mu \in \overline{D(\cE)} := \overline{\{\mu\in \Wspace: \mathcal E(\mu)<\infty\}}$ there exists a unique locally Lipschitz curve $S[\mu](t): (0, +\infty) \to \Wspace$, which is a gradient flow of $\cE$ with initial value $\mu$.
        For a fixed time $\tau$, the operator defined by $G_\tau(\mu) = S[\mu](\tau)$ satisfies the following contraction estimate:
        \begin{equation*}
            W_2(G_\tau(\mu_1), G_\tau(\mu_2)) \leq e^{-\tau \lambda} W_2(\mu_1, \mu_2).
        \end{equation*}
        If $\cE = \operatorname{KL}(\cdot|\rho_*)$ is the Kullback-Leibler divergence and $\rho_* \sim e^{-V}$ for some coercive potential $V$, then $\lambda$-convexity along generalized geodesics is equivalent to $\lambda$-convexity of $V$ and $\rho_*$ is a fixed point of $G$.
        The case of $\operatorname{KL}$ divergence is of particular interest, as a multitude of numerical algorithms for sampling can be interpreted as relaxations and approximations of this gradient flow. 
        Important examples are Langevin dynamics, see, e.g.,~\cite{wibisono2018sampling}, and Stein variational gradient descent, see, e.g., ~\cite{liu2016stein, duncan2023geometry}.
        
        Furthermore, MCMC methods such as Metropolis-Hastings (\cite{hastings1970monte}) or Metropolis-adjusted Langevin dynamics (\cite{roberts1996exponential}) have by design the target distribution as an invariant distribution of the Markov chain.
        Thus, they can also be seen as a fixed-point iteration.
        In addition, fixed-point iterations can be used to find the Wasserstein Barycenter as noted in~\cite{alvarez2016fixed}.
        
        Finally, it is known that for a convex function $f: \mathbb{R}^d \to \mathbb{R}$, a fixed point of $\mathrm{Id}-h\nabla f$ is the root of the gradient and, therefore, the global minimizer.
        Thus, a fixed-point method can be used as a first-order optimization method and can be applied to a variety of optimization problems.

        However, in various instances fixed-point schemes based on a contraction principle suffer from a slow convergence rate, for example if the contraction constant is close to unity. 
        In order to speed up the calculations one can generalize Euclidean or Riemannian acceleration schemes to Wasserstein space by exploiting the manifold-like structure. 
        Previously, similar ideas have been applied in gradient-based optimization.
        In \cite{kent2021modified}, the Frank-Wolfe method is adapted to the Wasserstein setting. 
        A Riemannian Frank-Wolfe method was applied to Wasserstein the barycenter problem in~\cite{weber2023riemannian}. 
        A counterpart of Nesterov acceleration in Wasserstein space is considered in~\cite{liu2018accelerated,chen2025accelerating}.
        Riemannian minimization on the Bures-Wasserstein manifold is studied in~\cite{han2021riemannian}.

        It is known that the space of probability measures on $\mathbb R^d$ 
        can be endowed with the {$2$-W}asserstein metric and then forms a complete separable metric space 
        (\cite{ambrosio2005gradient} Proposition~{7.1.5}).
        This space is not exactly Riemannian (i.e. there is no atlas of maps to subsets of a fixed linear space), but nevertheless all the important geometric notions, such as geodesics, tangent space, exponential mapping and parallel transport, can be defined. 
        However, the deviations of the geometry  from standard Riemannian assumptions, e.g. the exponential map having a vanishing injectivity radius~\cite{gigli2008geometry}, pose hurdles in designing schemes in Wasserstein space beyond the Gaussian case based on already known algorithms. 

    \textbf{Contributions:} 
    We adapt the Riemanian Anderson Mixing (RAM) algorithm introduced in \cite{li2023riemannian} to Gaussian measures in Bures-Wasserstein manifolds (referred to as BWRAM), verifying  some assumptions for the convergence analysis of RAM and identifying the limits of generalizing the methods to general measures in Wasserstein space. 
    The convergence result that we prove can be summarized as follows:
    \begin{theorem}[Convergence of BWRAM (informal statement)]
        Let $G(\Sigma) = \rExp_\Sigma(-F(\Sigma))$ be a contractive mapping on the Bures-Wasserstein manifold and $\Sigma_* \succeq \lambda \Id$ its fixed point with $\lambda>0$. 
        If the initial iterate $\Sigma_0 \in B_{W_2}(\Sigma_*, r)$ for a sufficiently small $r$, and under additional smoothness assumptions on $F$, then the following convergence rate holds
        \begin{equation}
            W_2(\Sigma_k, G(\Sigma_k)) \leq 
            \left[(1-\beta_k) + \kappa\beta_k\right]\|r_k\| + \sum_{i=0}^{\max\{m,k\}}\mathcal{O}(\|r_{k-i}\|^2),  \\
           \label{eq:RAM_local_convergence}
        \end{equation}
        where $\rExp$ denotes the Riemannian exponential map, $r_k = -F(x_k)$ the fixed-point residual, $m$ the maximal number of historical vectors and $\beta_k > 0$ is the relaxation parameter.
    \end{theorem}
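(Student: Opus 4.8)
The plan is to reduce the claim to the general local-convergence theorem for Riemannian Anderson Mixing proved in \cite{li2023riemannian}, whose hypotheses are a contraction property of the fixed-point map, smoothness of the residual field $F$, and uniform control of the underlying geometry --- bounded sectional curvature, a positive lower bound on the injectivity radius, and Lipschitz continuity of parallel transport --- on the region visited by the iterates. Contraction and smoothness are assumed outright, so the substantive work is to verify the geometric hypotheses on a ball $B_{W_2}(\Sigma_*, r)$ in the Bures-Wasserstein manifold and to show the iterates never leave it.

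First I would convert the spectral condition $\Sigma_* \succeq \lambda\Id$ into uniform two-sided eigenvalue bounds. Exploiting the relation between the $W_2$-distance of centered Gaussians and the spectral perturbation of their covariances, one fixes $r>0$ small enough that every $\Sigma \in \overline{B_{W_2}(\Sigma_*, r)}$ obeys $\tfrac{\lambda}{2}\Id \preceq \Sigma \preceq \Lambda\Id$ for some finite $\Lambda$. This confines the iterates to a compact subset of the open cone of positive-definite matrices, bounded away from the singular boundary where the Bures-Wasserstein metric degenerates.

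Second I would derive the geometric estimates on this set. The Bures-Wasserstein exponential map, its inverse, and parallel transport admit closed forms built from the Lyapunov operator $X \mapsto \Sigma X + X\Sigma$, whose eigenvalues are the pairwise sums of eigenvalues of $\Sigma$ and are therefore bounded below by $\lambda$ once $\Sigma \succeq \tfrac{\lambda}{2}\Id$. Hence on the ball the Lyapunov operator has inverse of norm at most $1/\lambda$, the sectional curvature (non-negative for this manifold) is bounded above, the exponential map is a diffeomorphism with injectivity radius bounded below, and parallel transport is Lipschitz, with every constant controlled by $\lambda$. Ball-invariance is then established inductively: for $r$ small, the factor $(1-\beta_k)+\kappa\beta_k<1$ dominates the quadratic remainder, the residuals decrease, and the iterates stay in $B_{W_2}(\Sigma_*, r)$.

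With the hypotheses secured, \eqref{eq:RAM_local_convergence} follows from the RAM estimate. The linear factor $(1-\beta_k)+\kappa\beta_k$ is the contraction rate of the relaxed Picard step applied to the least-squares-optimal mixed residual: a pure Picard step reduces the residual by $\kappa$, the relaxation convexly interpolates with the identity, and the mixing coefficients only help since the unit weight vector is always feasible. The remainder $\sum_{i}\mathcal{O}(\|r_{k-i}\|^2)$ gathers two second-order effects --- the Taylor error of $F$ along geodesics and the deviation of parallel transport from the identity --- each quadratic in the transport distances, which are comparable to the residual norms. I expect the main obstacle to be the geometric verification of the third paragraph: because the Bures-Wasserstein curvature and the injectivity radius degenerate as eigenvalues approach zero, the manifold lacks globally bounded geometry, so each constant entering the RAM hypotheses must be tracked explicitly in terms of $\lambda$ and the admissible radius $r$ chosen to respect all of them at once.
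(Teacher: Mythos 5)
Your proposal takes essentially the same route as the paper's own proof: the paper likewise converts the $W_2$-ball into two-sided eigenvalue bounds (\autoref{lem:lowerandupperboundsoneigenvalues}), deduces uniformly bounded sectional curvature and distance-realizing injectivity of the exponential map on $B_r(\Sigma_*)$ (\autoref{them:geom_main_assmp}), and then reruns the RAM analysis of \cite{li2023riemannian} --- residual decomposition via the contraction of $G$, curvature-controlled geodesic deviation, and second-order transport errors --- together with an induction keeping all iterates inside the ball. The only hypothesis you do not name is the uniform bound $\|\Gamma_k\|_\infty \leq M_\Gamma$ on the least-squares mixing coefficients (\autoref{asmp:bounded_least_square}), which the paper assumes separately (and enforces numerically through an $\ell_\infty$-constrained subproblem), since without it the constants hidden in the $\mathcal{O}(\|r_{k-i}\|^2)$ terms are not uniform and the ball-invariance induction would not close.
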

    The theoretical estimate says that, up to higher-order terms, the method is guaranteed to converge as good as Picard iteration. 
    We argue that the additional computation costs incurred by the method are negligible in comparison with the operator evaluation, and thus overall speedup of the solution is provided.
    The improved convergence behavior of BWRAM in comparison to Picard is illustrated with various numerical examples such as estimation of the steady-state of the Ornstein-Uhlenbeck process,  minimization of the Kullback-Leibler divergence, and accelerated solution of averaging problems, such as Wasserstein barycenters and geometric medians. 
    We also demonstrate that our method outperforms other well-known Riemannian minimization methods, such as Riemannian Gradient descent and Conjugate Gradient.

    \textbf{Limitations:} 
    The method achieves convergence speed superior to Picard iteration at the expense of additional memory costs and a solution of a small-scale minimization problem.
    The performance of the method is sensitive to the choice of hyperparameters, such as the number of history vectors.
    Although the Picard method is outperformed robustly for an arbitrary selection, the maximal performance is only achieved for particular values, which cannot be known in advance. 
    In $\mathbb{R}^d$, there exist strategies to mitigate the issue, for example, adaptive restarting, see, e.g., \cite{wei2023convergence} and regularization, see, e.g., ~\cite{wei2021stochastic}. 
    Their adaptation is left out to future work. 

    \textbf{Outline:} 
    The paper is structures as follows: In \autoref{sect:Anderson} we recall the Anderson mixing algorithm in Euclidean and Riemannian spaces. 
    Thereafter, we analyze the convergence behavior of the Anderson mixing on Bures-Wasserstein space based on \cite{li2023riemannian} while providing the proofs in the appendix.
    Finally, in \autoref{sect:numeric} numerical experiments are shown.

    The implementation of the method can be accessed at \url{https://github.com/viviaxenov/fpw}

\section{Anderson mixing in Euclidean space and for Riemannian manifolds with bounded sectional curvature}\label{sect:Anderson}
        Anderson mixing (AM) (\cite{fang2009two}) reuses previous calculations in order to accelerate fixed-point iterations. 
        Given the problem of finding $x$ such that $x=G(x)$
        for some operator $G: \mathbb{R}^d \to \mathbb{R}^d$, 
        at each iteration~$k$, AM uses the history of $m$ previous iterates $\{x_{k - i}\}_{i=0}^{m-1}$ and the values of the operator $g_i = G(x_i)$ to choose the next iterate by a linear combination of the histories and current update candidate.
        The coefficients are determined by finding weights $\{v_i\}_{i=1}^{m_k+1}$ such that $\left\|\sum_{i=1}^{m_k+1} v_i r_{k-i}\right\|_2$ is minimized. 
        In a linear space $\mathbb{R}^d$, it takes the form of \autoref{alg:basicAM}.
        \begin{algorithm}
            \caption{Anderson mixing in $\mathbb{R}^d$}
            \label{alg:basicAM}
            \begin{algorithmic}[1]
                \REQUIRE $x_0 \in \mathbb{R}^d$, relaxation parameters $0 \leq \beta_k \leq 1$, and memory parameter $m \geq 1$.
                \ENSURE A sequence $x_0, x_1, \dots$, intended to converge to a fixed-point of $G : \mathbb{R}^d \to \mathbb{R}^d$.
                \FOR{$k = 0, 1, \dots$ until convergence}
                    \STATE Compute $R_k = (r_{k - m}, \dots, r_k)$, where $r_i = G(x_i) - x_i$.
                    \STATE Solve
                    \begin{equation}\label{eq:linear_AA_minimization}
                        \alpha^{(k)} = \underset{v \in \mathbb{R}^{m+1} }{\arg\min}
                            \begin{aligned}
                                &\{\|R_k v\|_2\quad
                                \text{s.t. } \sum_{i=0}^{m} v_i = 1\}.
                            \end{aligned}
                    \end{equation}
                    \STATE Set
                    \begin{equation}\label{eq:linear_AA_update}
                        x_{k+1} = (1 - \beta_k) \sum_{i=0}^{m} \alpha_i^{(k)} x_{k - m + i} + \beta_k \sum_{i=0}^{m} \alpha_i^{(k)} G(x_{k - m + i}).
                    \end{equation}
                \ENDFOR
                \end{algorithmic}
        \end{algorithm}
        Note that the method can be reformulated using residuals $r_k = g_k - x_k$, such that finding the fixed-point is equivalent to setting the residual to zero.
        Introducing a shorthand notation for the forward finite difference
        \begin{equation}
            \Delta r_k = r_{k+1} - r_k,\ \Delta x_k = x_{k+1} - x_k,
        \end{equation}
        and the matrices
        \begin{align}\label{eq:anderson_rd}
            X_k &=[\Delta x_{k-m},\Delta x_{k-m+1},\ldots,\Delta x_{k-1}],\\
            R_k &= [\Delta r_{k-m},\Delta r_{k-m+1},\ldots,\Delta r_{k-1}],\\
            \Gamma &= \left(\gamma^{(k)}_{k - m}, \dots, \gamma^{(k)}_{k - 1} \right)^T,
        \end{align}
        the Anderson mixing in $\mathbb{R}^d$ can be rewritten as
        \begin{equation}
            \left\{
            \begin{aligned}
                & \Gamma_k = \arg\min_\Gamma \left\| r_k - R_k\Gamma \right\|_2,\\
                & \bar{r}_k = r_k - R_k\Gamma_k,\\
                & x_{k+1} = x_k-X_k\Gamma_k+\beta_k \Bar{r}_k.
            \end{aligned}
            \right.
        \end{equation}

        In the Euclidean setting, AM is reported to provide significant numeric advantage in comparison to Picard iteration, see~\cite{aksenov2021application, walker2011anderson} and references therein. Observe that the convergence analysis (\cite{evans2020proof}) in the Euclidean setting yields the estimate
        $$
            \|r_{k+1}\|\leq \frac{\|\bar r_k\|}{\|r_k\|}(1-\beta+\beta\kappa)\|r_k\|+\sum_{i=0}^{m_k}\mathcal O(\|r_{k+i}\|^2).
        $$ 
        where $\frac{\|\bar r_k\|}{\|r_k\|} \leq 1$.
        In the Riemannian setting we can not expect better results.
        As the manifold has no linear structure, differences of the residuals are not well defined. 
        In order to generalize AM to Riemannian manifolds, the definitions of $\Delta r_k$ and $\Delta x_k$ need to be adapted. 
        One needs to transport tangent vectors from the tangent space of previous iterates to the tangent spaces at the current iterate. 
        Therefore, a suitable vector transport has to be defined on the tangent bundle in order to allow linear combinations.
        Additionally, the update needs to be approximated by a retraction mapping to stay on the manifold.
       
        The Riemannian version of Anderson mixing (RAM), incorporating these ideas as introduced in~\cite{li2023riemannian}.
        Therein, the authors work in the setting where the operator takes the form
        \begin{equation}
            G(x) := \rExp_{x}(-F(x))\label{eq:operator_via_residual}
        \end{equation}
        for some vector field $F: x \mapsto \operatorname{Tan}_\cM(x)$.
        One can see that in such a setting that $x_*$ is a fixed-point if and only if $F(x_*) = 0$.
         The method is summarized in \autoref{alg:RAM} with the vector transport $\mathcal{T}_y^x: T_y\mM \to T_x\mM$ and the retraction mapping $\mathcal{R}_x: T_x\mM \to \mM$.
        \begin{algorithm}[ht]
            \caption{Riemannian Anderson mixing method}
            \label{alg:RAM}
            \begin{algorithmic}
                \REQUIRE $x_0 \in\mM, \epsilon, \beta_k > 0, m\in\mathbb{N}^*, k = 1$.
                \STATE $r_0 = - F(x_0)$ and $x_1 = \cR_{x_0}(r_0)$.
                \WHILE{$\|F(x_k)\| \geq \epsilon$}
                \STATE $m_k = \min\{m,k\}$.
                \STATE $\dx{k-i}{k} = \cT_{x_{k-1}}^{x_k}\dx{k-i}{k-1} \in\TM{x_k}, i = 1,\ldots,m_k$.
                \STATE $\dr{k-1}{k} = r_k-\cT_{x_{k-1}}^{x_k}\ r_{k-1}\in\TM{x_k}$.
                \IF{$k \geq 2$}
                \STATE $\dr{k-i}{k} = \cT_{x_{k-1}}^{x_k}\dr{k-i}{k-1} \in\TM{x_k}, i = 2,\ldots,m_k$.
                \ENDIF
                \STATE $X_k = [\dx{k-m_k}{k},\ldots,\dx{k-1}{k}], R_k = [\dr{k-m_k}{k},\ldots,\dr{k-1}{k}]$.
                \STATE $r_k = -F(x_k)$.
                \STATE $\Gamma_k = \arg\min_{\Gamma\in\mathbb{R}^{m_k}}\|r_k-R_k\Gamma\|$.
                \STATE $\Bar{r}_k = r_k-R_k\Gamma_k\in\TM{x_k}, \dx{k}{k} = -X_k\Gamma_k+\beta_k\Bar{r}_k \in\TM{x_k}$.
                \STATE $x_{k+1} = \cR_{x_k}(\dx{k}{k})$, $k = k+1$.
                \ENDWHILE
            \end{algorithmic}
        \end{algorithm}
        
 \section{The Bures-Wasserstein space of Gaussians}
    \subsection{Geometric notions}
    The set of Gaussian measures forms a sub-manifold of the <<manifold>> of probability measures with bounded second moment $\Wspace$ with respect to the Wasserstein metric, see, e.g., \cite{takatsu2010wasserstein,takatsu2011wasserstein, malago2018wasserstein}. The advantage of this restriction is that many geometric notions such as parallel transport have a more computationally feasible formulation in terms of matrix equations in the Gaussian setting than in the general case.

    We define the set of all Gaussians with zero mean as $\BWspace = \{\mathcal{N}(0, \Sigma), 0 \prec \Sigma \in \mathbb{R}^{d\times d}\}$. 
    The set $\BWspace$ can be identified with the set $\operatorname{Sym^{++}(d)}$ of symmetric positive-definite matrices (each measure is identified with its covariance matrix $\Sigma$). 
    Then the Wasserstein distance between two Gaussian measures can be computed as
    \begin{equation}\label{eq:dW_Gaussian}
        W^2_2\left(\mathcal{N}(0, \Sigma_0), \mathcal{N}(0, \Sigma_1) \right) =  \Tr{\Sigma_0} + \Tr{\Sigma_1} 
                - 2\Tr{\left(\Sigma_0^{\nicefrac{1}{2}}\Sigma_1 \Sigma_0^{\nicefrac{1}{2}}\right)^{\nicefrac{1}{2}}}.
    \end{equation}
    We will identify $\mathcal N(0,\Sigma)$ with $\Sigma$ synonymously and use $W_2(\Sigma_0,\Sigma_1)=W_2(\mathcal N(0,\Sigma_0),\mathcal N(0,\Sigma_1)).$
    Given a linear map $ T$, a Gaussian measure with covariance $\Sigma_0$ can be transformed to a new Gaussian with covariance $\Sigma_1=T \Sigma_0 T^*$ by $x\mapsto Tx$.
    Reversely, given two covariances $\Sigma_{0,1}$ there is an optimal linear map $ T$ that transforms the Gaussians into each other. 
    It can be computed as the unique positive solution of the Ricatti equation
    \begin{gather*}
        \Sigma_1 = T\Sigma_0T^*, \qquad
        T = \Sigma_1^{\nicefrac{1}{2}} \left(\Sigma_1^{\nicefrac{1}{2}}\Sigma_0\Sigma_1^{\nicefrac{1}{2}} \right)^{-\nicefrac{1}{2}}\Sigma_1^{\nicefrac{1}{2}} .
    \end{gather*}
    For two measures with parameters $(0, \Sigma_{0,1})$ consider the convex function
    \begin{equation}
        \mathcal{W}(x) = \frac{1}{2}\langle x, Tx \rangle 
    \end{equation}
    where $T$ is the optimal linear map interrelating $\Sigma_0$ and $\Sigma_1$.
    One can see that $(\Id, \nabla W)_\sharp \mathcal{N}(0, \Sigma_0)$ is the optimal transport coupling between $\mathcal N(0,\Sigma_0)$ and $\mathcal N(0,\Sigma_1)$.
    Furthermore, any point on the geodesic between the measures is also a Gaussian: $\rho_t = \mathcal{N}(0, \Sigma_t),\ t\in [0,\ 1]$ and is given by
    \begin{gather}
        \Sigma_t = \left((1 - t)\Id + tT\right)\Sigma_0\left((1 - t)\Id + tT\right) .\label{eq:geodesic_Gauss}
    \end{gather}
    \begin{remark}
        One can also analyze Gaussians with non-zero mean $m_0,m_1$ by adding $\|m_0-m_1\|^2$ to the distance. Then the linear map $T$ becomes affine and $\mathcal W(x)  =\frac{1}{2}\langle x-m_0, T(x-m_0) \rangle + \langle x, m_1\rangle $. The geodesic is augmented by $m_t = (1 - t)m_0 + tm_1 $.
    \end{remark}
    One key ingredient for adapting Anderson mixing to the manifold setting is the notion of tangent spaces and exponential maps. 
    In~\cite{takatsu2010wasserstein}, the tangent space at $\Sigma \in \BWspace$ is identified by $\operatorname{Sym}(d)$ with the scalar product and norm given respectively by
    \begin{equation}
        \langle U, V\rangle_\Sigma := \Tr{U\Sigma V},\quad \|V\|^2_\Sigma = \Tr{V\Sigma V} \label{eq:BW_scalar_prod}.
    \end{equation}
    By the embedding
    \begin{gather*}
        V \in \operatorname{Sym}(d) \mapsto  v(x) = Vx\in L^2_{\mathcal{N}(0, \Sigma_0)}(\mathbb R^d)
    \end{gather*}
    $\operatorname{Sym}(d)$ is identified with a subspace to the general  tangent space  in the Wasserstein space, as defined in \cite{ambrosio2005gradient}. 
        The exponential mapping takes the form
        \begin{equation}
            \rExp_{\Sigma}(V) := (\Id + V)\Sigma(\Id + V).
            \label{eq:rExp_gaussian}
        \end{equation}
Note that the exponential map is only well defined for $V$ with limited norm. For example for $V=-\Id$ the result of the exponential map is not positive definite.

    \subsection{Properties in the neighborhood of a nondegenerate distribution}
    In order to apply local convergence theory of RAM from~\cite{li2023riemannian}, one has to know certain properties of the manifold in the neighborhood of the fixed-point.
    In that regard, we have proven several novel statements, that allow to relate the Bures-Wasserstein distance between the distributions and norms of tangent vectors to Frobenius norms of the symmetric matrices that represent them.
    This significantly simplifies the subsequent numerical analysis.
    We summarize our findings in the following theorem:
    \begin{theorem}\label{them:super_main}
        Let $\Sigma_*$ be a nondegenerate covariance matrix, and 
        \[
            0 < \lambda_d^* \leq \dots \leq \lambda_1^*
        \]
        be the eigenvalues of $\Sigma_*$.
        Define the Bures-Wasserstein ball as
        \begin{equation}
            B_r(\Sigma_*) := \{ \Sigma \in \BWspace: W_2(\Sigma, \Sigma_*) \leq r\}
        \end{equation}
        and consider $r < \frac{1}{2}\sqrt{\lambda_d^*}$.
        
        Then, for $\Sigma, \Sigma_1, \Sigma_2 \in B_r(\Sigma_*)$:
        \begin{itemize}
            \item  $B_r(\Sigma_*)$ is compact in $\cN_0^d$.
            \item Its sectional curvature is uniformly bounded.
            \item The following estimates hold:
                \begin{gather}
                    (\sqrt{\lambda^*_d} - r)\|V\|_F \leq \|V\|_{\Sigma} \leq (\sqrt{\lambda^*_1} + r)\|V\|_F \label{eq:frob_tang}\\
                    \|V\|_{\Sigma_2} \leq \frac{\sqrt{\lambda^*_1} + r}{\sqrt{\lambda^*_d} - r} \| V\|_{\Sigma_1} \label{eq:tang_norm_two_points} \\ 
                    \|\Sigma_1 - \Sigma_2\|_F \leq {2\sqrt{2} (\sqrt{\lambda^*_1} + \sqrt{\lambda^*_d})} W_2(\Sigma_1,\ \Sigma_2) \label{eq:w2_frobenius_bound}
                \end{gather}
            \item         
                For every $V \in \TspaceBW{\Sigma}$ such that 
                    $\|V\|_\Sigma \leq \frac{1}{2}r$, 
                it holds that
                \begin{equation}
                    W_2(\Sigma, \rExp_\Sigma(V)) = \|V\|_\Sigma\ \forall\Sigma\in B_r(\Sigma_*) \label{eq:exponential_injective}               
                \end{equation}
        \end{itemize}
    \end{theorem}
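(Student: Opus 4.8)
The plan hinges on a single eigenvalue-localization lemma, from which every assertion follows with little extra work. I would start from the orthogonal Procrustes form of the distance in~\eqref{eq:dW_Gaussian}, namely $W_2(\Sigma,\Sigma_*) = \min_{U\in O(d)}\|\Sigma^{1/2} - \Sigma_*^{1/2}U\|_F$, which follows by writing $\|\Sigma^{1/2}-\Sigma_*^{1/2}U\|_F^2 = \Tr\Sigma + \Tr\Sigma_* - 2\Tr(U^\top\Sigma_*^{1/2}\Sigma^{1/2})$ and recognizing the maximum of the last trace over $O(d)$ as $\Tr((\Sigma^{1/2}\Sigma_*\Sigma^{1/2})^{1/2})$. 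Since multiplication by $U$ leaves singular values unchanged, Mirsky's inequality applied to $\Sigma^{1/2}$ and $\Sigma_*^{1/2}U$ gives $|\sqrt{\lambda_i(\Sigma)}-\sqrt{\lambda_i^*}|\le W_2(\Sigma,\Sigma_*)\le r$ for the sorted eigenvalues. Hence every $\Sigma\in B_r(\Sigma_*)$ obeys $(\sqrt{\lambda_d^*}-r)^2\le\lambda_i(\Sigma)\le(\sqrt{\lambda_1^*}+r)^2$, and the hypothesis $r<\tfrac12\sqrt{\lambda_d^*}$ forces the lower bound to be strictly positive, so $\Sigma$ stays nondegenerate.

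The three norm estimates are then essentially algebra. Writing $\|V\|_\Sigma^2 = \Tr(V\Sigma V)$ as in~\eqref{eq:BW_scalar_prod} and using the congruence monotonicity $\lambda_{\min}(\Sigma)V^2\preceq V\Sigma V\preceq\lambda_{\max}(\Sigma)V^2$, taking traces gives $\sqrt{\lambda_{\min}(\Sigma)}\|V\|_F\le\|V\|_\Sigma\le\sqrt{\lambda_{\max}(\Sigma)}\|V\|_F$; inserting the eigenvalue bounds proves~\eqref{eq:frob_tang}, and~\eqref{eq:tang_norm_two_points} is immediate by chaining the upper bound at $\Sigma_2$ with the lower bound at $\Sigma_1$. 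For~\eqref{eq:w2_frobenius_bound} I would let $U_0$ be the Procrustes optimizer, set $A=\Sigma_1^{1/2}$ and $C=\Sigma_2^{1/2}U_0$ so that $CC^\top=\Sigma_2$ and $\|A-C\|_F=W_2(\Sigma_1,\Sigma_2)$, and telescope $\Sigma_1-\Sigma_2 = A(A-C)^\top+(A-C)C^\top$; bounding each term by $\|\cdot\|_{\mathrm{op}}\|A-C\|_F$ with $\|A\|_{\mathrm{op}},\|C\|_{\mathrm{op}}\le\sqrt{\lambda_1^*}+r$ gives the claim (the stated constant $2\sqrt2(\sqrt{\lambda_1^*}+\sqrt{\lambda_d^*})$ is a comfortable overestimate once $r<\tfrac12\sqrt{\lambda_d^*}$ is used).

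For the exponential-map identity~\eqref{eq:exponential_injective}, the key observation is that $\rExp_\Sigma(tV)=(\Id+tV)\Sigma(\Id+tV)$ is a minimizing geodesic precisely when $\Id+V\succ0$: then $x\mapsto(\Id+V)x$ is the gradient of a convex quadratic and hence, by Brenier's theorem, the optimal transport map, so the displacement interpolation is a constant-speed minimizing geodesic whose length equals the initial speed $\|V\|_\Sigma$. It thus suffices to deduce $\Id+V\succ0$ from $\|V\|_\Sigma\le\tfrac12 r$: the lower bound in~\eqref{eq:frob_tang} gives $\|V\|_F\le\tfrac12 r/(\sqrt{\lambda_d^*}-r)<\tfrac12$ using $r<\tfrac12\sqrt{\lambda_d^*}$, whence $\|V\|_{\mathrm{op}}\le\|V\|_F<1$ and $\Id+V\succ\tfrac12\Id$. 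Compactness follows from the same localization: the eigenvalue confinement places $B_r(\Sigma_*)$ inside the compact set $\{a^2\Id\preceq\Sigma\preceq b^2\Id\}$ with $a=\sqrt{\lambda_d^*}-r>0$, and since $\Sigma\mapsto W_2(\Sigma,\Sigma_*)$ is continuous in the Frobenius topology (by~\eqref{eq:w2_frobenius_bound} and continuity of the matrix square root on this set), the closed ball is a closed subset of a compact set, hence compact by Heine--Borel.

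The sectional-curvature bound is where I expect the only genuine subtlety, since it is the one step that draws on an external description of the geometry rather than elementary matrix inequalities. The Bures--Wasserstein curvature arises from a Riemannian submersion and is a smooth (indeed rational) function of $\Sigma$ on the open cone $\operatorname{Sym}^{++}(d)$ whose only blow-up is as $\lambda_{\min}(\Sigma)\to0$; I would either invoke a closed-form curvature expression from the literature or the O'Neill formula for the submersion, and then conclude that, because $B_r(\Sigma_*)$ is compact with eigenvalues bounded below by $(\sqrt{\lambda_d^*}-r)^2>0$, the curvature is bounded by its maximum over this compact set. The main obstacle is thus not any single inequality but making the curvature formula explicit enough to certify the uniform bound; the localization lemma of the first paragraph is exactly what guarantees we stay uniformly away from the singular boundary where this argument would fail.
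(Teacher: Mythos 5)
Your proposal is correct, and on the central estimate \eqref{eq:w2_frobenius_bound} it takes a genuinely different route from the paper. For the eigenvalue localization, the paper (\autoref{lem:lowerandupperboundsoneigenvalues}) works directly from the trace formula \eqref{eq:dW_Gaussian} together with a von Neumann-type singular-value inequality, whereas you obtain the same $\ell_2$ bound on $|\sqrt{\lambda_i(\Sigma)}-\sqrt{\lambda_i^*}|$ from the orthogonal-Procrustes representation of $W_2$ plus Mirsky's inequality; these are equivalent in substance. The real divergence is in \eqref{eq:w2_frobenius_bound}: the paper (\autoref{lem:frobenius_by_bw}) runs a dynamic Benamou--Brenier argument, restricting to curves staying in a ball of radius $2r$, introducing the Lyapunov operator $P_\Sigma[V]=\Sigma V+V\Sigma$ and its inverse, bounding its spectrum, and comparing the Riemannian energy with the Euclidean straight-line energy, while you telescope $\Sigma_1-\Sigma_2=A(A-C)^\top+(A-C)C^\top$ with $A=\Sigma_1^{1/2}$ and $C=\Sigma_2^{1/2}U_0$ for the Procrustes optimizer $U_0$. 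Your static argument is shorter, avoids the curve-restriction technicalities, and yields the sharper constant $2(\sqrt{\lambda_1^*}+r)\le 2\sqrt{\lambda_1^*}+\sqrt{\lambda_d^*}$, which indeed implies the stated bound. The remaining items coincide with the paper's treatment: the tangent-norm bounds \eqref{eq:frob_tang}--\eqref{eq:tang_norm_two_points} via two-sided congruence and traces, the exponential-map identity \eqref{eq:exponential_injective} via Brenier optimality of gradients of convex quadratics (your $r<\tfrac12\sqrt{\lambda_d^*}$ gives $\Id+V\succ\tfrac12\Id$, slightly cleaner than the paper's borderline $\|V\|_o\le 1$ in \autoref{them:geom_main_assmp}), and the curvature bound via the explicit formula of Takatsu (the paper's \autoref{cor:sec_curve}), which both you and the paper cite rather than rederive. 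One small slip: in the compactness argument you invoke \eqref{eq:w2_frobenius_bound} for continuity of $\Sigma\mapsto W_2(\Sigma,\Sigma_*)$ in the Frobenius topology, but that inequality bounds the Frobenius norm \emph{by} $W_2$, which is the wrong direction; the continuity you need follows instead from the continuity of the trace and matrix square-root expressions in \eqref{eq:dW_Gaussian} on the eigenvalue-confined set, which you also mention, so the argument stands.
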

    The proof is split into multiple lemmata, which can be found in the Appendix,~\autoref{app:subsec_properties}.
    The overall idea is to show that a perturbation of the covariance matrix, which is small in terms of the Bures-Wasserstein distance, leads to a small perturbation of the eigenvalues.
    Uniform bound on the eigenvalues leads to bounded sectional curvature due to the explicit formulas, derived in~\cite{takatsu2010wasserstein}.
    Additionally, it allows to compare the tangent norm to the Frobenius norm by~\eqref{eq:frob_tang}.
    As the Frobenius norm does not depend on the current distribution, the dynamic optimal transport can be used to estimate the length of the Bures-Wasserstein geodesic by a Euclidean geodesic with the same endpoints, leading to~\eqref{eq:w2_frobenius_bound}.
    \begin{remark}
        The property~\eqref{eq:exponential_injective}  establishes the injectivity of the exponential map in a neighborhood of zero in the tangent space.
        Together with the sectional curvature bound, these properties are a cornerstone of the convergence analysis of RAM.
        These properties don not hold for the Wasserstein space of arbitrary measures, rendering the convergence analysis for general measures in Wasserstein space difficult. 
    \end{remark}

    \subsection{Construction of the vector transport map}    
    A vector transport mapping has to be defined in order to aggregate the historical vectors to in a common tangent space.
    The requirements on the mapping, sufficient for the convergence of RAM, are given in~\eqref{asmp:vt}.
    Intuitively, the vector transport mapping can be viewed as an approximation of the parallel transport.

    On the space $\Wspace$ the parallel transport is defined in~\cite{ambrosio2008construction} along a class of regular curves.
    The main property of such curves $\rho_t: [0,1] \mapsto \Wspace$, generated by some Lipschitz vector field $v_t$, is that there exist a family of mappings $\tau_s^t$, such that for arbitrary $s,\ t \in [0,1]$
    \begin{equation*}
        \rho_t = (\tau_s^t)_\sharp \rho_s
    \end{equation*}
    and the mappings satisfy the group property, i.e. $\tau_s^p \circ \tau_t^s = \tau_t^p$ and $\tau_s^t = (\tau_t^s)^{-1}$.
    Then, an isometric map called \emph{vector translation} between $L_{2,\rho_t}$ and $L_{2,\rho_s}$ is defined as $u \mapsto u \circ \tau_s^t$
    and the parallel transport is constructed as a limit of the approximation
    \begin{align}
        \mathscr{P}_{t_k}^{t_{k + 1}}u 
            &:= \Pi_{\TspaceW{\rho_{t_{k+1}}}}\left(u \circ \tau_{t_{k+1}}^{t_{k}}\right) \label{eq:vt_one_step_gen},\\
        \mathcal{P}[t_0,\dots t_N]u &:= \mathscr{P}_{t_{N-1}}^{t_N} \circ \mathscr{P}_{t_{N - 2}}^{t_{N-1}} \circ \dots \circ\mathscr{P}_{t_{0}}^{t_1}u \label{eq:pt_definition_gen}
    \end{align}
    with respect to partitions $0 = t_0 < t_1 < \dots < t_N = 1$ as $\max_k|t_{k+1} - t_{k}| \to 0$.
    
    In case of Gaussian measures, the geodesics are regular.
    As shown in~\cite{malago2018wasserstein}, the notion of parallel transport in the sense of~\cite{ambrosio2008construction} then coincides with standard Riemannian parallel transport.
    This mapping is a natural candidate, satisfying this Assumption~\ref{asmp:vt}.
    The ODE for the parallel transport was derived and can be solved numerically with a suitable adaptive ODE solver.
    However, integrating an ordinary differential equation on the manifold can be more numerically challenging than the original fixed-point problem, making parallel transport unsuitable as a component of the algorithm.
    Since the assumptions are quite relaxed, it makes sense to explore other options, trading off the exactness of the approximation with computational efficiency.

    Firstly, we consider an approximation inspired by the discrete scheme~\eqref{eq:pt_definition_gen} with just \emph{one} timestep for the approximation, i.e.
    \begin{equation}
        \cT_{\Sigma_0}^{\Sigma_1} U := \Pi_{\Sigma_1}\left(UT^{-1}_{01} \right),
    \end{equation}
    where $T_{01}$ is the matrix of the optimal transport map between $\Sigma_0$ and $\Sigma_1$.
    \begin{proposition}\label{prop:projection_formula}
        $\Pi_{\Sigma_1}(UT^{-1}_{01})$, the projection to the tangent space at $\Sigma_1$,  is the unique symmetric solution of the equation
        \begin{equation}
            \Sigma_1 X + X\Sigma_1  = \Sigma_1 UT^{-1}_{01} + \left(UT^{-1}_{01} \right)^\intercal\Sigma_1 \label{eq:BW_projection}. 
        \end{equation}
    \end{proposition}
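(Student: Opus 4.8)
The plan is to realize $\Pi_{\Sigma_1}$ as the orthogonal projection onto the tangent space $\TspaceBW{\Sigma_1}\cong\operatorname{Sym}(d)$ with respect to the metric $\langle\cdot,\cdot\rangle_{\Sigma_1}$, and then to read off the associated normal equations. First I would fix the ambient object: the transported field $UT^{-1}_{01}$ is a (generally non-symmetric) matrix, which I regard as the linear vector field $x\mapsto (UT^{-1}_{01})x$. The inner product that makes the embedding $V\mapsto Vx$ isometric into $L^2_{\mathcal{N}(0,\Sigma_1)}$ is $\langle P,Q\rangle_{\Sigma_1}=\Tr(P^\intercal Q\Sigma_1)$, and one checks directly that it restricts to $\Tr(U\Sigma_1 V)$ on symmetric arguments, matching \eqref{eq:BW_scalar_prod}. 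Because $\Sigma_1\succ 0$, this is a genuine inner product: it is positive definite, since $\Tr(P^\intercal P\Sigma_1)=\|P\Sigma_1^{1/2}\|_F^2$ vanishes only for $P=0$. Hence the orthogonal projection onto the closed subspace $\operatorname{Sym}(d)$ exists and is unique, and it is characterized by the variational condition that $X\in\operatorname{Sym}(d)$ and that $UT^{-1}_{01}-X$ be $\langle\cdot,\cdot\rangle_{\Sigma_1}$-orthogonal to every symmetric matrix.

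The core computation is to convert this orthogonality condition into a matrix equation. Writing $M=UT^{-1}_{01}-X$, the requirement $\langle M,Y\rangle_{\Sigma_1}=\Tr(M^\intercal Y\Sigma_1)=0$ for all $Y=Y^\intercal$ becomes, after cycling the trace, the statement that the pairing of $Y$ with a fixed product involving $\Sigma_1$ and $M$ vanishes for every symmetric $Y$. Since $\Tr(YK)=0$ for all symmetric $Y$ holds exactly when the symmetric part of $K$ vanishes -- a skew matrix is orthogonal to every symmetric one, and the form is nondegenerate on $\operatorname{Sym}(d)$ -- this forces the symmetric part of that product to be zero. Substituting $M=UT^{-1}_{01}-X$ and using $X^\intercal=X$ then collapses the expression to the Lyapunov-type (Sylvester) equation $\Sigma_1 X + X\Sigma_1 = \Sigma_1 UT^{-1}_{01} + (UT^{-1}_{01})^\intercal\Sigma_1$, i.e. the symmetrized transport term appears on the right. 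Keeping track of which factor carries the transpose is the one genuinely error-prone step, and I would guard against it by verifying that both sides of the resulting equation are manifestly symmetric.

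Finally I would establish existence and uniqueness of the symmetric solution directly, which simultaneously confirms that this equation characterizes the projection. The map $\cL_{\Sigma_1}\colon X\mapsto \Sigma_1 X + X\Sigma_1$ sends $\operatorname{Sym}(d)$ into itself, and in an orthonormal eigenbasis of $\Sigma_1$ with eigenvalues $\lambda_i>0$ it acts diagonally on matrix entries by the factor $\lambda_i+\lambda_j>0$. Therefore $\cL_{\Sigma_1}$ is invertible on $\operatorname{Sym}(d)$ and the equation has a unique solution; since the right-hand side is symmetric, this solution lies in $\operatorname{Sym}(d)$. By the orthogonality derivation of the previous step, this unique symmetric solution is precisely the minimizer of $\|UT^{-1}_{01}-X\|_{\Sigma_1}$, hence equals $\Pi_{\Sigma_1}(UT^{-1}_{01})$, as claimed.

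I expect the main obstacle to be bookkeeping rather than anything structural: correctly placing the transpose in the normal equations and confirming the right-hand side is symmetric so that the Lyapunov solver applies. The positivity $\lambda_i+\lambda_j>0$ that guarantees solvability is immediate from $\Sigma_1\succ 0$, so once the transpose placement is pinned down the remaining arguments -- positive-definiteness of the ambient form, the skew-symmetric annihilation lemma, and invertibility of $\cL_{\Sigma_1}$ -- are all routine.
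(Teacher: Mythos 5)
Your strategy (orthogonal projection onto $\operatorname{Sym}(d)$, normal equations, then invertibility of the Lyapunov map $X\mapsto\Sigma_1X+X\Sigma_1$) is sound in outline and close in spirit to the paper's proof, which poses the same minimization with the symmetry constraint handled by a Lagrange multiplier. However, there is a genuine transpose error at exactly the step you flagged as error-prone, and it stems from your choice of ambient inner product. You extend the tangent metric to all of $\mathbb{R}^{d\times d}$ as $\langle P,Q\rangle_{\Sigma_1}=\Tr(P^\intercal Q\Sigma_1)$ (the honest $L^2_{\mathcal{N}(0,\Sigma_1)}$ pairing of the linear fields $x\mapsto Px$). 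Carrying out your own orthogonality computation with this pairing: for the residual $A=UT^{-1}_{01}-X$, the condition $\Tr(A^\intercal Y\Sigma_1)=\Tr\bigl(Y\,(\Sigma_1A^\intercal)\bigr)=0$ for all symmetric $Y$ forces $\Sigma_1A^\intercal+A\Sigma_1=0$, which gives
\begin{equation*}
\Sigma_1X+X\Sigma_1=\Sigma_1\left(UT^{-1}_{01}\right)^\intercal+UT^{-1}_{01}\Sigma_1,
\end{equation*}
i.e.\ the transpose-swapped version of \eqref{eq:BW_projection}. This is not the same equation: writing $\Sigma_1=T_{01}\Sigma_0T_{01}$ one checks the two right-hand sides agree only under commutativity assumptions, and since the Lyapunov operator is injective the two symmetric solutions genuinely differ. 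Your proposed sanity check (both sides manifestly symmetric) cannot detect this, because both candidate right-hand sides are symmetric.

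The paper avoids this by extending the metric differently, as $\langle P,Q\rangle_{\Sigma_1}=\Tr(P^\intercal\Sigma_1Q)$, which agrees with \eqref{eq:BW_scalar_prod} on symmetric arguments but not off them; with that extension the residual condition becomes $A^\intercal\Sigma_1+\Sigma_1A=0$, which collapses exactly to \eqref{eq:BW_projection}. So to repair your proof you must either adopt the paper's ambient pairing, or prove the transposed identity instead. Your final step — existence and uniqueness of the symmetric solution because $\lambda_i+\lambda_j>0$ on an eigenbasis of $\Sigma_1$ — is correct and matches the paper's closing remark; the defect is confined to the normal-equation step, where the statement being proved silently depends on which of the two extensions of the tangent metric defines $\Pi_{\Sigma_1}$, a choice the paper fixes in its proof but the proposition statement leaves implicit.
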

    The proof can be seen in the Appendix,~\autoref{app:derivation_projection_formula}.

    The bound $\|\cT_{\Sigma_0}^{\Sigma_1} U\|_{\Sigma_1} \leq \|U\|_{\Sigma_0}$ for \autoref{asmp:vt} holds because vector translation is an isometry and because of the property of the orthogonal projection.
    \begin{figure}[htbp]
        \centering
        \includegraphics[width=0.55\linewidth]{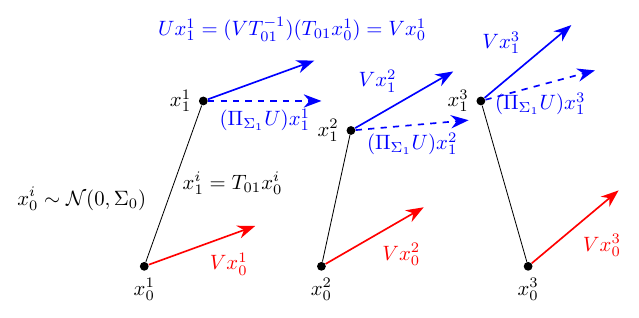}
        \caption{Approximate vector transport along Bures-Wasserstein geodesic }
        \label{fig:vt_gaussian}
    \end{figure}
    The approximation is illustrated in \autoref{fig:vt_gaussian}. 
    Linear map $T_{01}$ pushes the distribution with covariance $\Sigma_0$ forward to the one with covariance $\Sigma_1$.
    The individual particles move in a straight line: $x_1 = T_{01}x_0$.
    The vector translation of the vector field $v(x_0) = Vx_0$ is a composition of this move with $V$, i.e. $U = VT^{-1}_{01}$.
    Note that $U \in \TspaceBW{\Sigma_1}$ if and only if $V$ and $T^{-1}_{01}$ commute, which is not the case in general.
    Thus a projection $\Pi_{\Sigma_1}$ is needed.
    Note that the mapping is well-defined and satisfies the uniform norm bound from~\autoref{asmp:vt} with $M = 1$ for the whole $\BWspace$.
 
    Secondly, in a local neighborhood of a nondegenerate covariance matrix $\Sigma_*$, the identity mapping of $\operatorname{Sym}(\mathbb{R}^d)$ can be viewed as a linear operator between tangent spaces $\Id_{\Sigma_1}^{\Sigma_2}: \TspaceBW{\Sigma_1} \to \TspaceBW{\Sigma_2}$. 
    Due to \eqref{eq:tang_norm_two_points}, it also satisfies the bound
    \[
        \left\|\Id_{\Sigma_1}^{\Sigma_2} V\right\|_{\Sigma_2} \leq   \frac{\sqrt{\lambda^*_1} + r}{\sqrt{\lambda^*_d} - r} \|V \|_{\Sigma_1}\ \forall V \in \TspaceBW{\Sigma_1}
    \]
    which is uniform on the ball $B_r(\Sigma_*)$ with $r < \sqrt{\lambda^*_d}$.
    This is a perfect candidate from the computational perspective (no computation is needed), but the constant may affect the radius of the method's convergence.
    We provide the comparison of the vector transport mappings $\cT_{\Sigma_1}^{\Sigma_2}$ and $\Id_{\Sigma_1}^{\Sigma_2}$ in \autoref{sect:numeric}.

    \begin{remark}

            In this paper, we provide definitions, study the properties and derive numerical algorithms for Riemannian notions such as exponential map, vector transport, projection on the tangent space, etc.
            These can be used to adapt a broad range of Riemannian numerical methods, such as RGD, RCG, RLBFGS etc. to probabilistic tasks, that can be reformulated as fixed-point or optimization problems.
            Although in the convergence analysis we have focused on RAM, in the numerical experiments, we also compare its performance to RGD and RCG.
            Up to our best knowledge, this is the first time these methods are applied in the setting considered.
    \end{remark}

\section{Riemannian Anderson Mixing on Bures-Wasserstein space}
    \subsection{Convergence}
    In order to show the convergence of Anderson mixing in Bures-Wasserstein space we follow the lines of \cite{li2023riemannian}. 
    As we have explicit formulas for the parallel transport $\cP$ as well as the exponential map some arguments of \cite{li2023riemannian} can be simplified. Hence, we repeat the proof in the appendix. 
    As in the Euclidean case in \cite{toth2015convergence,wei2022class}, we impose the <<coercive>> and <<locally Lipschitz continuous>> property on the vector field $F$ as well as on its Jacobian $H$. 
     Define the fixed point map $G(\Sigma):= \rExp_{\Sigma}(-F(\Sigma))$. 
    \begin{assumption} 
        Assume $\Sigma_* \in \mathcal N_0^d$ with $\Sigma_*=G(\Sigma_*)$ and let $\lambda_i^*>0$ be its eigenvalues in decreasing order. 
        Assume there exists a ball $B_r(\Sigma_*)$ with  $r < \frac{1}{2}\sqrt{\lambda^*_d}$ and constants $0 < L_1 < L_2$ and $L_H>0$ such that for all $\Sigma_1,\ \Sigma_2 \in B_r(\Sigma_*)$ holds:\label{asmp:F_lip}
        \begin{gather}
            L_1W_2({\Sigma_1},{\Sigma_2}) \leq \|F({\Sigma_1})-\cP_{\Sigma_2}^{\Sigma_1} F({\Sigma_2})\|_{\Sigma_1} \leq L_2W_2({\Sigma_1},{\Sigma_2}),\\
            \left\| \cP_{\Sigma_2}^{\Sigma_1} H({\Sigma_2}) \cP_{\Sigma_1}^{\Sigma_2} - H({\Sigma_1}) \right\|_{\Sigma_1} \leq L_H W_2({\Sigma_1},{\Sigma_2}), \\
            W_2(G({\Sigma}_1),G({\Sigma}_2)) \leq \kappa W_2({\Sigma}_1,{\Sigma}_2) \text{ for some } \kappa < 1.
        \end{gather}
    \end{assumption}

    \begin{assumption}
        The vector transport $\mathcal T$ is continuously differentiable and there exists some $M>0$ such that 
        \[
            \left\|\mathcal T_{\Sigma_1}^{\Sigma_2} V\right\|_{\Sigma_2}\leq M\|v\|_{\Sigma_1}
        \] 
        for all ${\Sigma_1},{\Sigma_2}\in B_r(\Sigma_*),\ \forall V \in \TspaceBW{\Sigma_1}$.   
        \label{asmp:vt}
    \end{assumption}
Observe that for the vector transports we employ we have $M\leq 1$ and $M\leq \frac{\sqrt\lambda^*_1+r}{\sqrt{\lambda^*_d}-r}$ respectively.
    
    The final assumption concerns the uniform boundedness of the extrapolation coefficients $\Gamma_k$ in the RAM method. This assumption is common for analyzing convergence in the Euclidean setting, see~\cite{toth2015convergence}.  
    \begin{assumption}
        There exists a positive constant $M_{\Gamma}$ such that  $\|\Gamma_k\|_{\infty} \leq M_{\Gamma}$ for all $ k\in\mathbb{N}$.
    \label{asmp:bounded_least_square}
    \end{assumption}
    Following \cite{li2023riemannian}, we can formulate the following theorem.
    \begin{theorem}\label{them:Main}
        Assume $\{\Sigma_n\}_{n\geq 1}$ is generated by \autoref{alg:RAM} and suppose \autoref{asmp:F_lip}-\ref{asmp:bounded_least_square} hold. 
        Additionally, assume that initial point $\Sigma_0\in \mathcal N_0^d$ satisfies $W_2(\Sigma_0,\Sigma_*)\leq \frac{1}{1+L_2}\min\left\{\frac{r}{L_2(mM_1+\beta+1)},\frac{L_1-L_2+(1-\kappa)\beta L_2}{MmL_2^2}\right\}$.
        
        Then for all $k \geq 1$ we have $W_2(\Sigma_k,\Sigma_*)\leq \min\left\{\frac{r}{L_2(mM_1+\beta+1)},\frac{L_1-L_2+(1-\kappa)\beta L_2}{MmL_2^2}\right\}$ and it holds that
        \begin{equation}\label{eq:RAM_conv_est}
            \|r_{k+1}\|\leq \theta_k \left(1-\beta_k+\kappa\beta_k\right)\|r_k\| + \sum_{i=0}^{m_k}\mathcal O(\|r_i\|^2) \quad\text{ where }\quad\theta_k = \frac{\|\bar r_k\|}{\|r_k\|}\leq 1.
        \end{equation}
    \end{theorem}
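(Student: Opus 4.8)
The plan is to follow the Euclidean convergence analysis of Anderson acceleration (\cite{toth2015convergence, evans2020proof}) in the Riemannian reformulation of \cite{li2023riemannian}, exploiting the explicit Bures--Wasserstein formulas so that every estimate reduces to matrix computations controlled by \autoref{them:super_main}. The argument splits into two parts: first an induction showing that all iterates remain in the prescribed neighborhood of $\Sigma_*$ (so that \autoref{asmp:F_lip}--\ref{asmp:bounded_least_square} apply throughout), and then the one-step residual estimate \eqref{eq:RAM_conv_est}. The organizing identity is that, by injectivity of the exponential map \eqref{eq:exponential_injective}, the residual norm equals a distance, $\|r_k\| = W_2(\Sigma_k, G(\Sigma_k))$, which lets us convert freely between residual norms and Wasserstein distances and then invoke the contraction and bi-Lipschitz bounds of \autoref{asmp:F_lip}.

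For invariance of the ball I would argue by induction on $k$. Assuming $\Sigma_0,\dots,\Sigma_k$ lie in the prescribed neighborhood, I bound the update direction by $\|\dx{k}{k}\|_{\Sigma_k} \le \|X_k\Gamma_k\|_{\Sigma_k} + \beta_k\|\bar r_k\|_{\Sigma_k}$, controlling $\|X_k\Gamma_k\|$ through \autoref{asmp:bounded_least_square} ($\|\Gamma_k\|_\infty \le M_\Gamma$) together with the vector-transport bound of \autoref{asmp:vt} (constant $M$), and controlling $\|\bar r_k\| \le \|r_k\|$ by feasibility of $\Gamma=0$ in the least-squares step. Converting to distances via $L_1 W_2(\Sigma_k,\Sigma_*) \le \|r_k\| \le L_2 W_2(\Sigma_k,\Sigma_*)$ (using $F(\Sigma_*)=0$) and the triangle inequality then reproduces the quantitative radius conditions in the hypothesis, closing the induction.

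The heart of the proof is the one-step bound. I introduce the Anderson-averaged point $\tilde\Sigma_k := \cR_{\Sigma_k}(-X_k\Gamma_k)$ and claim its residual is $\bar r_k$ up to second order: linearizing $F$ along each increment gives $\dr{k-i}{k} \approx -H\,\dx{k-i}{k}$, with error governed by the Jacobian-Lipschitz bound $L_H$ and by the discrepancy between the vector transport $\cT$ and the true parallel transport $\cP$, whence $-F(\tilde\Sigma_k) \approx \cP_{\Sigma_k}^{\tilde\Sigma_k}(r_k - R_k\Gamma_k) = \cP_{\Sigma_k}^{\tilde\Sigma_k}(\bar r_k)$ and therefore $W_2(\tilde\Sigma_k, G(\tilde\Sigma_k)) = \|\bar r_k\| + \sum_{i=0}^{m_k}\mathcal{O}(\|r_{k-i}\|^2)$. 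The actual iterate $\Sigma_{k+1} = \cR_{\Sigma_k}(-X_k\Gamma_k + \beta_k\bar r_k)$ is then, up to a curvature-induced second-order correction, the point obtained by moving a fraction $\beta_k$ from $\tilde\Sigma_k$ toward its Picard image $G(\tilde\Sigma_k)$; since the residual at $\tilde\Sigma_k$ has norm $\|\bar r_k\|$ while the contraction of \autoref{asmp:F_lip} bounds the residual at $G(\tilde\Sigma_k)$ by $\kappa\|\bar r_k\|$, interpolation gives $\|r_{k+1}\| \le (1-\beta_k+\kappa\beta_k)\|\bar r_k\| + \sum_{i=0}^{m_k}\mathcal{O}(\|r_{k-i}\|^2)$. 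Writing $\|\bar r_k\| = \theta_k\|r_k\|$ with $\theta_k \le 1$ then yields \eqref{eq:RAM_conv_est}.

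The main obstacle is the uniform control of the second-order remainders together with a proof that the history coefficients do not amplify them. Three error sources must be estimated on all of $B_r(\Sigma_*)$ simultaneously: the Taylor remainder of $F$ (via $L_H$), the gap between $\cT$ and genuine parallel transport (via the continuous differentiability assumed in \autoref{asmp:vt}), and the non-associativity of composing retractions, i.e. the difference between one combined retraction at $\Sigma_k$ and the two-step construction through $\tilde\Sigma_k$, which is a true curvature effect. Each is quadratic in the relevant tangent norms, but the contributions accumulate over the $m_k$ history slots weighted by $\Gamma_k$; keeping the aggregate at $\sum_{i=0}^{m_k}\mathcal{O}(\|r_{k-i}\|^2)$ is exactly where \autoref{asmp:bounded_least_square} enters, while uniformity of the quadratic constants rests on the compactness, bounded sectional curvature, exponential-map injectivity, and the norm equivalences \eqref{eq:frob_tang}--\eqref{eq:w2_frobenius_bound} supplied by \autoref{them:super_main}.
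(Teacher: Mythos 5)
Your proposal is correct and follows essentially the same route as the paper's proof: both introduce the Anderson-averaged point $\bar x_k = \rExp_{x_k}(-X_k\Gamma_k)$, combine the contraction assumption with a curvature (retraction-composition) estimate and a Taylor/transport estimate showing $-F(\bar x_k) = \cT_{x_k}^{\bar x_k}\bar r_k + \mathcal{O}\bigl(\sum_i \|r_{k-i}\|^2\bigr)$, and close with the same induction keeping all iterates in the prescribed ball via $L_1 W_2(\Sigma_k,\Sigma_*)\leq \|r_k\|\leq L_2 W_2(\Sigma_k,\Sigma_*)$. The only difference is bookkeeping: you run the triangle inequality through the relaxed Picard point of $\bar x_k$, whereas the paper goes through $G(\bar x_k)$ and the full-step point $\tilde x_{k+1}=\rExp_{x_k}(-X_k\Gamma_k+\bar r_k)$, which yields the identical bound.
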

    \begin{remark}
        The hidden constants in $\sum_{i=0}^{m_k}\mathcal O(\|r_i\|^2)$ take the form $$c_0\tilde rm M_1^2+M_3+2M_\Gamma M_2$$ with $$M_1=M_\Gamma\frac{1}{L_1}\max\{M^{m_k}+M^{m_k-1},M^2+M\}+1.$$
        In contrast, in \cite{li2023riemannian} the constants read $$\left((\kappa+1)(C+\sqrt{K}) + \frac{c_0\tilde r}{2} +\sqrt{K}\max\{1,L_2\}\right)m M_1^2+m\left(M_3+2M_\Gamma M_3 \right)$$ with $$M_1 =\max\{\frac{MM_\Gamma}{L_1 C}+1,M_\Gamma \tilde M,\tilde M \}$$
        where $$\tilde M = \frac 1{L_1 C}\max\{M^{m_k}+M^{m_k-1},M^2+M\} $$
    \end{remark}
    
    Since the mapping is assumed to be contractive, the decrease in residual can be directly linked to the distance to the solution
    \begin{gather*}
        W(\Sigma_k, \Sigma_*) \leq W(G(\Sigma_k), \Sigma_*)) + W(G(\Sigma_k), \Sigma_k)) \leq \kappa W(\Sigma_k, \Sigma_*) + W(G(\Sigma_k), \Sigma_k)) \\
        W(\Sigma_k, \Sigma_*) \leq \frac{1}{1 - \kappa} \| r_k \| 
    \end{gather*}
    The proof can be found in the appendix (\ref{proof:Main}) and follows \cite{li2023riemannian} very closely while improving the constants and simplifying some steps for this particular case.

    The estimate~\eqref{eq:RAM_conv_est} can be interpreted as follows.
    The method decreases the residual at least as good as a relaxed Picard iteration (the residual decrease in that case would be exactly $\left(1-\beta_k+\kappa\beta_k\right)\|r_k\|$), up to ``higher order terms'' $O(\sum_i \|r_{k - i} \|^2)$. 
    When the iterate is sufficiently close to the solution, the higher order terms can be neglected.
    The coefficient $\theta_k = \frac{\|\bar r_k\|}{\|r_k\|}$ quantifies the gain, compared to the Picard step, and depends on the quality of the solution of the subproblem.
    The estimate suggests to keep the history length of the method bounded and relatively short. 
    This intuition is indeed backed by our practical experience, where a significant acceleration can be achieved for quite modest history lengths $m \in {1, \dots, 5}$. 
    As of now, estimates of such type (i.e. including $O$ terms) are a state of the art for the case of Anderson Mixing on nonlinear problems even on the Euclidean space (see \cite{sun2021damped,liu2024anderson,wei2023convergence,evans2020proof}). 

    \subsection{Examples of operators}
    In the following we describe several examples of fixed-point problems, suitable for acceleration with BWRAM.
    We reflect on their contractive properties and and discuss the possible applications.
        \subsubsection{Ornstein-Uhlenbeck process}
        The evolution of the Ornstein-Uhlenbeck process is a suitable model problem, since it is known that the induced fixed-point operator is contractive.
        It is defined by the stochastic differential equation
        \begin{equation}
            \mathrm{d}X_t = -\Sigma_*^{-1}X_t \mathrm{d}t + \sqrt{2}\mathrm{d}W_t
            \label{eq:OU_process}
        \end{equation}
        as follows: if $X_0 \sim \cN(0,\Sigma_0)$, then for some fixed timestep parameter $\tau$,  $G(\Sigma_0)$ is the law of $X_\tau$.
        The process can be identified with the gradient flow of the Kullback-Leibler divergence $\operatorname{KL}(\cdot | \cN(0, \Sigma_*))$ w.r.t. the {$2$-W}asserstein distance. 
        In case of Gaussians, there is an explicit solution~\cite{wibisono2018sampling}
        \begin{equation*}
            X_\tau \sim e^{-\tau \Sigma_*^{-1}}X_0 + \Sigma_*^{\nicefrac{1}{2}}\left(I - e^{-2\tau\Sigma_*^{-1}}\right)^{\nicefrac{1}{2}}Z,
        \end{equation*}
        where $Z \sim \cN(0, I_d)$.
        Thus, in case of $X_0 \sim \cN(0, \Sigma)$, the covariance after one step is given by
        \begin{equation}
            G(\Sigma) = e^{-\tau \Sigma_*^{-1}}\Sigma e^{-\tau \Sigma_*^{-1}} +
                \left(I - e^{-2\tau\Sigma_*^{-1}}\right)^{\nicefrac{1}{2}}\Sigma_*\left(I - e^{-2\tau\Sigma_*^{-1}}\right)^{\nicefrac{1}{2}}. \label{eq:OU_operator_explicit}
        \end{equation}
        For a symmetric positive definite $\Sigma$, the matrices $I,\ \Sigma,\ \Sigma^{-1}, \Sigma^{\nicefrac{1}{2}}$ and $e^{-2\tau \Sigma^{-1}}$ all have the same basis of eigenvectors thus, commute and it is easy to simplify \eqref{eq:OU_operator_explicit}, and verify $\Sigma_*$ is a fixed point.
        \subsubsection{Minimization of functionals}
        Alternatively, the operator can be defined having some vector field $F$ as  $G(\Sigma) = \rExp_\Sigma(-hF(\Sigma))$.
        If $F$ is a Wasserstein gradient of some functional, 
        \[
            F(\rho) = \partial_W \cE(\rho) := \nabla \frac{\delta}{\delta \rho}\cE(\rho)
        \]
        one can opt to minimize this functional with fixed-point iteration for the following operator
        \begin{equation}\label{eq:operator_gd}
            G(\rho) := \rExp_\rho\left(-h\partial_W \cE \right)
        \end{equation}
        This fixed-point iteration can be seen as a generalization of the Gradient Descent method to spaces with Riemannian-like geometry.
        Minimization over the BW space finds applications in Bayesian logistic regression~\cite{katsevich2024approximation,diao2023forward}, distributional robust optimization~\cite{nguyen2023bridging}, etc.
        
        \subsubsection{Kullback-Leibler divergence}
        As a model minimization problem, we consider the Kullback-Leibler divergence:
        \begin{equation}\label{eq:KL_WG}
            \partial_W \left(\operatorname{KL}(\cN(0, \Sigma)|\ \cN(0, \Sigma_*))\right)(x) = \nabla_x \left(\log\rho_\Sigma(x) - \rho_{\Sigma_*}(x) \right) = 
            (\Sigma^{-1} - \Sigma^{-1}_*)x.
        \end{equation}
        The operator can be written as
        \begin{equation}\label{eq:operator_dW_KL}
            G_h(\Sigma) = \rExp_\Sigma\left(-h\partial_W \operatorname{KL}(\Sigma|\Sigma_*) \right) = (\Id - h(\Sigma^{-1} - \Sigma^{-1}_*)) \Sigma (\Id - h(\Sigma^{-1} - \Sigma^{-1}_*)),
        \end{equation}
        where $h > 0$ is a step-size parameter that can be selected so that the iterative procedure converges.

        \begin{theorem}
            For a nondegenerate $\Sigma_*$, there exists a radius $r$ and stepsize parameters $\tau,\ h$, such that the operators, defined by~\eqref{eq:OU_operator_explicit},~\eqref{eq:operator_dW_KL} are contractive on $B_r(\Sigma_*)$
        \end{theorem}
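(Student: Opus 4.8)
The plan is to treat the two operators separately, since they are structurally different: the Ornstein--Uhlenbeck map \eqref{eq:OU_operator_explicit} is the \emph{exact} time-$\tau$ flow of a geodesically convex functional and is therefore globally contractive, whereas the Kullback--Leibler descent map \eqref{eq:operator_dW_KL} is only an explicit (forward Euler) discretization, for which I can only expect \emph{local} contraction obtained by linearizing at the fixed point. In both cases $\Sigma_*$ is already known to be the fixed point, so the task reduces to producing the contraction constant.

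For the Ornstein--Uhlenbeck operator I would argue as follows. The stationary measure $\cN(0,\Sigma_*)$ is proportional to $e^{-V}$ with $V(x)=\tfrac12\langle x,\Sigma_*^{-1}x\rangle$, so $\nabla^2 V=\Sigma_*^{-1}\succeq \tfrac{1}{\lambda_1^*}\Id$ and $V$ is $\lambda$-convex with $\lambda=1/\lambda_1^*>0$. Since $G=G_\tau$ is the Wasserstein gradient flow of $\operatorname{KL}(\cdot|\cN(0,\Sigma_*))$, the contraction estimate recalled in the introduction gives $W_2(G(\Sigma_1),G(\Sigma_2))\le e^{-\tau/\lambda_1^*}W_2(\Sigma_1,\Sigma_2)$ with $e^{-\tau/\lambda_1^*}<1$ for every $\tau>0$; hence contraction holds on all of $\BWspace$ and any $r$ is admissible. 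A self-contained alternative avoids the abstract theory: writing $A=e^{-\tau\Sigma_*^{-1}}$, the explicit solution shows that $G$ first applies the linear map $A$ and then convolves with a fixed Gaussian. As adding the same independent noise to both arguments does not increase $W_2$, and pushing forward by $A$ contracts $W_2$ by at most $\|A\|_{\mathrm{op}}=e^{-\tau/\lambda_1^*}$, the same constant follows.

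For the Kullback--Leibler descent map I would first reconfirm $G_h(\Sigma_*)=\Sigma_*$ and then compute the differential at $\Sigma_*$ in the exponential chart $V\mapsto\rExp_{\Sigma_*}(V)$. Writing $\Sigma=(\Id+V)\Sigma_*(\Id+V)$ and expanding to first order in $V$, a direct computation for the gradient-descent step yields $DG_h(\Sigma_*)=\Id-h\,\mathcal L$, where $\mathcal L V=\Sigma_*^{-1}V+V\Sigma_*^{-1}$. Two observations make this tractable: $\mathcal L$ is self-adjoint for the tangent metric $\langle\cdot,\cdot\rangle_{\Sigma_*}$ (a short cyclic-trace manipulation), and in an eigenbasis of $\Sigma_*$ its eigenvalues are $1/\lambda_i^*+1/\lambda_j^*$. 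Hence $DG_h(\Sigma_*)$ is self-adjoint with eigenvalues $1-h(1/\lambda_i^*+1/\lambda_j^*)$, so its operator norm in $\|\cdot\|_{\Sigma_*}$ is strictly below $1$ precisely when $0<h<\lambda_d^*$, which fixes the admissible stepsize.

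The remaining step, and the one I expect to be the main obstacle, is to upgrade infinitesimal contraction at $\Sigma_*$ to a genuine $W_2$-contraction on a ball $B_r(\Sigma_*)$ of positive radius. Since $G_h$ is smooth on $\operatorname{Sym}^{++}(d)$, continuity of the differential furnishes $\kappa_0<\kappa_1<1$ with $\|DG_h(\Sigma)\|\le\kappa_1$ near $\Sigma_*$; the difficulty is that $DG_h(\Sigma)$ maps between different tangent spaces, so integrating it along a geodesic joining two iterates requires controlling parallel transport and curvature. This is exactly where \autoref{them:super_main} is used: the uniform sectional curvature bound, the norm equivalences \eqref{eq:frob_tang}--\eqref{eq:w2_frobenius_bound}, and the local isometry \eqref{eq:exponential_injective} let me compare the Riemannian length of the image curve to $\|V\|_{\Sigma_*}$ with distortions that are $O(r)$. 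Shrinking $r$ so that these distortions do not overwhelm the gap $1-\kappa_1$ then yields a contraction constant $\kappa<1$ uniformly on $B_r(\Sigma_*)$, completing the proof.
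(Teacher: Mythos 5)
Your proposal is correct, but for the harder half (the explicit $\operatorname{KL}$ step \eqref{eq:operator_dW_KL}) it takes a genuinely different route from the paper. The paper never linearizes at the fixed point: its appendix proof runs a synchronous-coupling argument at the level of measures --- transport both arguments along the optimal plan $\gamma_{\text{opt}}$, expand $W_2^2(G_h(\rho_1),G_h(\rho_2))$ into a cross term and a quadratic term, bound the cross term by $\lambda$-convexity along generalized geodesics (with $\lambda=\nicefrac{1}{\lambda_1^*}$, inequality \eqref{eq:lambda_convex_fn}) and the quadratic term by a newly defined ``$L$-Lipschitz Wasserstein gradient'' property, which is then verified for $\operatorname{KL}(\cdot|\Sigma_*)$ on a ball by an explicit Neumann-series estimate of $\|\Sigma_1^{-1}-\Sigma_2^{-1}\|_F$ combined with the comparison \eqref{eq:w2_frobenius_bound}. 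This yields $W_2^2(G_h(\rho_1),G_h(\rho_2))\leq(1-\lambda h+h^2L^2)\,W_2^2(\rho_1,\rho_2)$ with fully explicit radius, constant $L$, and stepsize range $h\in(0,\lambda/L^2)$ --- the Wasserstein analogue of the Euclidean ``strongly convex $+$ smooth $\Rightarrow$ gradient descent contracts'' proof. Your route is instead spectral: $DG_h(\Sigma_*)=\Id-h\mathcal{L}$ with $\mathcal{L}V=\Sigma_*^{-1}V+V\Sigma_*^{-1}$, self-adjoint in $\langle\cdot,\cdot\rangle_{\Sigma_*}$ with eigenvalues $\nicefrac{1}{\lambda_i^*}+\nicefrac{1}{\lambda_j^*}$, followed by a perturbation/mean-value argument to upgrade $\|DG_h(\Sigma_*)\|<1$ to contraction on a ball. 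Your Jacobian computation is right (note it presupposes the descent sign; the signs printed in \eqref{eq:KL_WG}--\eqref{eq:operator_dW_KL} appear flipped, and the paper's own convexity step \eqref{eq:lambda_convex_fn} also silently uses the correct sign), and the globalization step you flag as the main obstacle does go through --- although what it actually requires is not the curvature bound of \autoref{them:super_main} but only that $(\cN_0^d,W_2)$ is a length space, continuity of $\Sigma\mapsto\|DG_h(\Sigma)\|$, and the observation that a minimizing geodesic between points of $B_r(\Sigma_*)$ stays in $B_{3r}(\Sigma_*)$; the $G_h$-image of such a geodesic is a curve whose length dominates the $W_2$ distance of its endpoints. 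The trade-off: your argument gives the sharp, dimension-free stepsize threshold $0<h<\lambda_d^*$ but no explicit radius or contraction factor, whereas the paper's coupling argument produces explicit (if very conservative) constants and a reusable notion --- the Lipschitz Wasserstein gradient --- that applies verbatim to other functionals of the form \eqref{eq:operator_gd_step}. For the Ornstein--Uhlenbeck operator both you and the paper invoke the same flow estimate \eqref{eq:KL_GF_contraction}; your pushforward-plus-convolution argument is an elementary alternative the paper does not contain, and it correctly identifies the global contraction factor $e^{-\tau/\lambda_1^*}$.
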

        \begin{proof}
            According to the theory in~\cite{ambrosio2005gradient}, the functional $\operatorname{KL}(\cdot | \cN(0, \Sigma_*))$ is $\nicefrac{1}{\lambda_1^*}$-convex along generalized geodesics.
            Thus, the following contraction estimate for the Ornstein-Uhlenbeck process~\eqref{eq:OU_operator_explicit} can be given:
            \begin{equation}\label{eq:KL_GF_contraction}
                W_2(G(\Sigma_0), G(\Sigma_1)) \leq e^{-\frac{\tau}{\lambda_1^*} } W_2 (\Sigma_0, \Sigma_1).
            \end{equation}

            The operator~\eqref{eq:operator_dW_KL} can be seen as a one-step explicit time discretization of the gradient flow~\eqref{eq:OU_process}.
            If $\Sigma = \Sigma^*$, then $\partial_W\operatorname{KL}(\cN(0, \Sigma)|\cN(0, \Sigma_*)) = 0$ and $\Sigma$ is a fixed-point.
            Thus, it is natural to expect that, for sufficiently small stepsize $h$, the discretization should capture the contractive property of the continuous process.
            The strict proof relies on establishing a property, analogous to the $L$-Lipschitz gradient property of a functional in $\mathbb{R}^d$, in the neighborhood of the fixed-point.
            It is quite technical, and thus is postponed to the Appendix~(\autoref{app:subsec_contraction}).
        \end{proof}

    \subsection{Complexity analysis}\label{subsec:complexity_analysis}
    For fixed-point problems in $\mathbb{R}^d$, the evaluation of $G(x_k)$ typically is the most computationally expensive operation.
    For example, if Anderson Mixing is used in the context of coupled PDE systems, each evaluation of the operator requires a numerical solution of complicated PDE(s), with parameters defined by the current iterate $x_k$~(see \cite{fang2009two,aksenov2021application,lee2022convergence} for more examples). 
    The method's efficiency is evaluated by comparing the amount of the number of the operator calls with Picard method.
    The overhead, induced by the acceleration algorithm can be neglected in this case.
    Indeed, the additional operations consist of finding the mixing coefficients via~\eqref{eq:linear_AA_minimization} and then computing the new iterate by formula~\eqref{eq:linear_AA_update}.
    The storage of historical vectors requires $\cO(m d)$ additional memory.
    The assembly of the matrices, needed to solve~\eqref{eq:linear_AA_minimization} can be estimated as $\cO( m^2d)$ operations.
    Additional $\cO(m^3)$ operations are required to solve the minimization problem.
    Since $m$ is a small number in the range $1\dots 15$, and the complexity of the operator evaluation is expected to scale worse than linear, the neglection of the overhead is justified from the computational complexity perspective, especially for large-scale problems ($d \gg m$).

    If the Gaussian distributions in $\BWspace{}$ are considered, the total number of the degrees of freedom, needed to represent the points and tangent vectors, is $\cO(d^2)$.
    Thus, the storage of the matrices $X_k,\ R_k$ requires $\cO(m d^2)$ memory.
    In addition to the aforementioned steps, Riemannian Anderson Mixing performs the vector transport.
    In case when identity map $\Id_{\Sigma_1}^{\Sigma_2}$ is used, there is no computation at all.
    For the one-step approximation $\cT_{\Sigma_1}^{\Sigma_2}$, a computation of the OT map, and $m$ matrix multiplications and solutions of~\eqref{eq:BW_projection} are needed, thus the total number of iterations can estimated as $\cO(m d^3)$.
    The computation of the matrices in~\eqref{eq:l_inf_subproblem} requires $\cO(m^2 d^3)$, and the update using Riemannian exponential \eqref{eq:rExp_gaussian} costs $\cO(d^3)$.
        
    For example, in the averaging problems, that we consider in the sequel, the computation of each operator requires computing the optimal transport map between the current distribution and $N_\sigma$ other ones.
    This can be efficiently implemented in $\cO(N_\sigma d^3)$ operations using the eigenvalue decomposition. 
    In applications, $N_\sigma$ can be as high as  $10^3$~\cite{chewi2020gradient}. 
    Thus, in our examples the operator computation complexity has the same asymptotic dependence on $d$,  as the overhead introduced by RAM.
    Nevertheless, we argue that it is possible to achieve overall acceleration when the operator is sufficiently ``heavy'', e.g. when $N_\sigma$ is high.
      
   \section{Numerical experiments}\label{sect:numeric}
   The following section provides the implementation details and numeric results.
   The experiments were performed on a laptop with 13th Gen Intel(R) Core(TM) i7-1355U CPU with 12 cores.
   The execution time varied from fractions of second in low-dimensional cases up to $\sim 100\ s$ for $500$ dimensions.
   \subsection{Implementation details }
        Since the explicit form of the exponential is given in \autoref{eq:rExp_gaussian}, we can directly use it as the retraction mapping.

        To adhere to~\autoref{asmp:bounded_least_square}, in the minimization subproblem we impose a bound $l_{\infty,max}$ for the weights $\Gamma_k$, 
            and solve (using \texttt{cvxpy} package~\cite{diamond2016cvxpy}) the regularized problem
        \begin{equation}
            \Gamma_k = 
                \underset{|\Gamma|_\infty \leq l_{\infty,max}}{\arg\min} 
                    \left\|r_k - R_k \Gamma \right\|. 
                    \label{eq:l_inf_subproblem}
        \end{equation}

        \subsection{Ornstein-Uhlenbeck process}

        In the first set of experiments, we accelerate the Picard iteration for the operator~\eqref{eq:OU_operator_explicit}, defined by the Ornstein-Uhlenbeck process~\eqref{eq:OU_process}.
        We vary $d,\ \sigma_\textrm{max}$ and for each pair, we run the Picard iteration and Anderson acceleration with different lengths of history $m$, until the residual norm is smaller than $\epsilon = 10^{-6}$.
        The acceleration measured as $\frac{N_\text{Picard}}{N_\text{AA}}$ is averaged over $n = 6$ independent covariances.
        The maximal mean acceleration, depending on $d$ and $\sigma_\textrm{max}$, is presented in \autoref{tab:OU_acceleration}.
        The improved convergence of BWRAM for the Ornstein-Uhlenbeck problem, in comparison to Picard, RGD and RCG methods, is presented in \autoref{fig:OU_conv_final}.
      \begin{table}[htbp]
        \caption{Maximal mean acceleration for BWRAM, depending on $d$ and $\sigma_{\mathrm{max}}$.}
        \centering
          \begin{minipage}[t]{0.48\textwidth}
                \centering
              \begin{tabular}{lrrrr}
\toprule
$\sigma_{max}$ & $1.0$ & $5.0$ & $10.0$ & $20.0$ \\
$d$ &  &  &  &  \\
\midrule
4 & $1.62$ & $5.89$ & $8.29$ & $14.16$ \\
8 & $1.73$ & $3.68$ & $4.23$ & $5.76$ \\
16 & $1.73$ & $4.00$ & $4.76$ & $5.94$ \\
32 & $1.73$ & $4.12$ & $4.86$ & $5.41$ \\
64 & $1.75$ & $4.33$ & $5.60$ & $6.67$ \\
128 & $1.87$ & $4.60$ & $4.78$ & $5.87$ \\
\bottomrule
\end{tabular}

              \subcaption{Ornstein-Uhlenbeck dynamic}
                \label{tab:OU_acceleration}
          \end{minipage}
          \medskip
          \hfill
          \begin{minipage}[t]{0.48\textwidth}
            \centering
              \begin{tabular}{lrrr}
\toprule
$\sigma_{max}$ & $5.0$ & $10.0$ & $20.0$ \\
$d$ &  &  &  \\
\midrule
4 & $4.09$ & $6.33$ & $10.77$ \\
8 & $3.21$ & $4.22$ & $5.12$ \\
16 & $3.15$ & $4.02$ & $4.89$ \\
32 & $3.13$ & $4.39$ & $4.84$ \\
64 & $3.10$ & $4.61$ & $5.01$ \\
128 & $3.00$ & $4.54$ & $5.20$ \\
\bottomrule
\end{tabular}

              \subcaption{$\operatorname{KL}$ minimization}
            \label{tab:KL_acceleration}
          \end{minipage}
      \end{table}
        \subsection{Minimization of $\operatorname{KL}$ divergence}\label{sect:KL_min}
        
        We can compare the performance of the BWRAM to minimization methods, in particular, Riemannian Gradient Descent (RGD) and Riemannian Conjugate Gradient (RCG)~\cite{absil2009optimization}.
        The numerical implementation of these methods from the package \texttt{pymanopt}~\cite{townsend2016pymanopt} has been used.
        
        The values of $\Sigma_*$ were chosen in the same way, as for the experiments with the Ornstein-Uhlenbeck dynamic.
        The scaling parameter $h = 0.3$ was chosen so that every methods converges.
        Each method is iterated until the Kullback-Leibler divergence is reduced below $\epsilon = 10^{-10}$.
        The acceleration is averaged for different realizations of $\Sigma_*$.
        The best mean acceleration among different history lengths, depending on $d$ and $\sigma_{\max}$, is presented in \autoref{tab:KL_acceleration}.
        The improved convergence of BWRAM for the $\operatorname{KL}$ minimization problem, in comparison to Picard, RGD and RCG methods, is presented in \autoref{fig:KL_conv_final}.
        \begin{figure}[htbp]
            \centering
            \begin{subfigure}[b]{0.49\textwidth}
                \centering
                \includegraphics[width=\textwidth]{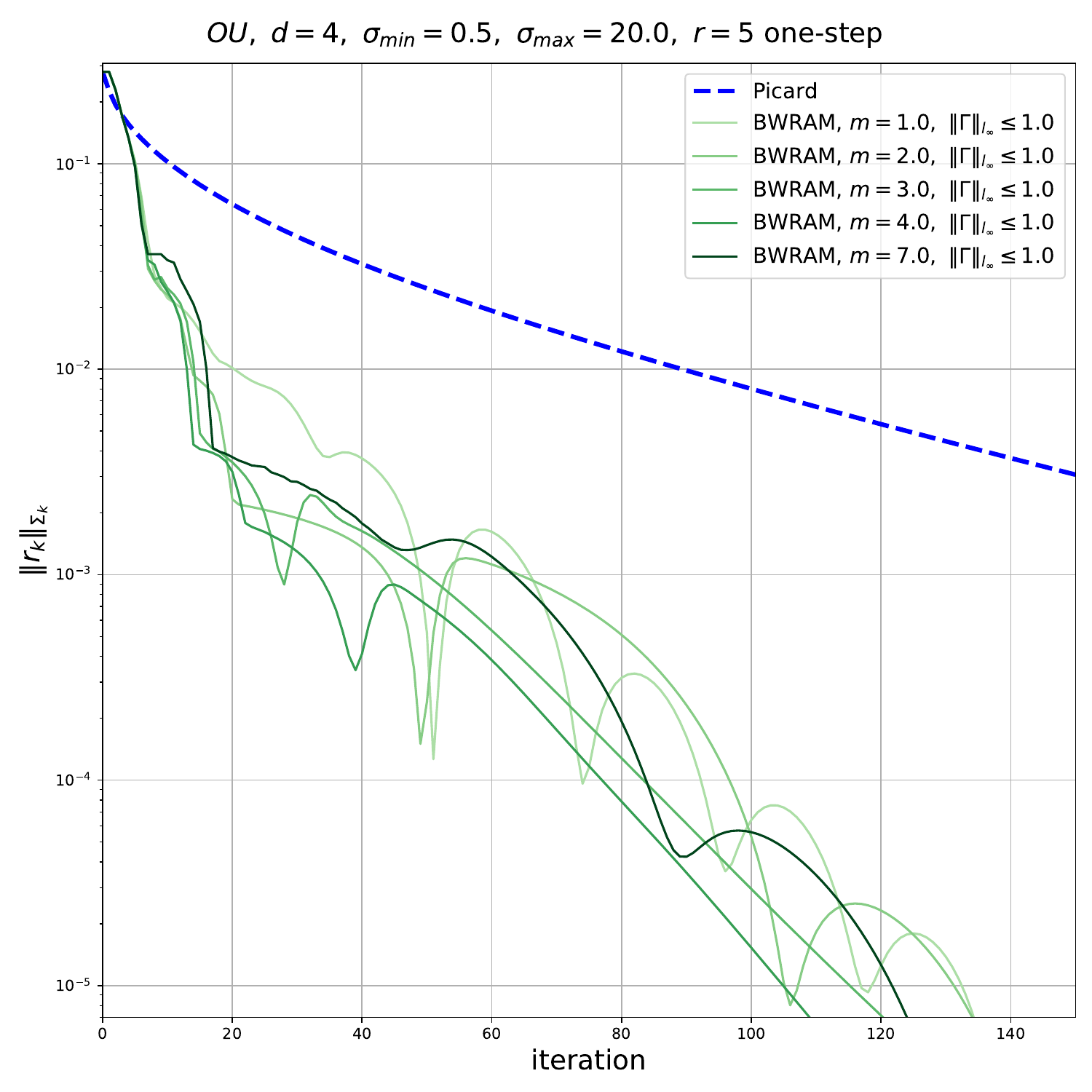}
                \caption{Residual for Ornstein-Uhlenbeck}
                \label{fig:OU_conv_final}
            \end{subfigure}
            \hfill
            \begin{subfigure}[b]{0.49\textwidth}
                \centering
                \includegraphics[width=\textwidth]{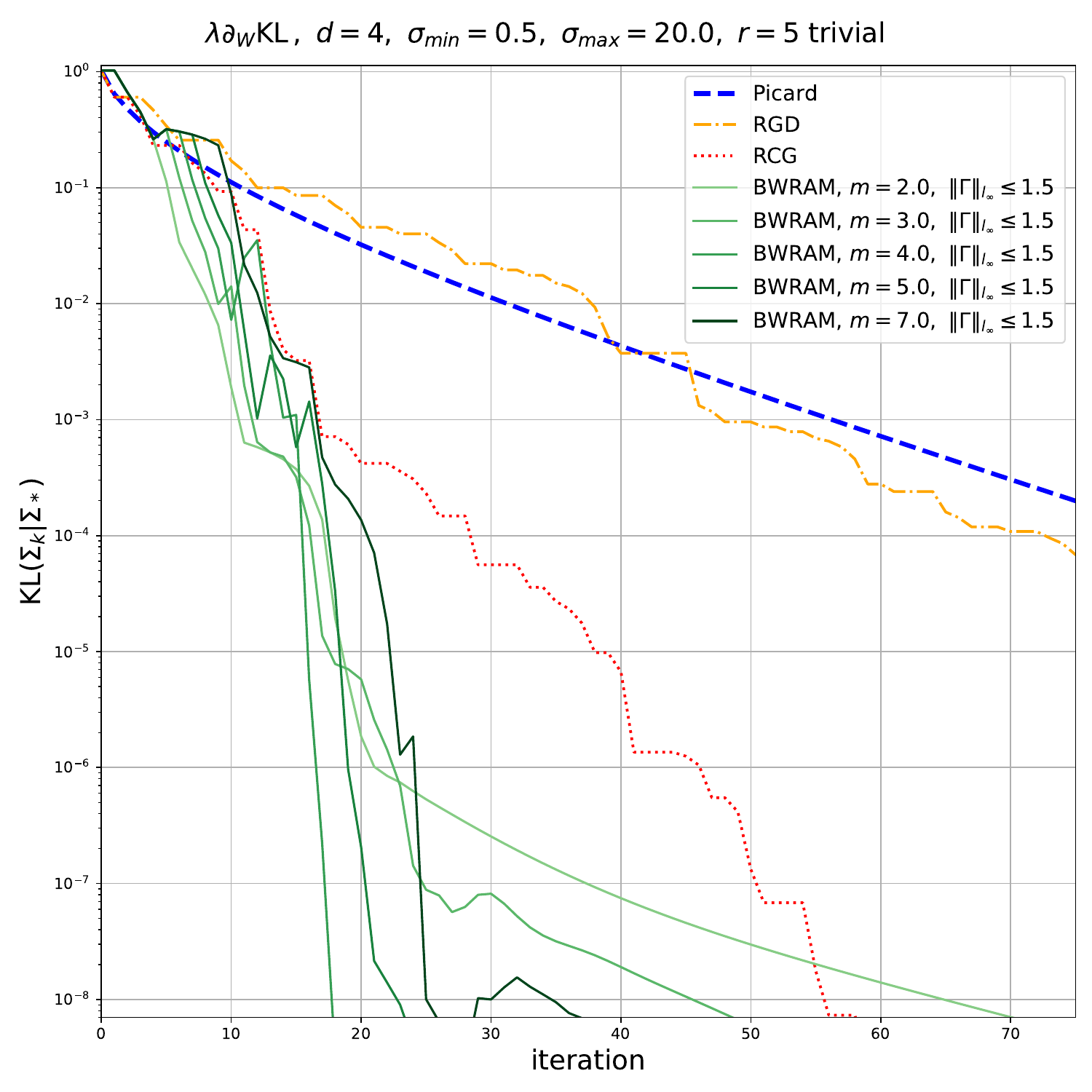}
                \caption{Cost in the $\operatorname{KL}$ problem}
                \label{fig:KL_conv_final}
            \end{subfigure}
            \caption{Convergence of BWRAM on model problems, in comparison to Picard (blue), RGD (orange) and RCG (red) methods }
            \label{fig:Model_conv}
        \end{figure}%
        \subsection{Averaging probability distributions}
        Averaging data sources is an important problem in machine learning (see, for example, \cite{altschuler2021averaging} and references therein).  
        In case of averaging probability distributions, a common approach is to consider a Wasserstein barycenter, introduced in~\cite{agueh2011barycenters}. 
        Given $k$ distributions $\{\rho_i \in \Wspace \}^k_{i=1}$ and weights $\{\alpha_i > 0\ |\ \alpha_1 + \dots + \alpha_k = 1\}$, the Wasserstein barycenter is a minimizer $\min_{\rho \in \Wspace}B(\rho)$  of
        \begin{equation}\label{eq:Barycenter_cost}
            B(\rho) := \sum\limits_{i=1}^{n_\sigma} \alpha_i W^2_2(\rho, \rho_i).
        \end{equation}
        In case of absolutely continuous $\rho$, the Wasserstein gradient of the cost function  is given by~\cite{chewi2020gradient}
        \begin{equation}
            \partial_W B(\rho) = \sum\limits_{i=1}^{n_\sigma}
                \alpha_i (T_{\rho}^{\rho_i} - \operatorname{Id})
                \label{eq:Barycenter_gradient},
        \end{equation}
        where $T_{\rho}^{\rho_i}$ is the optimal transport map from $\rho$ to $\rho_i$.
        The exponential of this gradient defines a fixed-point problem, as noted in~\cite{alvarez2016fixed}.
        The authors also show that if all $\rho_i$ are absolutely continuous and at least one of them has bounded density, then the barycenter is a fixed-point of this mapping. 
        In case that every $\rho_i$ is a Gaussian, the solution is unique.

        Alternative approaches include the \emph{entropy-regularized Wasserstein barycenters} and \emph{Geometric Medians}, which are, respectively, the minimizers of
        \begin{gather}
            B_\gamma(\rho) := \sum\limits_{i=1}^{n_\sigma} \alpha_i W^2_2(\rho, \rho_i) + \gamma \operatorname{KL}(\rho| \cN(0, I_d)) \\
            M(\Sigma) = \sum_{i=0}^{n_\sigma}\alpha_i W_2(\Sigma, \Sigma_i).
        \end{gather}  
        The entropic penalty in the first case can be seen as a way to incorporate prior knowledge in the estimation.
        The geometric median is known to be more robust with respect to the perturbations of the distributions $\rho_k$, but poses additional difficulty to estimate. 
        In particular, $M(\rho)$ 
        is neither geodesically convex nor geodesically smooth.
        Given some smoothing parameter $\varepsilon>0$, we work on a smoothed objective as suggested in~\cite{altschuler2021averaging}, namely
        \begin{equation}
            M_\varepsilon(\rho) = \sum\limits_{i=1}^{n_\sigma} \alpha_i \sqrt{W^2_2(\rho, \rho_i) + \varepsilon^2} .\label{eq:Median_cost}
        \end{equation}

        Constraining to the Bures-Wasserstein manifold $\cN_0^d$, the Wasserstein gradients of these objectives are
        \begin{gather}
            \partial_W B_\gamma(\Sigma) 
                = \sum\limits_{i=1}^{n_\sigma}\alpha_i (T_{\Sigma}^{\Sigma_i} - \operatorname{Id}) + \gamma\left(\Sigma^{-1} - \Id \right),
             \label{eq:EntBC_gradient} \\
            \partial_W M_\varepsilon(\Sigma) 
                = \sum\limits_{i=1}^{n_\sigma}\frac{\alpha_i}{\sqrt{W_2(\Sigma,\Sigma_i)^2 + \varepsilon^2}}\left( T_{\Sigma}^{\Sigma_i} - \operatorname{Id}\right).
            \label{eq:Median_nonregularized_gradient}
        \end{gather}

        The barycenter problem has been computed with Picard iteration~\cite{alvarez2016fixed} or Riemannian gradient descent~\cite{chewi2020gradient}, which coincide if the gradient step size equals one. 
        Stochastic gradient descent has also been considered~\cite{chewi2020gradient,altschuler2021averaging}, with approximation of the sum in~\autoref{eq:Barycenter_gradient} by minibatching.
        For entropic barycenter and median problems, the same methods have been applied e.g. in~\cite{altschuler2021averaging}.
        To the best of the authors' knowledge, this is a first attempt to apply an accelerated method to such problems.

        \begin{figure}[htbp]
            \centering
            \begin{subfigure}[b]{0.32\textwidth}
                \centering
                \includegraphics[width=\textwidth]{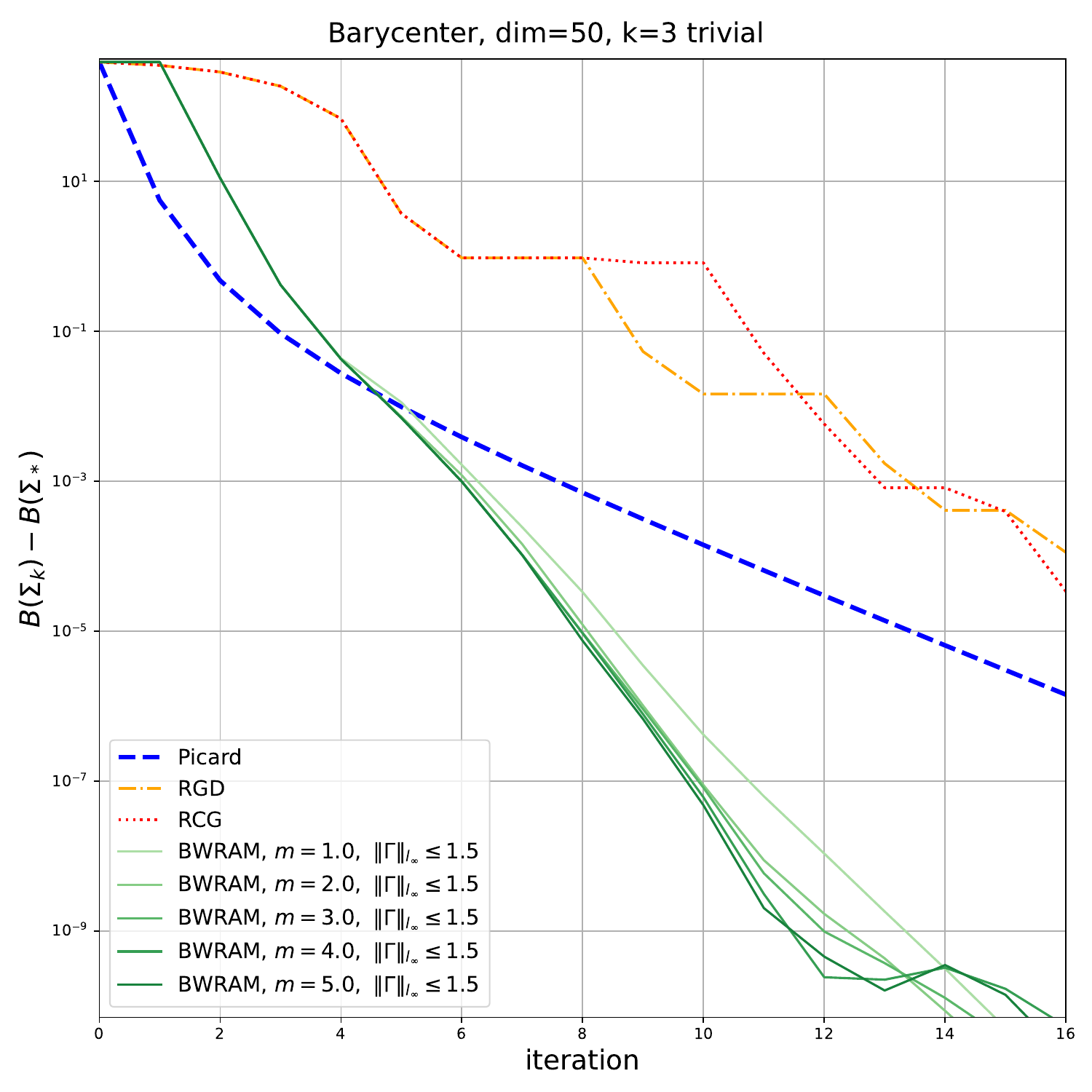}
                \caption{Barycenter}
                \label{fig:BC_conv_final}
            \end{subfigure}
            \begin{subfigure}[b]{0.32\textwidth}
                \centering
                \includegraphics[width=\textwidth]{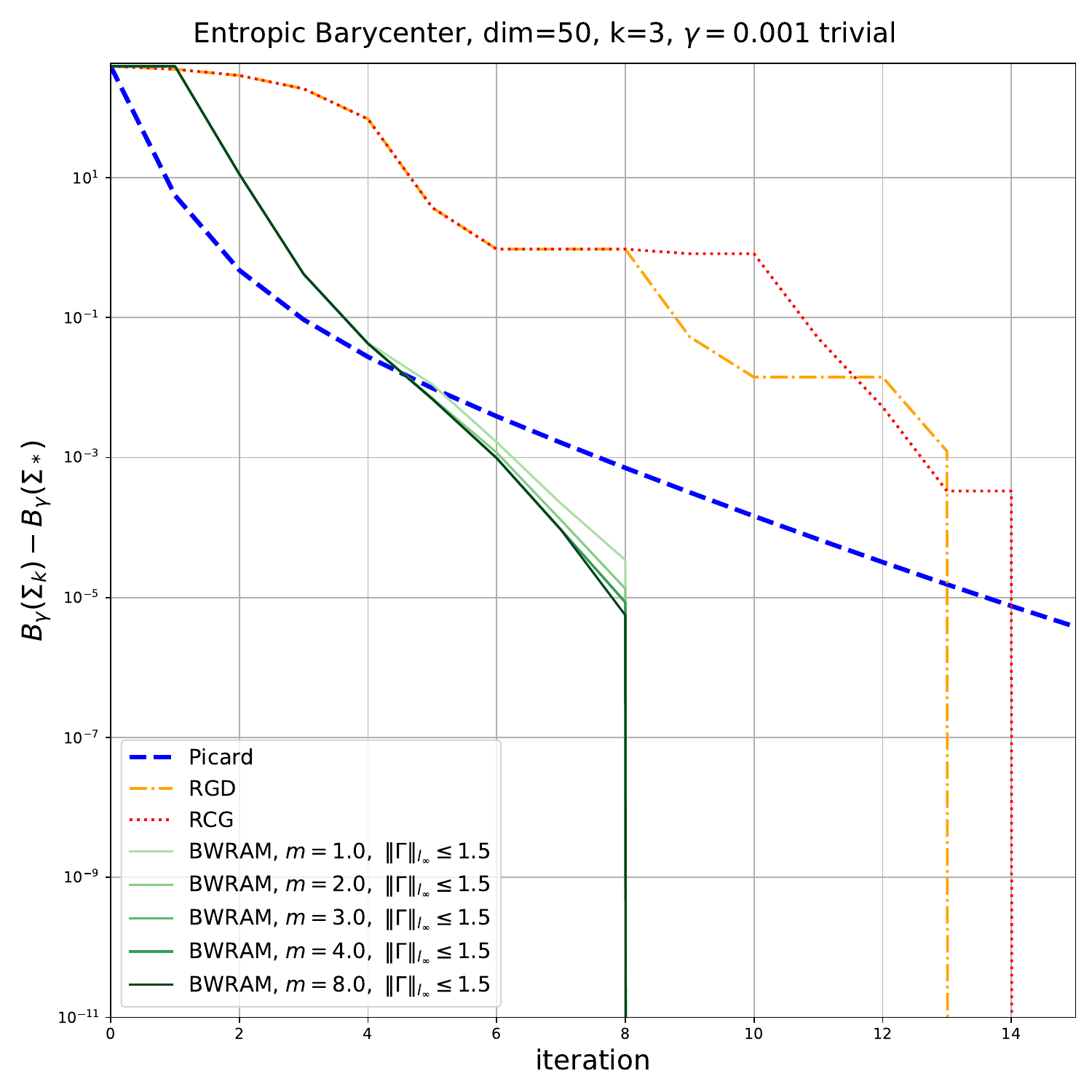}
                \caption{Entropic Barycenter}
                \label{fig:EntBC_convergece_final}
            \end{subfigure}
            \begin{subfigure}[b]{0.32\textwidth}
                \centering
                \includegraphics[width=\textwidth]{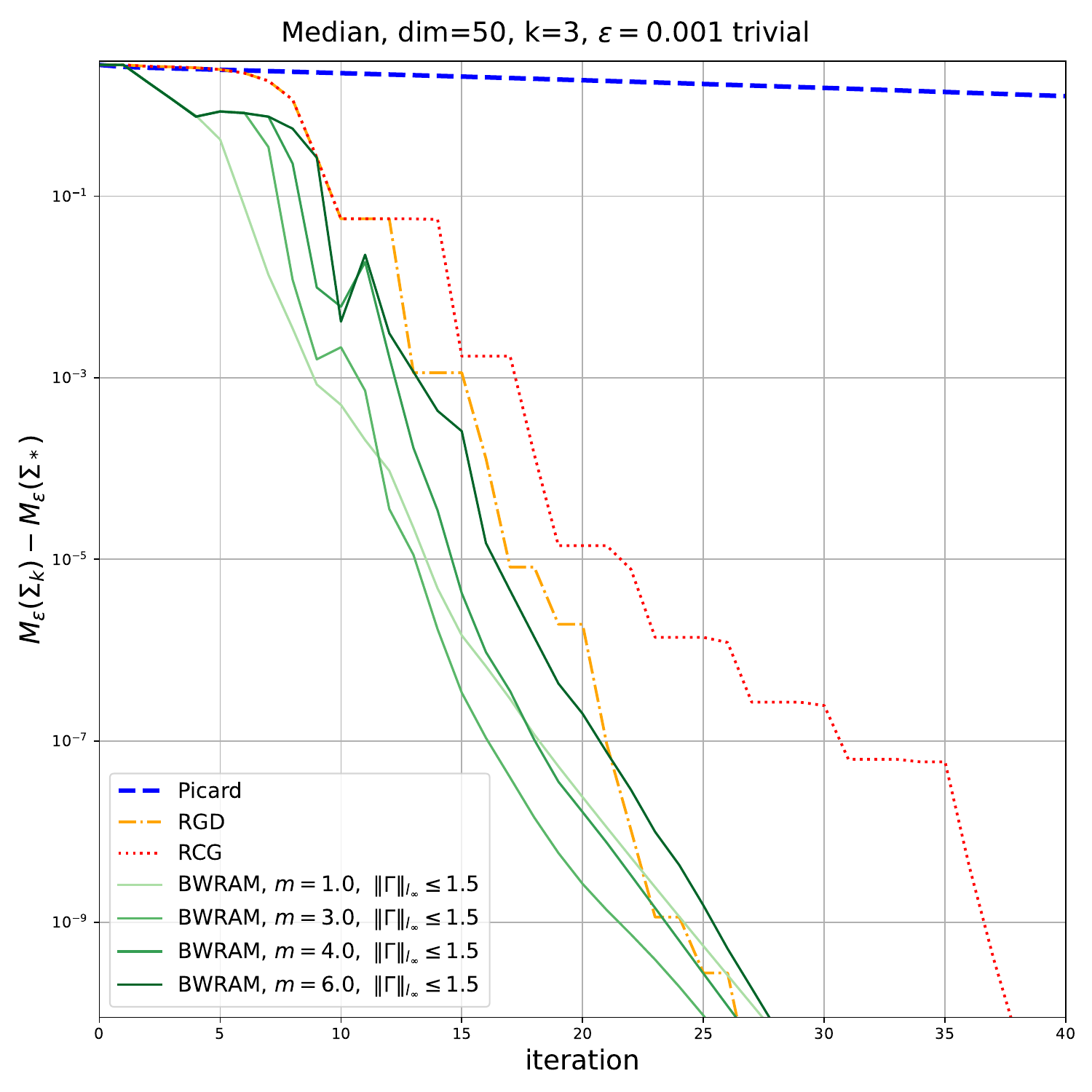}
                \caption{Median}
                \label{fig:Median_conv_final}
            \end{subfigure}
            \caption{Convergence of the cost. BWRAM (greens) with different history length on the averaging problems, in comparison to Picard (blue), RGD (orange) and RCG (red) methods}
            \label{fig:Avg_convergence}
        \end{figure}

        In \autoref{fig:Avg_convergence} one can see that BWRAM outperforms Picard, RGD and RCG for the averaging problems. 
        Details on additional experiments for varying $d, n_{\sigma}$ can be found in the appendix.

        \FloatBarrier
        \subsection{Comparison of the vector transport mappings}
        As noted previously in \autoref{subsec:complexity_analysis}, there are several options for the parallel transport mapping.
        \begin{figure}
            \centering
            \includegraphics[width=1.0\linewidth]{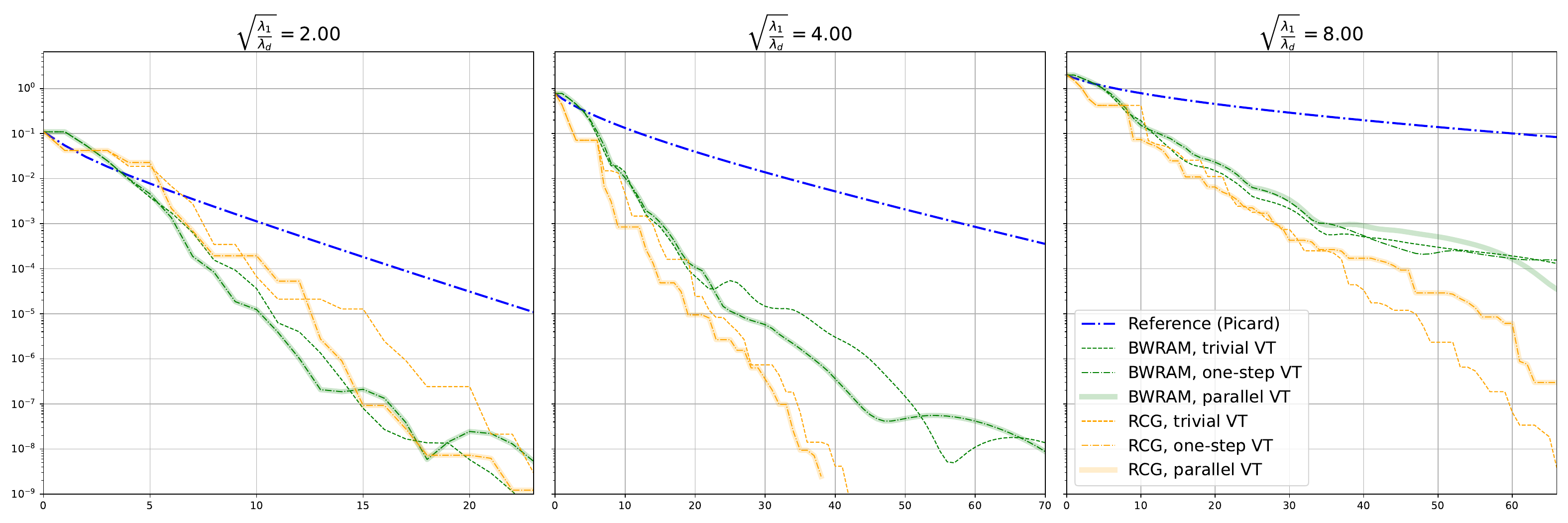}
            \caption{Performance of methods on the $\operatorname{KL}$ problem with varying condition number $\sqrt{\frac{\lambda^*_1}{\lambda^*_d}}$. Exact parallel transport, one-step and trivial approximation.}
            \label{fig:vt_comparison}
        \end{figure}
        The behavior of accelerated methods (BWRAM and RCG) with different kinds of vector transport is represented on~\autoref{fig:vt_comparison}.
        The computation of the exact parallel transport requires a solution of a nonlinear ODE on the manifold.
        This iterative procedure could be comparable in numeric complexity to the solution of the fixed-point problem itself. 
        Moreover, in some cases (e.g. the first two illustration), the resulting trajectory of the method is identical to the one with a one-step approximation.
        These considerations rule out the usage of the exact parallel transport in the practical applications of the method.

        The usage of the trivial transport map does not incur any additional computational complexity, in contrast to the one-step approximation, which can increase the per-iteration complexity.
        On the other hand, in case of trivial transport, the constant $M$ from~\eqref{asmp:vt} depends on the maximal and minimal eigenvalue ratio in the vicinity of the solution, and can be larger than one. 
        This leads to a decrease of the theoretically guaranteed radius of the ball of converging solutions, and to an increase of the constants, hidden in the $\mathcal{O}$ terms of~\autoref{them:Main}.
        Thus, when choosing between one-step and trivial transport map, there exists a tradeoff between the computation efficiency of each iteration and reduced stability of the method and potentially worse acceleration.
        To assess the effect of this tradeoff in the numerical experiments, we perform a robustness study on a subset of our test cases.
        The results are presented in the~\autoref{tab:bwram_vt_comparison}.
        \begin{table}[ht]
            \centering
            \caption{Robustness of the performance with respect to different vector transport mappings. $\operatorname{KL}$ problem with $\sigma_{\text{max}} = 20,\ \sigma_\text{min} = 0.5$ and varying dimension.}
            \begin{tabular}{lrrrr}
\toprule
 & \multicolumn{2}{r}{$\operatorname{Id}_{\Sigma_1}^{\Sigma_2}$} & \multicolumn{2}{r}{$\mathcal{T}_{\Sigma_1}^{\Sigma_2}$} \\
 & $\%$ converged & $\%$ accelerated & $\%$ converged & $\%$ accelerated \\
$d$ &  &  &  &  \\
\midrule
4 & 60.0 & 53.3 & 98.8 & 64.4 \\
8 & 68.0 & 54.0 & 97.3 & 48.4 \\
16 & 63.3 & 53.3 & 94.1 & 53.3 \\
32 & 75.8 & 66.4 & 94.7 & 59.7 \\
64 & 84.4 & 74.7 & 94.9 & 65.3 \\
128 & 74.0 & 66.0 & 96.4 & 66.5 \\
\bottomrule
\end{tabular}

            \medskip
            \label{tab:bwram_vt_comparison}
        \end{table}
        We consider the $\operatorname{KL}$ minimization problem with fixed maximal and minimal covariance, and vary the dimension.
        The Anderson method is repeated with a set of varying parameters over several random initializations of the problem  as described in~\autoref{sect:KL_min}.
        In this set of test runs, we compute the percentage of runs where the method converged to the prescribed tolerance, and where at least double acceleration was achieved (in comparison with the Picard method).
        The outcomes show that the usage of trivial transport mapping does negatively affect the convergence.
        We notice, however, that this is due to divergence of the method on the early stages, which can, in our opinion, be effectively dealt with by incorporating a restarting or early stopping strategy.
        The probability of acceleration is, however, not changed significantly.
        We have also performed experiments, evaluating the maximal mean acceleration and best number of iterations, as in the previous sections, with the both mentioned types of vector transport.
        The results were qualitatively the same, although the particular best values of the hyperparameters do not have to coincide.
        We thus argue that usage of the trivial vector transport is justified, and claim that BWRAM does improve the iteration complexity with a negligible increase of the per-iteration cost.

\section{Conclusion and outlook}
    The numerical evidence suggests that Anderson mixing performs comparably to established Riemannian minimization methods, while being applicable to a broader set of problems outperforming Picard iteration.
    The theoretical analysis certifies the local convergence of the method and highlights the geometric structure of the Bures-Wasserstein space. 
    It also outlines the difficulties that have to be overcome in the future work on generalizing the method to arbitrary distributions.

\section*{Acknowledgement}
VA acknowledges the support of the EMPIR project 20IND04-ATMOC. This project (20IND04 ATMOC) has received funding from the EMPIR programme co-financed by the Participating States and from the European Union’s Horizon 2020 research and innovation programme. ME was supported by ANR-DFG project “COFNET” and DFG SPP 2298
“Theoretical Foundations of Deep Learning”.

MO has received funding by the Deutsche Forschungsgemeinschaft (DFG, German Research Foundation) -- project number 442047500 -- through the Collaborative Research Center ``Sparsity and Singular Structures'' (SFB 1481). 
\section*{Conflict of Interest Statement}
The authors have no conflicts of interest to declare related to this publication.
    \FloatBarrier
\bibliographystyle{unsrt}
\bibliography{references}

@book{ambrosio2005gradient,
  title={Gradient flows: in metric spaces and in the space of probability measures},
  author={Ambrosio, Luigi and Gigli, Nicola and Savar{\'e}, Giuseppe},
  year={2005},
  publisher={Springer Science \& Business Media}
}

@article{aksenov2021application,
  title={Application of accelerated fixed-point algorithms to hydrodynamic well-fracture coupling},
  author={Aksenov, Vitalii and Chertov, Maxim and Sinkov, Konstantin},
  journal={Computers and Geotechnics},
  volume={129},
  pages={103783},
  year={2021},
  publisher={Elsevier}
}

@article{toth2015convergence,
  title={Convergence analysis for {A}nderson acceleration},
  author={Toth, Alex and Kelley, Carl T},
  journal={SIAM Journal on Numerical Analysis},
  volume={53},
  number={2},
  pages={805--819},
  year={2015},
  publisher={SIAM}
}

@article{fang2009two,
  title={Two classes of multisecant methods for nonlinear acceleration},
  author={Fang, Haw-ren and Saad, Yousef},
  journal={Numerical linear algebra with applications},
  volume={16},
  number={3},
  pages={197--221},
  year={2009},
  publisher={Wiley Online Library}
}

@article{walker2011anderson,
  title={Anderson acceleration for fixed-point iterations},
  author={Walker, Homer F and Ni, Peng},
  journal={SIAM Journal on Numerical Analysis},
  volume={49},
  number={4},
  pages={1715--1735},
  year={2011},
  publisher={SIAM}
}

@article{li2023riemannian,
  title={Riemannian {A}nderson Mixing Methods for Minimizing $ {C}^{2}$-Functions on {R}iemannian Manifolds},
  author={Li, Zanyu and Bao, Chenglong},
  journal={arXiv preprint arXiv:2309.04091},
  year={2023}
}

@article{gigli2008geometry,
  title={On the geometry of the space of probability measures in $\mathbb{R}^n$ endowed with the quadratic optimal transport distance},
  author={Gigli, Nicola},
  year={2008},
  publisher={Scuola Normale Superiore}
}

@article{ambrosio2008construction,
  title={Construction of the parallel transport in the {W}asserstein space},
  author={Ambrosio, Luigi and Gigli, Nicola},
  year={2008}
}

@article{liu2016stein,
  title={Stein variational gradient descent: A general purpose {B}ayesian inference algorithm},
  author={Liu, Qiang and Wang, Dilin},
  journal={Advances in neural information processing systems},
  volume={29},
  year={2016}
}

@article{duncan2023geometry,
  title={On the geometry of {S}tein variational gradient descent},
  author={Duncan, Andrew and N{\"u}sken, Nikolas and Szpruch, Lukasz},
  journal={Journal of Machine Learning Research},
  volume={24},
  number={56},
  pages={1--39},
  year={2023}
}

@article{liu2018accelerated,
  title={Accelerated first-order methods on the {W}asserstein space for {B}ayesian inference},
  author={Liu, Chang and Zhuo, Jingwei and Cheng, Pengyu and Zhang, Ruiyi and Zhu, Jun and Carin, Lawrence},
  journal={stat},
  volume={1050},
  pages={4},
  year={2018}
}

@inproceedings{wei2022class,
  title={A class of short-term recurrence {A}nderson mixing methods and their applications},
  author={Wei, Fuchao and Bao, Chenglong and Liu, Yang},
  booktitle={International Conference on Learning Representations},
  year={2022}
}

@article{takatsu2010wasserstein,
  title={On Wasserstein geometry of {G}aussian measures},
  author={Takatsu, Asuka},
  journal={Probabilistic approach to geometry},
  volume={57},
  pages={463--472},
  year={2010},
  publisher={Mathematical Society of Japan}
}

@article{malago2018wasserstein,
  title={Wasserstein Riemannian geometry of {G}aussian densities},
  author={Malag{\`o}, Luigi and Montrucchio, Luigi and Pistone, Giovanni},
  journal={Information Geometry},
  volume={1},
  pages={137--179},
  year={2018},
  publisher={Springer}
}

@inproceedings{wibisono2018sampling,
  title={Sampling as optimization in the space of measures: The {L}angevin dynamics as a composite optimization problem},
  author={Wibisono, Andre},
  booktitle={Conference on Learning Theory},
  pages={2093--3027},
  year={2018},
  organization={PMLR}
}

@article{takatsu2011wasserstein,
  title={Wasserstein geometry of {G}aussian measures},
  author={Takatsu, Asuka},
  year={2011}
}

@article{townsend2016pymanopt,
  author  = {James Townsend and Niklas Koep and Sebastian Weichwald},
  title   = {Pymanopt: A Python Toolbox for Optimization on Manifolds using Automatic Differentiation},
  journal = {Journal of Machine Learning Research},
  year    = {2016},
  volume  = {17},
  number  = {137},
  pages   = {1--5},
  url     = {http://jmlr.org/papers/v17/16-177.html}
}

@incollection{absil2009optimization,
  title={Optimization algorithms on matrix manifolds},
  author={Absil, P-A and Mahony, Robert and Sepulchre, Rodolphe},
  booktitle={Optimization Algorithms on Matrix Manifolds},
  year={2009},
  publisher={Princeton University Press}
}

@article{agueh2011barycenters,
  title={Barycenters in the {W}asserstein space},
  author={Agueh, Martial and Carlier, Guillaume},
  journal={SIAM Journal on Mathematical Analysis},
  volume={43},
  number={2},
  pages={904--924},
  year={2011},
  publisher={SIAM}
}

@inproceedings{chewi2020gradient,
  title={Gradient descent algorithms for {B}ures-{W}asserstein barycenters},
  author={Chewi, Sinho and Maunu, Tyler and Rigollet, Philippe and Stromme, Austin J},
  booktitle={Conference on Learning Theory},
  pages={1276--1304},
  year={2020},
  organization={PMLR}
}

@article{alvarez2016fixed,
  title={A fixed-point approach to barycenters in {W}asserstein space},
  author={{\'A}lvarez-Esteban, Pedro C and Del Barrio, E and Cuesta-Albertos, JA and Matr{\'a}n, C},
  journal={Journal of Mathematical Analysis and Applications},
  volume={441},
  number={2},
  pages={744--762},
  year={2016},
  publisher={Elsevier}
}

@article{altschuler2021averaging,
  title={Averaging on the {B}ures-{W}asserstein manifold: dimension-free convergence of gradient descent},
  author={Altschuler, Jason and Chewi, Sinho and Gerber, Patrik R and Stromme, Austin},
  journal={Advances in Neural Information Processing Systems},
  volume={34},
  pages={22132--22145},
  year={2021}
}

@article{hastings1970monte,
    author = {Hastings, W. K.},
    title = {Monte {C}arlo sampling methods using {M}arkov chains and their applications},
    journal = {Biometrika},
    volume = {57},
    number = {1},
    pages = {97-109},
    year = {1970},
    month = {04},
    abstract = {A generalization of the sampling method introduced by Metropolis et al. (1953) is presented along with an exposition of the relevant theory, techniques of application and methods and difficulties of assessing the error in Monte Carlo estimates. Examples of the methods, including the generation of random orthogonal matrices and potential applications of the methods to numerical problems arising in statistics, are discussed.},
    issn = {0006-3444},
    doi = {10.1093/biomet/57.1.97},
    url = {https://doi.org/10.1093/biomet/57.1.97},
    eprint = {https://academic.oup.com/biomet/article-pdf/57/1/97/23940249/57-1-97.pdf},
}

@article{roberts1996exponential,
author = {Gareth O. Roberts and Richard L. Tweedie},
title = {{Exponential convergence of {L}angevin distributions and their discrete approximations}},
volume = {2},
journal = {Bernoulli},
number = {4},
publisher = {Bernoulli Society for Mathematical Statistics and Probability},
pages = {341 -- 363},
keywords = {Diffusions, discrete approximations, geometric ergodicity, Hastings algorithms, irreducible Markov processes, Langevin models, Markov chain Monte Carlo, Metropolis algorithms, posterior distributions},
year = {1996},
}

@article{kent2021modified,
  title={Modified {F}rank {W}olfe in probability space},
  author={Kent, Carson and Li, Jiajin and Blanchet, Jose and Glynn, Peter W},
  journal={Advances in Neural Information Processing Systems},
  volume={34},
  pages={14448--14462},
  year={2021}
}

@article{chen2025accelerating,
  title={Accelerating optimization over the space of probability measures},
  author={Chen, Shi and Li, Qin and Tse, Oliver and Wright, Stephen J},
  journal={Journal of Machine Learning Research},
  volume={26},
  number={31},
  pages={1--40},
  year={2025}
}

@article{han2021riemannian,
  title={On Riemannian optimization over positive definite matrices with the {B}ures-{W}asserstein geometry},
  author={Han, Andi and Mishra, Bamdev and Jawanpuria, Pratik Kumar and Gao, Junbin},
  journal={Advances in Neural Information Processing Systems},
  volume={34},
  pages={8940--8953},
  year={2021}
}

@article{weber2023riemannian,
  title={Riemannian optimization via {F}rank-{W}olfe methods},
  author={Weber, Melanie and Sra, Suvrit},
  journal={Mathematical Programming},
  volume={199},
  number={1},
  pages={525--556},
  year={2023},
  publisher={Springer}
}

@book{gajic2008lyapunov,
  title={Lyapunov matrix equation in system stability and control},
  author={Gajic, Zoran and Qureshi, Muhammad Tahir Javed},
  year={2008},
  publisher={Courier Corporation}
}

@article{diamond2016cvxpy,
  author       = {Steven Diamond and Stephen Boyd},
  title        = {{CVXPY}: A {P}ython-Embedded Modeling Language for Convex Optimization},
  journal      = {Journal of Machine Learning Research},
  note         = {To appear},
  url          = {https://stanford.edu/~boyd/papers/pdf/cvxpy_paper.pdf},
  year         = {2016},
}

@article{novak2023adaptive,
  title={Adaptive {A}nderson mixing for electronic structure calculations},
  author={Nov{\'a}k, Maty{\'a}{\v{s}} and Vack{\'a}{\v{r}}, Ji{\v{r}}{\'\i} and Cimrman, Robert and {\v{S}}ipr, Ond{\v{r}}ej},
  journal={Computer Physics Communications},
  volume={292},
  pages={108865},
  year={2023},
  publisher={Elsevier}
}

@article{wei2023convergence,
  title={Convergence analysis for restarted {A}nderson mixing and beyond},
  author={Wei, Fuchao and Bao, Chenglong and Liu, Yang and Yang, Guangwen},
  journal={arXiv preprint arXiv:2307.02062},
  year={2023}
}

@article{irons1969version,
  title={A version of the {A}itken accelerator for computer iteration},
  author={Irons, Bruce M and Tuck, Robert C},
  journal={International Journal for Numerical Methods in Engineering},
  volume={1},
  number={3},
  pages={275--277},
  year={1969},
  publisher={Wiley Online Library}
}

@article{lee2022convergence,
title = {Convergence analysis of fixed-point iteration with Anderson Acceleration on a simplified neutronics/thermal-hydraulics system},
journal = {Nuclear Engineering and Technology},
volume = {54},
number = {2},
pages = {532-545},
year = {2022},
issn = {1738-5733},
doi = {https://doi.org/10.1016/j.net.2021.08.005},
url = {https://www.sciencedirect.com/science/article/pii/S1738573321004915},
author = {Jaejin, Lee and Han, Gyu Joo},
keywords = {Fixed-point iteration, Anderson acceleration, Convergence analysis, Thermal-hydraulic feedback},
abstract = {In-depth convergence analyses for neutronics/thermal-hydraulics (T/H) coupled calculations are performed to investigate the performance of nonlinear methods based on the Fixed-Point Iteration (FPI). A simplified neutronics-T/H coupled system consisting of a single fuel pin is derived to provide a testbed. The xenon equilibrium model is considered to investigate its impact during the nonlinear iteration. A problem set is organized to have a thousand different fuel temperature coefficients (FTC) and moderator temperature coefficients (MTC). The problem set is solved by the Jacobi and Gauss-Seidel (G-S) type FPI. The relaxation scheme and the Anderson acceleration are applied to improve the convergence rate of FPI. The performances of solution schemes are evaluated by comparing the number of iterations and the error reduction behavior. From those numerical investigations, it is demonstrated that the number of FPIs is increased as the feedback is stronger regardless of its sign. In addition, the Jacobi type FPIs generally shows a slower convergence rate than the G-S type FPI. It also turns out that the xenon equilibrium model can cause numerical instability for certain conditions. Lastly, it is figured out that the Anderson acceleration can effectively improve the convergence behaviors of FPI, compared to the conventional relaxation scheme.}
}

@article{sun2021damped,
  title={Damped Anderson mixing for deep reinforcement learning: Acceleration, convergence, and stabilization},
  author={Sun, Ke and Wang, Yafei and Liu, Yi and Pan, Bo and Jui, Shangling and Jiang, Bei and Kong, Linglong and others},
  journal={Advances in Neural Information Processing Systems},
  volume={34},
  pages={3732--3743},
  year={2021}
}

@article{liu2024anderson,
  title={Anderson acceleration of gradient methods with energy for optimization problems},
  author={Liu, Hailiang and He, Jia-Hao and Tian, Xuping},
  journal={Communications on Applied Mathematics and Computation},
  volume={6},
  number={2},
  pages={1299--1318},
  year={2024},
  publisher={Springer}
}

@article{evans2020proof,
  title={A proof that Anderson acceleration improves the convergence rate in linearly converging fixed-point methods (but not in those converging quadratically)},
  author={Evans, Claire and Pollock, Sara and Rebholz, Leo G and Xiao, Mengying},
  journal={SIAM Journal on Numerical Analysis},
  volume={58},
  number={1},
  pages={788--810},
  year={2020},
  publisher={SIAM}
}

@article{katsevich2024approximation,
  title={On the approximation accuracy of Gaussian variational inference},
  author={Katsevich, Anya and Rigollet, Philippe},
  journal={The Annals of Statistics},
  volume={52},
  number={4},
  pages={1384--1409},
  year={2024},
  publisher={Institute of Mathematical Statistics}
}

@inproceedings{diao2023forward,
  title={Forward-backward Gaussian variational inference via JKO in the Bures-Wasserstein space},
  author={Diao, Michael Ziyang and Balasubramanian, Krishna and Chewi, Sinho and Salim, Adil},
  booktitle={International Conference on Machine Learning},
  pages={7960--7991},
  year={2023},
  organization={PMLR}
}

@article{nguyen2023bridging,
  title={Bridging Bayesian and minimax mean square error estimation via Wasserstein distributionally robust optimization},
  author={Nguyen, Viet Anh and Shafieezadeh-Abadeh, Soroosh and Kuhn, Daniel and Mohajerin Esfahani, Peyman},
  journal={Mathematics of Operations Research},
  volume={48},
  number={1},
  pages={1--37},
  year={2023},
  publisher={INFORMS}
}

@article{wei2021stochastic,
  title={Stochastic Anderson mixing for nonconvex stochastic optimization},
  author={Wei, Fuchao and Bao, Chenglong and Liu, Yang},
  journal={Advances in Neural Information Processing Systems},
  volume={34},
  pages={22995--23008},
  year={2021}
}

\appendix
\section{Omitted proofs}
    \subsection{Properties of $\BWspace$ in a neighborhood of a distribution with a nondegenerate covariance matrix}\label{app:subsec_properties}
    In this section we collect technical lemmata on the local properties of the Bures-Wasserstein manifold.
    In particular, we relate the Bures-Wasserstein distance between the distributions and norms of the tangent vectors to Frobenius norms of the matrices in $\mathrm{Sym}(\mathbb{R}^d)$ that represent them.
    These lemmata are used to prove that the main assumptions on the manifold, needed for the local convergence of RAM, hold in a neighborhood of a nondegegnerate fixed-point (c.f.~\autoref{them:geom_main_assmp}).
    In the following sections, they will also be utilized in order to establish the contractivity of one of the operators in questions, as well as to analyze the properties of vector transport mappings. 
    
    The set $\cN_0^d$ of Gaussian distributions with mean zero and nondegenerate covariance matrix is not complete with respect to the Wasserstein distance.
    But since the Wasserstein distance metricizes weak convergence and it is equivalent to pointwise convergence of the characteristic functions, one can identify~\cite{takatsu2011wasserstein} the closure to the set
    \begin{equation*}
         \overline{\cN_0^d} = \left\{ 
            \mu: \varphi_\mu(\xi) = e^{-\frac{1}{2}\xi^T \Sigma\xi },\quad \Sigma^T = \Sigma \succeq 0 
        \right\}
    \end{equation*}
    where $\varphi_\mu$ denotes the characteristic function of $\mu$.

    Let us denote a Bures-Wasserstein ball with center $\Sigma_*$ and radius $r$, as
    \begin{equation}\label{eq:BW_ball}
        B_r(\Sigma_*) := \{\Sigma \in \BWspace: W_2(\Sigma, \Sigma_*) \leq r \}
    \end{equation}
    In the following, we show that the difference in the eigenvalues of the covariance matrices can be controlled by the Bures-Wasserstein distance between the respective distributions. 
    Thus, the distributions in a small ball $B_r(\Sigma_*)$ around a nondegenerate distribution $\Sigma_*$ have covariance matrices uniformly bound from above and bounded away from zero.
    This allows to prove several technical lemmas, useful for the analysis of the fixed-point operators and the RAM method.

    \begin{lemma}\label{lem:lowerandupperboundsoneigenvalues}
        Let $\rho_0, \rho_1 \in \overline{\cN_0^d}$, with covariance matrices $\Sigma_0 \succ 0,\ \Sigma_1 \succeq 0$.
        If the Wasserstein distance $W^2_2(\rho_0,\ \rho_1) \leq r^2$, then the following bound on the eigenvalues $\lambda^1_1 \geq \dots \lambda^1_d$ of $\Sigma_1$ holds:
        \begin{equation}\label{eq:lambda_W2_bound}
            \sum_k \left(\sqrt{\lambda^0_k} - \sqrt{\lambda^1_k}\right)^2 \leq r^2.
        \end{equation}
        In particular, 
        \begin{align}
            \lambda^1_d &\geq \left(\sqrt{\lambda^0_d} - r\right)^2 \label{eq:lambda_min_W2_bound}, \\
            \lambda^1_1 &\leq \left(\sqrt{\lambda^0_1} + r\right)^2 \label{eq:lambda_max_W2_bound}.    
        \end{align}
    \end{lemma}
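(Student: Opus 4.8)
The plan is to read the left-hand side of~\eqref{eq:lambda_W2_bound} directly off the closed-form Bures--Wasserstein distance~\eqref{eq:dW_Gaussian} and reduce everything to a single scalar inequality about traces. Expanding the square and using that $\Tr\Sigma_0 = \sum_k \lambda_k^0$ and $\Tr\Sigma_1 = \sum_k \lambda_k^1$ gives
\[
    \sum_k \left(\sqrt{\lambda_k^0} - \sqrt{\lambda_k^1}\right)^2
    = \Tr\Sigma_0 + \Tr\Sigma_1 - 2\sum_k \sqrt{\lambda_k^0 \lambda_k^1}.
\]
Comparing with~\eqref{eq:dW_Gaussian}, the claimed bound $\sum_k (\sqrt{\lambda_k^0} - \sqrt{\lambda_k^1})^2 \le W_2^2(\rho_0,\rho_1) \le r^2$ is therefore equivalent to the single cross-term inequality
\[
    \Tr\!\left(\left(\Sigma_0^{1/2}\Sigma_1\Sigma_0^{1/2}\right)^{1/2}\right)
    \le \sum_k \sqrt{\lambda_k^0 \lambda_k^1},
\]
so the whole lemma hinges on proving this one estimate.

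The main step is to recognize the left-hand side as a sum of singular values. Since $\Sigma_1^{1/2}$ is symmetric, $\Sigma_0^{1/2}\Sigma_1\Sigma_0^{1/2} = M M^\intercal$ with $M := \Sigma_0^{1/2}\Sigma_1^{1/2}$, so its eigenvalues are the squared singular values of $M$, and taking the trace of the matrix square root gives $\Tr((\Sigma_0^{1/2}\Sigma_1\Sigma_0^{1/2})^{1/2}) = \sum_k \sigma_k(M)$. I would then apply von Neumann's trace inequality via the variational formula $\sum_k \sigma_k(M) = \max_{U \in O(d)} \Tr(U M)$: writing $\Tr\big(U\Sigma_0^{1/2}\Sigma_1^{1/2}\big) = \Tr\big((U\Sigma_0^{1/2})\,\Sigma_1^{1/2}\big) \le \sum_k \sigma_k(U\Sigma_0^{1/2})\,\sigma_k(\Sigma_1^{1/2})$ and using that $\sigma_k(U\Sigma_0^{1/2}) = \sigma_k(\Sigma_0^{1/2}) = \sqrt{\lambda_k^0}$ for orthogonal $U$ (because $(U\Sigma_0^{1/2})^\intercal(U\Sigma_0^{1/2}) = \Sigma_0$), one gets $\sum_k \sigma_k(M) \le \sum_k \sqrt{\lambda_k^0}\sqrt{\lambda_k^1}$. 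This is exactly the cross-term inequality and establishes~\eqref{eq:lambda_W2_bound}.

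The two pointwise bounds then follow by isolating a single index. Since every summand in~\eqref{eq:lambda_W2_bound} is nonnegative, each obeys $\big(\sqrt{\lambda_k^0} - \sqrt{\lambda_k^1}\big)^2 \le r^2$, i.e.\ $|\sqrt{\lambda_k^0} - \sqrt{\lambda_k^1}| \le r$. For $k = 1$ this yields $\sqrt{\lambda_1^1} \le \sqrt{\lambda_1^0} + r$, and squaring a nonnegative inequality gives~\eqref{eq:lambda_max_W2_bound}; for $k = d$ it yields $\sqrt{\lambda_d^1} \ge \sqrt{\lambda_d^0} - r$, and provided $r \le \sqrt{\lambda_d^0}$ (which holds in the intended regime $r < \tfrac12\sqrt{\lambda_d^*}$, so the right-hand side is nonnegative) squaring gives~\eqref{eq:lambda_min_W2_bound}.

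The only genuine obstacle is the cross-term inequality, i.e.\ passing from the trace of a matrix square root to an index-wise product of eigenvalues; the cleanest lever is von Neumann's trace inequality (equivalently, the weak majorization of the singular values of a product by the products of singular values), and the matching of orderings is automatic because the eigenvalues are sorted decreasingly on both sides. A minor point to verify is that all of these steps remain valid on the degenerate boundary where $\Sigma_1$ is only positive semidefinite, but since the positive semidefinite square root $\Sigma_1^{1/2}$ and all the singular values above are still well defined, no modification of the argument is needed.
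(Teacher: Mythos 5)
Your proof is correct and takes essentially the same route as the paper's: both reduce the claim to the cross-term inequality $\Tr\bigl(\bigl(\Sigma_0^{1/2}\Sigma_1\Sigma_0^{1/2}\bigr)^{1/2}\bigr) \le \sum_k \sqrt{\lambda_k^0\lambda_k^1}$, identify the left-hand side as the sum of singular values of $\Sigma_0^{1/2}\Sigma_1^{1/2}$, and conclude by the von Neumann / weak-majorization inequality for singular values of a product. Your explicit remark that squaring the $k=d$ inequality requires $\sqrt{\lambda_d^0} - r \ge 0$ is a point the paper's proof leaves implicit, and it is indeed needed for \eqref{eq:lambda_min_W2_bound} to hold.
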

    \begin{proof}
        \[
            r^2 = W^2_2(\Sigma_0, \Sigma_1)  %
                = \Tr{\Sigma_0} + \Tr{\Sigma_1} - 2\Tr{\tilde{\Sigma}}%
                = \sum_k \lambda^0_k + \lambda^1_k - 2 \tilde{\lambda}_k
        \]
        where $\tilde{\Sigma} = \left(\Sigma_0^{\nicefrac{1}{2}}\Sigma_1\Sigma_0^{\nicefrac{1}{2}}\right)^{\nicefrac{1}{2}}$ and $\tilde{\lambda}_k$ its eigenvalues.
        Since $\tilde{\Sigma}$ is symmetric, its eigenvalues and singular values correspond.
        Since $\tilde{\Sigma}^T\tilde{\Sigma} = \Sigma_0^{\nicefrac{1}{2}}\Sigma_1\Sigma_0^{\nicefrac{1}{2}} = \left(\Sigma_1^{\nicefrac{1}{2}}\Sigma_0^{\nicefrac{1}{2}}\right)^T\Sigma_1^{\nicefrac{1}{2}}\Sigma_0^{\nicefrac{1}{2}}$, the singular values of $\tilde{\Sigma}$ are the same as the singular values of $\Sigma_1^{\nicefrac{1}{2}}\Sigma_0^{\nicefrac{1}{2}}$.
        As for the singular values of $\Sigma_1^{\nicefrac{1}{2}}\Sigma_0^{\nicefrac{1}{2}}$,
        \[
            \sum_k \sigma_k\left( \Sigma_1^{\nicefrac{1}{2}}\Sigma_0^{\nicefrac{1}{2}}\right) \leq 
                \sum_k \sigma_k\left(\Sigma_0^{\nicefrac{1}{2}}\right)\sigma_k\left(\Sigma_1^{\nicefrac{1}{2}}\right) = \sum_k \sqrt{\lambda^0_k \lambda^1_k}.
        \]
        Thus,
        \[
            r^2 = \sum_k \lambda^0_k + \lambda^1_k - 2 \tilde{\lambda}_k \geq \sum_k \lambda^0_k + \lambda^1_k - 2 \sqrt{\lambda^0_k \lambda^1_k}
            = \left(\sqrt{\lambda^0_k} - \sqrt{\lambda^1_k}\right)^2
        \]
        and \eqref{eq:lambda_min_W2_bound}, \eqref{eq:lambda_max_W2_bound} follow directly.
    \end{proof}

    In the sequel we focus on a ball $B_r(\Sigma_*)$. 
    We denote $0 < \lambda_1^* \leq \dots \leq \lambda_d^*$ the eigenvalues of $\Sigma_*$.
    The radius $r$ is small enough such that $\sqrt{\lambda_1^*} - 2r > 0$.

    The following lemma allows us to relate the norms of the vectors in the different tangent spaces:
    \begin{lemma}
        If $\Sigma, \Sigma_1, \Sigma_2 \in B_r(\Sigma_*)$, $V \in \TspaceBW{\Sigma_1}$, then the following estimates hold
        \begin{gather}
            (\sqrt{\lambda^*_d} - r)\|V\|_F \leq \|V\|_{\Sigma} \leq (\sqrt{\lambda^*_1} + r)\|V\|_F \label{eq:app_frob_tang}\\
            \|V\|_{\Sigma_2} \leq \frac{\sqrt{\lambda^*_1} + r}{\sqrt{\lambda^*_d} - r} \| V\|_{\Sigma_1} \label{eq:app_tang_norm_two_points}
        \end{gather}
    \end{lemma}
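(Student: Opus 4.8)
The plan is to establish the two-sided bound \eqref{eq:app_frob_tang} first, since \eqref{eq:app_tang_norm_two_points} then follows immediately by chaining the upper bound evaluated at $\Sigma_2$ with the lower bound evaluated at $\Sigma_1$. The key observation is that the tangent norm from \eqref{eq:BW_scalar_prod} can be rewritten as a trace against the covariance: by cyclicity of the trace, $\|V\|_\Sigma^2 = \Tr(V\Sigma V) = \Tr(\Sigma V^2)$. Here $V^2$ is symmetric positive semidefinite, and the whole expression is a weighted second moment of $\Sigma$, which is what makes the eigenvalue localization usable.

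First I would diagonalize $\Sigma = Q\Lambda Q^\top$ with $\Lambda = \operatorname{diag}(\mu_1,\dots,\mu_d)$ collecting the eigenvalues of $\Sigma$, and set $W = Q^\top V Q$, which is again symmetric because $V$ is. Then $\Tr(\Sigma V^2) = \Tr(\Lambda W^2) = \sum_{i,j} \mu_i W_{ij}^2$, i.e.\ a sum of squared entries weighted by the eigenvalues of $\Sigma$. Bounding each weight between $\lambda_{\min}(\Sigma)$ and $\lambda_{\max}(\Sigma)$, and using the orthogonal invariance $\|V\|_F^2 = \|W\|_F^2 = \sum_{i,j} W_{ij}^2$, yields $\lambda_{\min}(\Sigma)\|V\|_F^2 \leq \|V\|_\Sigma^2 \leq \lambda_{\max}(\Sigma)\|V\|_F^2$.

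Next I would feed in the eigenvalue localization already proven in \autoref{lem:lowerandupperboundsoneigenvalues}: for any $\Sigma \in B_r(\Sigma_*)$ its smallest and largest eigenvalues satisfy $\lambda_{\min}(\Sigma) \geq (\sqrt{\lambda^*_d} - r)^2$ and $\lambda_{\max}(\Sigma) \leq (\sqrt{\lambda^*_1} + r)^2$, where $\lambda^*_d$ and $\lambda^*_1$ denote the smallest and largest eigenvalues of $\Sigma_*$. Taking square roots, which is legitimate since the hypothesis $r < \tfrac12\sqrt{\lambda^*_d}$ keeps $\sqrt{\lambda^*_d}-r$ strictly positive, gives \eqref{eq:app_frob_tang} directly.

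Finally, for \eqref{eq:app_tang_norm_two_points} I would apply the upper bound of \eqref{eq:app_frob_tang} at $\Sigma_2$ and the lower bound at $\Sigma_1$, so that $\|V\|_{\Sigma_2} \leq (\sqrt{\lambda^*_1}+r)\|V\|_F \leq \frac{\sqrt{\lambda^*_1}+r}{\sqrt{\lambda^*_d}-r}\|V\|_{\Sigma_1}$; here it is worth noting that $V$ is a fixed symmetric matrix, so both tangent norms are well defined irrespective of the tangent space $V$ nominally belongs to. I do not expect a genuine obstacle: the analytic content is carried entirely by \autoref{lem:lowerandupperboundsoneigenvalues}, and the only point requiring a little care is setting up the weighted-trace estimate in the eigenbasis of $\Sigma$ together with the orthogonal invariance of the Frobenius norm.
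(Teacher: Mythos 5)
Your proof is correct and follows essentially the same route as the paper: both establish the intermediate bound $\lambda_{\min}(\Sigma)\|V\|_F^2 \leq \|V\|_\Sigma^2 \leq \lambda_{\max}(\Sigma)\|V\|_F^2$, invoke the eigenvalue localization of \autoref{lem:lowerandupperboundsoneigenvalues} to make the bounds uniform over $B_r(\Sigma_*)$, and obtain \eqref{eq:app_tang_norm_two_points} by chaining the two one-sided estimates. The only (cosmetic) difference is how that elementary trace inequality is verified — you diagonalize $\Sigma$ and bound a weighted sum of squared entries, while the paper writes $\Tr(V^\intercal \Sigma V) = \sum_i v_i^\intercal \Sigma v_i$ over the columns $v_i$ of $V$ and bounds each quadratic form; these are interchangeable.
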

    \begin{proof}
        Assuming $A \in \mathbb{R}^{d\times d}, B \in \mathrm{Sym}(\mathbb{R}^d)$, $\lambda_1 \Id \succeq B \succeq \lambda_d \Id$
        \begin{gather*}
            \Tr\left(A^T B A\right) = \sum\limits_i a_i^T B a_i \geq \lambda_d \sum\limits_i a_i^T  a_i \geq \lambda_d\Tr\left(A^TA \right) = \lambda_d \|A\|_F^2 \\
            \Tr\left(A^T B A\right) = \sum\limits_i a_i^T B a_i \leq \lambda_1 \sum\limits_i a_i^T  a_i \leq \lambda_1\Tr\left(A^TA \right) = \lambda_1 \|A\|_F^2
        \end{gather*}
        where $a_i$ are columns of $A$.
        For $\Sigma \in \BWspace$, $V \in \TspaceBW{\Sigma}$, plugging $A \gets V,\ B \gets \Sigma$
        \begin{equation}\label{eq:tang_norm_to_frob_local}
            \lambda_d(\Sigma) \|V\|^2_F \leq \|V\|^2_\Sigma \leq \lambda_1(\Sigma) \| V\|^2_F
        \end{equation}
        Using the uniform bounds \eqref{eq:lambda_min_W2_bound}-\eqref{eq:lambda_max_W2_bound}, one arrives at~\eqref{eq:app_frob_tang}.
        Combining~\eqref{eq:app_frob_tang} twice for $\Sigma=\Sigma_{1, 2}$, one gets~\eqref{eq:app_tang_norm_two_points}.
    \end{proof}
    
    \begin{lemma}\label{lem:frobenius_by_bw}
        Let $\Sigma_1, \Sigma_2 \in B_r(\Sigma_*)$ with $r < \frac{1}{2}\sqrt{\lambda_d^*}$.
        Then the Frobenius norm between the covariance matrices can be controlled by the Bures-Wasserstein distance between the distributions, i.e.
        \begin{equation}\label{eq:app_w2_frobenius_bound}
            \|\Sigma_1 - \Sigma_2\|_F \leq 2\sqrt{2} \left(\sqrt{\lambda^*_1} + \sqrt{\lambda^*_d}\right) W_2(\Sigma_1,\ \Sigma_2)
        \end{equation}
    \end{lemma}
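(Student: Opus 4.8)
The plan is to dominate the Frobenius distance $\|\Sigma_1-\Sigma_2\|_F$ by the length of the Bures--Wasserstein geodesic joining the two covariances, using the explicit geodesic formula \eqref{eq:geodesic_Gauss}. Let $T$ be the optimal map between $\Sigma_1$ and $\Sigma_2$ and write $\Sigma(t) = ((1-t)\Id + tT)\,\Sigma_1\,((1-t)\Id + tT)$ for the constant-speed geodesic from $\Sigma(0)=\Sigma_1$ to $\Sigma(1)=\Sigma_2$. Its velocity is represented by the tangent field $V(t)\in\operatorname{Sym}(d)$ solving the Lyapunov equation $\dot\Sigma(t) = V(t)\Sigma(t) + \Sigma(t)V(t)$, which is the differential of the exponential map \eqref{eq:rExp_gaussian} (and is solvable since $\Sigma(t)\succ0$). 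Because $\Sigma(t)$ is a geodesic, its Bures--Wasserstein speed is constant and equal to the distance, i.e. $\|V(t)\|_{\Sigma(t)} = W_2(\Sigma_1,\Sigma_2)$ for all $t$. First I would write $\Sigma_2 - \Sigma_1 = \int_0^1 \dot\Sigma(t)\,\dd t$ and apply the triangle inequality for the Frobenius norm to obtain $\|\Sigma_1-\Sigma_2\|_F \le \int_0^1 \|\dot\Sigma(t)\|_F\,\dd t$.

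Next I would bound the integrand pointwise. Since $V(t)$ and $\Sigma(t)$ are symmetric, $\|V(t)\Sigma(t)\|_F = \|\Sigma(t)V(t)\|_F$, hence $\|\dot\Sigma(t)\|_F \le 2\|V(t)\Sigma(t)\|_F$. Factoring $\Sigma(t)=\Sigma(t)^{1/2}\Sigma(t)^{1/2}$ and using $\|CD\|_F \le \|C\|_F\|D\|_{\mathrm{op}}$ together with $\|V\Sigma^{1/2}\|_F^2 = \Tr(V\Sigma V) = \|V\|_\Sigma^2$ from the definition \eqref{eq:BW_scalar_prod}, one gets $\|V(t)\Sigma(t)\|_F \le \sqrt{\lambda_{\max}(\Sigma(t))}\,\|V(t)\|_{\Sigma(t)}$, where $\|\Sigma(t)^{1/2}\|_{\mathrm{op}}=\sqrt{\lambda_{\max}(\Sigma(t))}$. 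Combining this with the constant-speed identity yields the clean bound $\|\dot\Sigma(t)\|_F \le 2\sqrt{\lambda_{\max}(\Sigma(t))}\,W_2(\Sigma_1,\Sigma_2)$.

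It remains to control $\lambda_{\max}(\Sigma(t))$ uniformly in $t$, which I expect to be the only delicate point: Bures--Wasserstein balls need not be geodesically convex, so one cannot simply assume $\Sigma(t)\in B_r(\Sigma_*)$. I would instead use the triangle inequality, $W_2(\Sigma(t),\Sigma_*) \le W_2(\Sigma(t),\Sigma_1) + W_2(\Sigma_1,\Sigma_*) = t\,W_2(\Sigma_1,\Sigma_2) + W_2(\Sigma_1,\Sigma_*)$ and symmetrically from $\Sigma_2$, so that taking the smaller of the two estimates gives $\Sigma(t)\in B_{2r}(\Sigma_*)$ for every $t$ (the worst case being $t=\tfrac12$). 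Then \autoref{lem:lowerandupperboundsoneigenvalues}, specifically \eqref{eq:lambda_max_W2_bound}, yields $\sqrt{\lambda_{\max}(\Sigma(t))} \le \sqrt{\lambda_{\max}(\Sigma_*)} + 2r$ uniformly, and plugging this into the integral gives $\|\Sigma_1-\Sigma_2\|_F \le 2\big(\sqrt{\lambda_{\max}(\Sigma_*)} + 2r\big)W_2(\Sigma_1,\Sigma_2)$. Since the standing radius assumption guarantees $2r < \sqrt{\lambda_{\min}(\Sigma_*)}$, the right-hand side is at most $2\big(\sqrt{\lambda_{\max}(\Sigma_*)}+\sqrt{\lambda_{\min}(\Sigma_*)}\big)W_2 \le 2\sqrt2\big(\sqrt{\lambda^*_1}+\sqrt{\lambda^*_d}\big)W_2(\Sigma_1,\Sigma_2)$, which is exactly \eqref{eq:app_w2_frobenius_bound}. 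Note that this route in fact produces a slightly sharper constant than the stated $2\sqrt2$; I would retain the generous factor as a margin that absorbs whichever intermediate inequalities are used for the velocity term.
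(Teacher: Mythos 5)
Your proof is correct, but it takes a genuinely different route from the paper's. The paper works with the dynamic formulation \eqref{eq:ot_dynamic}: it lower-bounds $W_2^2(\Sigma_1,\Sigma_2)$, viewed as an infimum over \emph{all} admissible curves, by first arguing that any curve leaving $B_{2r}(\Sigma_*)$ is too long to be optimal, then inverting the Lyapunov operator $P_\Sigma[V]=\Sigma V+V\Sigma$ with spectral bounds so as to dominate the Bures--Wasserstein action by the Euclidean action $\int\|\dot\Sigma(t)\|_F^2\,\dd t$, and finally comparing against the straight line, whose action is $\|\Sigma_1-\Sigma_2\|_F^2$. You instead estimate along the single explicit geodesic \eqref{eq:geodesic_Gauss}: fundamental theorem of calculus, the pointwise submultiplicative bound $\|\dot\Sigma(t)\|_F\le 2\sqrt{\lambda_{\max}(\Sigma(t))}\,\|V(t)\|_{\Sigma(t)}$, and the constant-speed identity. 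Both arguments rest on the same two ingredients---the Lyapunov relation $\dot\Sigma=V\Sigma+\Sigma V$ and uniform eigenvalue control on the enlarged ball $B_{2r}(\Sigma_*)$ via \autoref{lem:lowerandupperboundsoneigenvalues} (your triangle-inequality argument that the geodesic stays in $B_{2r}$ mirrors the paper's restriction to the class $\mathcal{A}_r$)---but the direction of estimation is reversed: you bound $\|\Sigma_1-\Sigma_2\|_F$ from above along one curve rather than bounding the infimum from below over all curves. Your version is shorter, avoids inverting $P_\Sigma$ and the comparison-of-infima step, and indeed yields the slightly sharper constant $2\bigl(\sqrt{\lambda_1^*}+2r\bigr)\le 2\bigl(\sqrt{\lambda_1^*}+\sqrt{\lambda_d^*}\bigr)$; its price is reliance on the closed form and constant-speed parameterization of the Bures--Wasserstein geodesic, whereas the paper's variational argument never needs the connecting geodesic explicitly and thus transfers more readily to settings where only the dynamic formulation is available.
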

    \begin{proof}
        In the dynamic OT formulation
        \begin{equation}\label{eq:ot_dynamic}
            W^2_2(\Sigma_1, \Sigma_2) 
            = \inf_{\Sigma(\cdot) \in \mathcal{A}}
            \frac{1}{2}\int \left\|V(t) \right\|^2_{\Sigma(t)}\dd t
        \end{equation}
        where $V(t) \in \TspaceBW{\Sigma(t)}$ is a tangent vector to the curve $\Sigma(t)$, and the admissible set of curves $\mathcal{A}$ is defined as
        \begin{equation}
            \mathcal{A} := \left\{\Sigma(t): t\in [0, 1]  
                \left| \substack{
                        \Sigma(t) \in \BWspace \\
                        \Sigma(0) = \Sigma_1 \\
                        \Sigma(1) = \Sigma_2 
                    }  \right. \right\} 
        \end{equation}
        
        First, we show that the infimum in~\eqref{eq:ot_dynamic} wouldn't change if we only consider such curves that $W_2(\Sigma(t), \Sigma_*) \leq 2r \forall t \in [0; 1]$.
        Since $\Sigma_1, \Sigma_2 \in B_r(\Sigma_*)$, from the triangle inequality we argue that
        \[
            W_2(\Sigma_1, \Sigma_2) \leq W_2(\Sigma_1, \Sigma_*) + W_2(\Sigma_*, \Sigma_2) \leq 2r
        \]
        Indeed, let us consider a curve $\Sigma(\cdot)$ and fix some $t\in[0; 1]$.
        We denote  $\Sigma_t := \Sigma(t)$.
        The length of the curve is not smaller than the length of the piecewise geodesic:
        \[
            L(\Sigma(\cdot)) := \sqrt{\frac{1}{2}\int \left\|V(t) \right\|^2_{\Sigma(t)}\dd t} \geq W_2(\Sigma_1, \Sigma_t) + W_2(\Sigma_t, \Sigma_2)
        \]
        Considering the triangle inequality for geodesic triangles $\Delta \Sigma_1 \Sigma_*\Sigma_t$ and $\Delta\Sigma_*\Sigma_t\Sigma_2$, one gets:
        \begin{align*}
            W_2(\Sigma_*, \Sigma_t) \leq W_2(\Sigma_1, \Sigma_t) + W_2(\Sigma_*, \Sigma_1) 
                &\implies  W_2(\Sigma_1, \Sigma_t) \geq W_2(\Sigma_*, \Sigma_t) - W_2(\Sigma_*, \Sigma_1) \\
            W_2(\Sigma_*, \Sigma_t) \leq W_2(\Sigma_t, \Sigma_2) + W_2(\Sigma_2, \Sigma_*)
             &\implies  W_2(\Sigma_t, \Sigma_2) \geq W_2(\Sigma_*, \Sigma_t) - W_2(\Sigma_*, \Sigma_2) \\
        \end{align*}
        Combining with the previous inequality one gets
        \[
            L(\Sigma(\cdot)) 
                \geq W_2(\Sigma_1, \Sigma_t) + W_2(\Sigma_t, \Sigma_2) 
                    \geq 2  W_2(\Sigma_*, \Sigma_t) -  W_2(\Sigma_*, \Sigma_2)- W_2(\Sigma_*, \Sigma_1) > 2W_2(\Sigma_t, \Sigma_*) - 2r
        \]
        If the curve is such that $W_2(\Sigma_t, \Sigma_*) > 2r$, we argue that
        \[
            L(\Sigma(\cdot)) > 4r - 2r = 2r \geq W_2(\Sigma_1, \Sigma_2)
        \]
        and the curve is not optimal.
        Let's denote the improved admissible set of curves by
        \begin{equation}
            \mathcal{A}_r := \left\{\Sigma(t): t\in [0, 1] : 
                \substack{
                        \Sigma(t) \in \BWspace \\
                        W_2(\Sigma(t),\Sigma_*) \leq 2r\  \forall t \in [0; 1]\\
                        \Sigma(0) = \Sigma_1 \\
                        \Sigma(1) = \Sigma_2 
                    }  \right\}
        \end{equation}
        $V(t)$ and $\dot\Sigma(t)$ are related as follows:
        \[
            \Sigma(t + \tau) \underset{\tau \to 0}{\approx} (\Id + \tau V(t))\Sigma(t)(\Id + \tau V(t))
        \]
        thus
        \[
            \dot\Sigma(t) = \Sigma(t)V(t) + V(t)\Sigma(t)
        \]
        Let us define the operator $P_\Sigma$
        \begin{align*}
            P_\Sigma : (\operatorname{Sym}(\mathbb{R}^d),\ \|\cdot\|_F) &\to (\operatorname{Sym}(\mathbb{R}^d),\ \|\cdot\|_F) \\
            P_\Sigma[V] &= \Sigma V + V \Sigma
        \end{align*}
        For $U,\ V \in \operatorname{Sym}(\mathbb{R}^d)$
        \begin{equation*}
            \langle U, P_\Sigma [V] \rangle_F = \Tr\left(U(\Sigma V + V \Sigma) \right) = 2\Tr\left(U\Sigma V\right)
        \end{equation*}
        Thus, $P_\Sigma$ is symmetric.
        On the admissible set of curves $\mathcal{A}_r$, the following bound on its spectrum can be made using~\eqref{eq:app_frob_tang}:
        \[
            2(\sqrt{\lambda^*_d} - 2r)^2 \| V\|^2_F \leq \langle V, P_\Sigma [V]\rangle_F \leq 2(\sqrt{\lambda^*_1} + 2r)^2
        \]
        Thus, $P_\Sigma$ is invertible.
        We define $L_\Sigma := P_\Sigma^{-1}$.
        For $L_\Sigma$ it holds
        \begin{equation}
            \langle U, L_\Sigma[U] \rangle_F \geq \frac{1}{2(\sqrt{\lambda^*_1} + 2r)^2} \|U\|^2_F
        \end{equation}

        For our curve $\Sigma(t)$, since $V(t)$ is tangent, we have
        \begin{align*}
            \dot\Sigma(t) = P_{\Sigma(t)}[V(t)] \\
            V(t) = L_{\Sigma(t)}[\dot\Sigma(t)]
        \end{align*}
        Then (omitting time dependence for brevity)
        \begin{multline*}
            \|V\|^2_\Sigma 
            = \Tr\left(V\Sigma V\right)
            = \frac{1}{2}\Tr\left(V(\Sigma V + V\Sigma\right)
            = \frac{1}{2}\Tr\left(V\dot\Sigma)\right) = \\
            = \frac{1}{2}\Tr\left(L_\Sigma[\dot\Sigma] \dot\Sigma \right)
            = \frac{1}{2}\langle \dot\Sigma, L_{\Sigma}[\dot\Sigma]\rangle_F
            \geq \frac{1}{2(\sqrt{\lambda^*_1} + 2r)^2} \|\dot\Sigma\|^2_F
        \end{multline*}
        where the final inequality holds only for the curves in $\mathcal{A}_r$.
        
        Combining the estimates, we arrive at
        \begin{multline*}
            W^2_2(\Sigma_1, \Sigma_2) 
            = \inf_{\mathcal{A}}
                \frac12 \int \left\| V(t) \right\|^2_{\Sigma(t)} \dd t = \\
            = \inf_{\mathcal{A}_r}
                \frac12 \int \left\| V(t) \right\|^2_{\Sigma(t)} \dd t
            = \inf_{\mathcal{A}_r}
            \frac{1}{4}\int\Tr\left(\mathop{L_{\Sigma(t)}}[\dot\Sigma(t)]\dot\Sigma(t) \right) \dd t \geq \\ 
            \overset{(*)}{\geq}\frac{1}{8 (\sqrt{\lambda^*_1} +  \sqrt{\lambda^*_d})^2}\inf_{\substack{
                            \Sigma(t) \in \BWspace \\
                            \Sigma(0) = \Sigma_1 \\
                            \Sigma(1) = \Sigma_2 
                        }}
            \int \left\| \dot\Sigma(t)\right\|_F^2 \dd t 
            = \frac{\|\Sigma_1 - \Sigma_2\|_F^2}{8 (\sqrt{\lambda^*_1} + \sqrt{\lambda^*_d})^2}
        \end{multline*}
        In~${(*)}$,  we can minimize with respect to the original admissible set $\mathcal{A}$, since $\mathcal{A}_r \subset \mathcal{A}$ and thus
        $\inf_\mathcal{A} \cF \leq \inf_{\mathcal{A}_r} \cL$ for any functional $\cL$.
        \[
            \Sigma_e(t) := (1 - t)\Sigma_1 + t\Sigma_2  
        \]
        and $\sqrt{\lambda^*_d} < 2r$.
        Thus 
        \begin{equation*}
            \|\Sigma_1 - \Sigma_2\|_F \leq {2\sqrt{2} (\sqrt{\lambda^*_1} + \sqrt{\lambda^*_d})} W_2(\Sigma_1,\ \Sigma_2)
        \end{equation*}
    \end{proof}

    The explicit expression for the sectional curvature of the Bures-Wasserstein space is given in~\cite[Theorem {$1.1$}]{takatsu2010wasserstein}.
    Its minimal and maximal values depend only on the minimal and maximal eigenvalues of the covariance matrix.
    We express this fact the following corollary
    \begin{corollary}\label{cor:sec_curve}
        For every $\Sigma \in B_r(\Sigma_*)$, the sectional curvature $K_\Sigma$ of $\BWspace$ at $\Sigma$ can uniformly bound by
        \begin{equation}
            0 \leq K_{\Sigma}(e_1, e_2) \leq \frac{3 (\sqrt{\lambda^*_1} + r)^4}{2(\sqrt{\lambda^*_d} - r)^6}.
        \end{equation}
        where $e_1,\ e_2$ are the directions, defining the plane and $r < \sqrt{\lambda_d^*}$
    \end{corollary}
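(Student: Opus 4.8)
The plan is to start from the closed-form expression for the sectional curvature of $\BWspace$ derived in~\cite[Theorem~1.1]{takatsu2010wasserstein}, which represents $K_\Sigma(e_1,e_2)$ as an explicit algebraic function of $\Sigma$ and of the two tangent directions $e_1,e_2 \in \TspaceBW{\Sigma}$ spanning the plane. The essential structural feature of that formula, which ultimately stems from realizing the Bures--Wasserstein metric as the base of a Riemannian submersion of a flat total space (so that the curvature appears as an O'Neill-type term, proportional to the squared norm of the vertical part of a bracket of horizontal lifts), is that it exhibits $K_\Sigma$ as a ratio of manifestly nonnegative quantities. From this, the lower bound $K_\Sigma(e_1,e_2) \geq 0$ is immediate and does not use the ball restriction at all.

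For the upper bound, I would first reduce Takatsu's formula to a pointwise estimate depending only on the extremal eigenvalues of $\Sigma$. The curvature formula is built from tangent vectors obtained by solving Lyapunov equations of the form $\Sigma X + X\Sigma = \cdot$, whose solution operators are controlled in norm by $1/\lambda_{\min}(\Sigma)$, while the metric normalization appearing in the denominator contributes the remaining powers of $\lambda_{\min}(\Sigma)$ and $\lambda_{\max}(\Sigma)$. Optimizing over all admissible spanning planes $(e_1,e_2)$ and tracking the powers, one obtains the intermediate, direction-free estimate
\begin{equation*}
    0 \leq K_\Sigma(e_1,e_2) \leq \frac{3}{2}\,\frac{\lambda_{\max}(\Sigma)^2}{\lambda_{\min}(\Sigma)^3}.
\end{equation*}

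Finally, I would transfer this eigenvalue bound into the stated uniform bound over $B_r(\Sigma_*)$ by invoking the eigenvalue perturbation estimates of~\autoref{lem:lowerandupperboundsoneigenvalues}. Since $\Sigma \in B_r(\Sigma_*)$, estimate~\eqref{eq:lambda_max_W2_bound} gives $\lambda_{\max}(\Sigma) \leq (\sqrt{\lambda^*_1}+r)^2$ and estimate~\eqref{eq:lambda_min_W2_bound} gives $\lambda_{\min}(\Sigma) \geq (\sqrt{\lambda^*_d}-r)^2 > 0$ whenever $r < \sqrt{\lambda^*_d}$; substituting $\lambda_{\max}(\Sigma)^2 \leq (\sqrt{\lambda^*_1}+r)^4$ and $\lambda_{\min}(\Sigma)^3 \geq (\sqrt{\lambda^*_d}-r)^6$ into the intermediate bound yields exactly the claimed estimate, uniformly in $\Sigma$ over the ball. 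I expect the main obstacle to be the middle step: carefully extracting from Takatsu's fairly intricate curvature formula the supremum over all tangent planes and verifying that it collapses to the clean expression $\tfrac{3}{2}\,\lambda_{\max}^2/\lambda_{\min}^3$ with precisely the constant $3/2$ and the exponents $(2,3)$. Everything else is either immediate (nonnegativity) or a routine substitution (the eigenvalue bounds).
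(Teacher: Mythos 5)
Your proposal takes essentially the same route as the paper: the paper likewise obtains this corollary by citing the explicit sectional curvature formula of Takatsu (Theorem~1.1), noting that its extremal values depend only on $\lambda_{\min}(\Sigma)$ and $\lambda_{\max}(\Sigma)$, and then substituting the eigenvalue perturbation bounds of \autoref{lem:lowerandupperboundsoneigenvalues} on the ball $B_r(\Sigma_*)$. Your intermediate, direction-free estimate $\tfrac{3}{2}\,\lambda_{\max}(\Sigma)^2/\lambda_{\min}(\Sigma)^3$ is exactly the quantity the paper implicitly relies on (it reproduces the stated bound after substitution), so the argument is correct and matches the paper's proof.
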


    We can finally formulate the theorem showing that the main assumptions of the RAM's convergence hold for $B_r(\Sigma_*)$ (analogous to Assumption~$1$ of \cite{li2023riemannian}).
    \begin{theorem}\label{them:geom_main_assmp}
        Let $\Sigma_*$ be a nondegenerate covariance matrix  and $r < \frac{2}{3}\sqrt{\lambda_d^*}$.
        The ball $B_r(\Sigma_*)$ is compact in $\cN_0^d$ with bounded sectional curvature.
        For every $V \in \TspaceBW{\Sigma}$ such that 
            $\|V\|_\Sigma \leq \frac{1}{2}r$, 
        it holds that
            $W_2(\Sigma, \rExp_\Sigma(V)) = \|V\|_\Sigma$ for all $\Sigma\in \cM$.
    \end{theorem}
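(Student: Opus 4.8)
The statement bundles three assertions about the ball $B_r(\Sigma_*)$, and my plan is to settle compactness and the curvature bound almost immediately from the preceding lemmas, reserving the genuine computation for the exponential-map identity. For compactness, I would first invoke \autoref{lem:lowerandupperboundsoneigenvalues}: every $\Sigma \in B_r(\Sigma_*)$ has all its eigenvalues confined to $[(\sqrt{\lambda_d^*}-r)^2,\,(\sqrt{\lambda_1^*}+r)^2]$, and since $r < \tfrac23\sqrt{\lambda_d^*} < \sqrt{\lambda_d^*}$ the lower endpoint is strictly positive, so that $\lambda_d^*$ being the smallest eigenvalue forces $B_r(\Sigma_*)$ to stay inside the positive-definite cone, i.e. inside $\cN_0^d$. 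Consequently $B_r(\Sigma_*)$ lies in the spectral box $S := \{\Sigma \in \operatorname{Sym}(d): (\sqrt{\lambda_d^*}-r)^2\Id \preceq \Sigma \preceq (\sqrt{\lambda_1^*}+r)^2\Id\}$, which is closed and bounded, hence Frobenius-compact. Because $W_2$ metrizes weak convergence on $\overline{\cN_0^d}$, and for centered Gaussians weak convergence is exactly entrywise (Frobenius) convergence of covariances, the $W_2$-topology and the Frobenius topology agree on $S$; the closed $W_2$-ball $B_r(\Sigma_*)$ is therefore a closed subset of the compact $S$, giving compactness in $\cN_0^d$. The curvature bound is then immediate from \autoref{cor:sec_curve}, whose hypothesis $r < \sqrt{\lambda_d^*}$ holds a fortiori.

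For the identity $W_2(\Sigma, \rExp_\Sigma(V)) = \|V\|_\Sigma$ I would argue purely algebraically from the closed form~\eqref{eq:dW_Gaussian}, sidestepping any injectivity-radius machinery. Writing $T := \Id + V$, the exponential map~\eqref{eq:rExp_gaussian} gives $\Sigma_1 := \rExp_\Sigma(V) = T\Sigma T$. The first step is to show $T \succ 0$: from the lower estimate in~\eqref{eq:app_frob_tang} one has $\|V\|_F \le \|V\|_\Sigma/(\sqrt{\lambda_d^*}-r) \le (r/2)/(\sqrt{\lambda_d^*}-r)$, and the constraint $r < \tfrac23\sqrt{\lambda_d^*}$ is precisely what makes this ratio strictly less than one; since $\|V\|_{\mathrm{op}} \le \|V\|_F < 1$ and $V$ is symmetric, $T = \Id+V$ is symmetric positive definite. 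The second step exploits this positivity to resolve the matrix square root: the congruence $M := \Sigma^{1/2}T\Sigma^{1/2}$ is positive definite and satisfies $M^2 = \Sigma^{1/2}\Sigma_1\Sigma^{1/2}$, so by uniqueness of the positive-semidefinite square root $(\Sigma^{1/2}\Sigma_1\Sigma^{1/2})^{1/2} = \Sigma^{1/2}T\Sigma^{1/2}$ and hence $\Tr(\Sigma^{1/2}\Sigma_1\Sigma^{1/2})^{1/2} = \Tr(T\Sigma)$. Substituting into~\eqref{eq:dW_Gaussian} together with $\Tr\Sigma_1 = \Tr(T^2\Sigma)$ collapses the distance to $\Tr(\Sigma) - 2\Tr(T\Sigma) + \Tr(T^2\Sigma) = \Tr((T-\Id)^2\Sigma) = \Tr(V\Sigma V) = \|V\|_\Sigma^2$, which is the claim (valid for every $\Sigma \in B_r(\Sigma_*)$).

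The substantive point — and the only place the sharp constant $\tfrac23$ is used — is the positivity $\Id + V \succ 0$: the whole identity hinges on converting the intrinsic bound $\|V\|_\Sigma \le \tfrac12 r$ into an operator-norm bound below $1$, which needs the tangent-to-Frobenius comparison~\eqref{eq:app_frob_tang} and the precise radius threshold. Once positivity is secured, the square-root identification and the ensuing cyclic-trace manipulation are routine. I expect no real difficulty in the compactness or curvature claims beyond care with the eigenvalue-ordering convention; the only residual check is that $B_r(\Sigma_*)$, as a closed $W_2$-ball contained in $S$, is genuinely Frobenius-closed, for which the equivalence of the two metrics on $S$ suffices, with \autoref{lem:frobenius_by_bw} supplying the quantitative one-sided comparison $\|\Sigma_1-\Sigma_2\|_F \le C\,W_2(\Sigma_1,\Sigma_2)$ if a constructive bound is preferred over the topological argument.
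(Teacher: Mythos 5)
Your proposal is correct, and the interesting part of it takes a genuinely different route from the paper for the exponential-map identity. Both arguments hinge on the identical key step — converting $\|V\|_\Sigma \le \tfrac12 r$ into the operator bound $\|V\|_o \le \|V\|_F \le \frac{r}{2(\sqrt{\lambda_d^*}-r)} \le 1$ via \eqref{eq:app_frob_tang}, which is exactly where the threshold $r < \tfrac23\sqrt{\lambda_d^*}$ enters — but they diverge afterwards. The paper observes that $(\Id+V)x = \nabla_x \tfrac12\langle x,(\Id+V)x\rangle$ is the gradient of a convex function once $\Id+V \succeq 0$, and invokes Brenier-type optimality: an optimal map's $L^2$ displacement norm is the Wasserstein distance, so $W_2(\Sigma,\rExp_\Sigma(V)) = \|V\|_\Sigma$ in one line. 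You instead verify the identity by pure matrix algebra from the closed form \eqref{eq:dW_Gaussian}: with $T = \Id+V \succ 0$, the matrix $M = \Sigma^{1/2}T\Sigma^{1/2}$ is the unique positive square root of $\Sigma^{1/2}\rExp_\Sigma(V)\Sigma^{1/2}$, whence $\Tr\bigl(\Sigma^{1/2}\rExp_\Sigma(V)\Sigma^{1/2}\bigr)^{1/2} = \Tr(T\Sigma)$ and the distance collapses to $\Tr\bigl((T-\Id)^2\Sigma\bigr) = \|V\|_\Sigma^2$; this computation is correct. What each buys: your route is more elementary and self-contained (no optimal-transport theory beyond the stated formula, only uniqueness of positive-semidefinite square roots), and in effect re-derives the Gaussian specialization of Brenier; the paper's route is shorter and conceptually cleaner, and — since optimality of gradients of convex functions holds for arbitrary measures — it is the argument that would survive outside the Gaussian setting, where no closed-form distance is available. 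Your compactness argument is also sound and in fact supplies detail (the spectral box, equivalence of the $W_2$ and Frobenius topologies on it) that the paper compresses into a one-sentence citation of \autoref{lem:lowerandupperboundsoneigenvalues} and \autoref{cor:sec_curve}; one could alternatively cite \autoref{lem:frobenius_by_bw} for the quantitative comparison, as you note.
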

    \begin{proof}[Proof of \autoref{them:geom_main_assmp}]
    Closedness of the ball and the sectional curvature bounds are due to~\autoref{lem:lowerandupperboundsoneigenvalues} and~\autoref{cor:sec_curve}.

    Every tangent vector $V$ induces a mapping $T(x) = Vx + x = (\Id + V)x = \nabla_x \frac{1}{2}\langle x, (\Id +V)\rangle$.
    Using~\eqref{eq:app_frob_tang}, we get
    \[
        \|V\|_o 
        \leq \| V\|_F 
        \leq \frac{1}{\sqrt{\lambda_*^d} - r} \|V\|_\Sigma
        \leq \frac{r}{2(\sqrt{\lambda_*^d} - r)} = \frac{\nicefrac{2}{3}}{2(1 - \nicefrac{2}{3})} = 1
    \]
    Since $\| V\|_o \leq 1$, operator $(\Id + V)$ has nonnegative eigenvalues, and the mapping $T$ is a gradient of a convex function.
    Thus, the mapping is optimal, and its $L_2$ norm is equal to the Wasserstein distance between the distributions.
    \end{proof}

    \subsection{Contraction coefficients of the operators}\label{app:subsec_contraction}
    For the operators of the form 
    \begin{equation}\label{eq:operator_gd_step}
        G(\rho) = \rExp_{\rho}(-h\partial_W \cE(\rho))
    \end{equation}
    the contraction estimate can be deduced as follows.
    Consider random variables $x^\prime \sim G(\rho_1),\ y^\prime \sim G(\rho_2)$, and a coupling
    \begin{align*}
        x^\prime &= x - h\partial_W \cE(\rho_1, x) \\
        y^\prime &= y - h\partial_W \cE(\rho_2, y) \\
        (x,\ y) &\sim \gamma_{\text{opt}}
    \end{align*}
    where $\gamma_{\text{opt}}$ is an optimal plan between $\rho_1$ and $\rho_2$.
    This coupling is a valid coupling between $G(\rho_1)$ and $G(\rho_2)$.
    Thus
    \begin{multline}
        W^2_2(G(\rho_1), G(\rho_2)) \leq \int \|x^\prime - y^\prime \|^2_2\dd \gamma_{\text{opt}} = 
        \int \|x - y - h( \partial_W \cE(\rho_1, x) - \partial_W \cE(\rho_2, y)) \|^2\dd \gamma_{\text{opt}} = \\ =
            \int \|x - y\|^2\dd \gamma_{\text{opt}}
            - 2h\int \langle \partial_W \cE(\rho_1, x) - \partial_W \cE(\rho_2, y), x - y\rangle \dd \gamma_{\text{opt}}
            + h^2 \int \|\partial_W \cE(\rho_1, x) - \partial_W \cE(\rho_2, y) \|^2  \dd \gamma_{\text{opt}} \label{eq:contraction_intermediate}
    \end{multline}
    If the functional $\cE$ is $\lambda$-convex a.g.g., then the second term can be estimated as follows~\cite[Equation {10.1.8}]{ambrosio2005gradient}:
    \begin{equation}\label{eq:lambda_convex_fn}
            \int \langle \partial_W \cE(\rho_1, x) - \partial_W \cE(\rho_2, y), x - y\rangle \dd \gamma_{\text{opt}} \geq \frac{1}{2}\lambda W^2_2(\rho_1,\rho_2)
    \end{equation}
    To estimate the final term, we can define the generalization of the Lipschitz gradient property in the following way.
    \begin{definition}
        A functional $\cE: \Wspace \to \mathbb{R}$ has $L$-Lipschitz Wasserstein gradient for some subset $\cS \subseteq \Wspace$, if its Wasserstein gradient 
        \[
            \partial_W \cE(\rho) = \nabla \frac{\delta \cE}{\delta \rho} \in L_2(\rho)\ \forall \rho \in \cS
        \]
        and there exists a constant $L > 0$ such that $\forall \rho_1,\ \rho_2 \in \cS$
        \begin{equation}\label{eq:lipschitz_wg}
            \int \|\partial_W \cE(\rho_1, x) - \partial_W \cE(\rho_2, y) \|^2  \dd \gamma_{\text{opt}} \leq L^2 W^2_2(\rho_1, \rho_2)
        \end{equation}
    \end{definition}
    \begin{proposition}
        If the functional $\cE$ is $\lambda$-convex along generalized geodesics and has $L$-Lipschitz Wasserstein gradient, then the operator
        $\rExp_\rho(-h\partial_W\cE)$ is contractive for small enough $h$.
    \end{proposition}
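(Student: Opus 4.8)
The plan is to assemble the three pieces that the preceding development has already prepared and to read off the contraction factor as an explicit function of $h$. The starting point is the intermediate estimate \eqref{eq:contraction_intermediate}, which was obtained by coupling $G(\rho_1)$ and $G(\rho_2)$ through the explicit one-step map: a pair $(x,y)\sim\gamma_{\text{opt}}$ is transported to $(x',y')=(x-h\partial_W\cE(\rho_1,x),\,y-h\partial_W\cE(\rho_2,y))$, which is a valid (though not necessarily optimal) coupling of the two pushforwards, so that $W_2^2(G(\rho_1),G(\rho_2))\le\int\|x'-y'\|^2\,\dd\gamma_{\text{opt}}$. Expanding the square splits the right-hand side into three integrals, and the whole argument consists in bounding each of them separately.

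First I would identify the three terms. The leading term $\int\|x-y\|^2\,\dd\gamma_{\text{opt}}$ equals $W_2^2(\rho_1,\rho_2)$ exactly, because $\gamma_{\text{opt}}$ is the optimal plan. For the cross term I would invoke the $\lambda$-convexity along generalized geodesics via \eqref{eq:lambda_convex_fn}, which gives the lower bound $\int\langle\partial_W\cE(\rho_1,x)-\partial_W\cE(\rho_2,y),x-y\rangle\,\dd\gamma_{\text{opt}}\ge\tfrac12\lambda W_2^2(\rho_1,\rho_2)$; since this term enters \eqref{eq:contraction_intermediate} with the negative prefactor $-2h$, a lower bound here is precisely what yields an \emph{upper} bound on the transported distance. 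For the quadratic term I would apply the $L$-Lipschitz Wasserstein gradient property \eqref{eq:lipschitz_wg}, giving $\int\|\partial_W\cE(\rho_1,x)-\partial_W\cE(\rho_2,y)\|^2\,\dd\gamma_{\text{opt}}\le L^2 W_2^2(\rho_1,\rho_2)$.

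Combining the three estimates collapses the bound to a single scalar quadratic in $h$:
\[
    W_2^2(G(\rho_1),G(\rho_2))\le\bigl(1-h\lambda+h^2L^2\bigr)\,W_2^2(\rho_1,\rho_2).
\]
It then remains only to analyze $q(h)=1-h\lambda+h^2L^2$. Since $\lambda>0$, one checks that $q(h)<1$ precisely for $0<h<\lambda/L^2$, and that $q$ is minimized at $h^*=\lambda/(2L^2)$ with $q(h^*)=1-\lambda^2/(4L^2)\in(0,1)$ (using that the strong-convexity constant cannot exceed the gradient-Lipschitz constant, so $\lambda\le L$). Choosing any such $h$ therefore gives a genuine contraction with factor $\sqrt{q(h)}<1$, which is exactly the assertion.

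As for difficulties: within this proposition there is essentially no obstacle remaining, because the two structural hypotheses ($\lambda$-convexity and the Lipschitz gradient bound) have been absorbed into the assumptions, while the coupling identity \eqref{eq:contraction_intermediate} carries the geometric content. The genuinely technical work lives \emph{outside} this statement, namely in verifying the $L$-Lipschitz Wasserstein gradient property \eqref{eq:lipschitz_wg} for the concrete functionals of interest: for the Kullback--Leibler divergence the gradient $\partial_W\operatorname{KL}=(\Sigma^{-1}-\Sigma_*^{-1})x$ must be controlled uniformly over $B_r(\Sigma_*)$, which is where the eigenvalue bounds of \autoref{lem:lowerandupperboundsoneigenvalues} would enter to produce an admissible constant $L$.
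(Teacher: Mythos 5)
Your proposal is correct and takes essentially the same approach as the paper: both plug \eqref{eq:lambda_convex_fn} and \eqref{eq:lipschitz_wg} into the coupling estimate \eqref{eq:contraction_intermediate} to obtain the factor $1-\lambda h+h^2L^2$, which is below one exactly for $h\in\left(0,\frac{\lambda}{L^2}\right)$ with optimal step $h=\frac{\lambda}{2L^2}$. Your closing observation is also accurate: the substantive work is indeed deferred to verifying the $L$-Lipschitz Wasserstein gradient property for the concrete $\operatorname{KL}$ functional, which the paper carries out in the subsequent lemma using the eigenvalue bounds of \autoref{lem:lowerandupperboundsoneigenvalues}.
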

    \begin{proof}
        Plugging~\eqref{eq:lambda_convex_fn} and~\eqref{eq:lipschitz_wg} into~\eqref{eq:contraction_intermediate}, one gets 
        \begin{equation*}
            W^2_2(G(\rho_1), G(\rho_2)) \leq (1 - \lambda h + h^2L^2) W^2_2(\rho_1, \rho_2)
        \end{equation*}
        The coefficient in front of~$W^2_2(\rho_1, \rho_2)$ is smaller than one if  $h \in (0, \frac{\lambda}{L^2})$.
        In particular, the optimal step is $h = \frac{\lambda}{2L^2}$.
    \end{proof}

    In~\cite{ambrosio2005gradient}, there are multiple examples of $\lambda$-convex a.g.g. functionals.
    In particular, for the functional $\cE = \KL$, where $\rho_\infty = e^{-V}$,  $\cE$ is $\lambda$-convex when $V$ is $\lambda$-convex.
    The Lipschitz gradient property, up to the authors' best knowledge, hasn't been thoroughly studied before.
    In the following, we perform this analysis in the case of the  $\KL$ functional, constrained to the Bures-Wasserstein manifold.
    lets compute in case of Gaussians, $\rho_i = \cN(0, \Sigma_i),\ i= \overline{1,2}$.
    \begin{lemma}
        Let $\Sigma_*$ be a nondegenerate covariance matrix.
        The functional 
        \[
            \operatorname{KL}(\cdot | \Sigma_*):\ \BWspace \to \mathbb{R}^d
        \]
        has $L$-Lipschitz Wasserstein gradient on a ball $B_r(\Sigma_*)$ where
        \begin{equation}
            r < \frac{1}{16\sqrt{2}\left( 1 + \sqrt{\frac{\lambda^*_1}{\lambda^*_d}}\right)}\sqrt{\lambda_d^*}
        \end{equation}
        and 
        \begin{equation*}
            L^2 = \frac{4}{(\sqrt{\lambda^*_d} - r)^4} \left( 1 + 4\frac{{(\sqrt{\lambda^*_1} + 2r)^4} }{(\sqrt{\lambda^*_d} - r)^{4}} \right)
        \end{equation*}
    \end{lemma}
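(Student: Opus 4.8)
The plan is to reduce the Lipschitz-gradient inequality to a handful of matrix-norm estimates on the ball $B_r(\Sigma_*)$, all of which are made uniform through the eigenvalue bounds of \autoref{lem:lowerandupperboundsoneigenvalues}. By~\eqref{eq:KL_WG} the Wasserstein gradient is linear in the space variable, $\partial_W \operatorname{KL}(\Sigma_i|\Sigma_*)(z) = (\Sigma_i^{-1} - \Sigma_*^{-1})z$, so for any optimal coupling $\gamma_{\mathrm{opt}}$ of $\rho_1 = \cN(0,\Sigma_1)$ and $\rho_2 = \cN(0,\Sigma_2)$ the quantity to be controlled is
\[
    I := \int \left\| (\Sigma_1^{-1} - \Sigma_*^{-1})x - (\Sigma_2^{-1} - \Sigma_*^{-1})y \right\|^2 \dd\gamma_{\mathrm{opt}}(x,y).
\]
The starting point is the exact algebraic splitting
\[
    (\Sigma_1^{-1} - \Sigma_*^{-1})x - (\Sigma_2^{-1} - \Sigma_*^{-1})y = (\Sigma_1^{-1} - \Sigma_*^{-1})(x - y) + (\Sigma_1^{-1} - \Sigma_2^{-1})y,
\]
in which $\Sigma_*^{-1}$ cancels from the second summand. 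Applying $\|a+b\|^2 \le 2\|a\|^2 + 2\|b\|^2$ (with the constants calibrated at the end) and integrating isolates two conceptually distinct contributions.

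First I would bound the coupling term. Since $\gamma_{\mathrm{opt}}$ is optimal, $\int\|x-y\|^2\dd\gamma_{\mathrm{opt}} = W_2^2(\Sigma_1,\Sigma_2)$, whence
\[
    \int\|(\Sigma_1^{-1}-\Sigma_*^{-1})(x-y)\|^2\dd\gamma_{\mathrm{opt}} \le \|\Sigma_1^{-1}-\Sigma_*^{-1}\|_{\mathrm{op}}^2\, W_2^2(\Sigma_1,\Sigma_2).
\]
The second summand depends on $y$ only through its marginal $\rho_2$, so it is automatically dimension-free: using $\int\|My\|^2\dd\rho_2 = \Tr(M\Sigma_2 M)$ for the symmetric matrix $M = \Sigma_1^{-1}-\Sigma_2^{-1}$,
\[
    \int\|(\Sigma_1^{-1}-\Sigma_2^{-1})y\|^2\dd\gamma_{\mathrm{opt}} \le \|\Sigma_2\|_{\mathrm{op}}\,\|\Sigma_1^{-1}-\Sigma_2^{-1}\|_F^2.
\]
Thus $I$ is reduced to estimating the spectral quantities $\|\Sigma_1^{-1}-\Sigma_*^{-1}\|_{\mathrm{op}}$ and $\|\Sigma_2\|_{\mathrm{op}}$ together with the Frobenius norm $\|\Sigma_1^{-1}-\Sigma_2^{-1}\|_F$.

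The next step is to linearize the inverses and pass from the Frobenius norm to the Wasserstein distance. The resolvent identity $\Sigma_1^{-1}-\Sigma_2^{-1} = \Sigma_1^{-1}(\Sigma_2-\Sigma_1)\Sigma_2^{-1}$ (and its analogue with $\Sigma_*$) turns differences of inverses into differences of the matrices themselves, giving $\|\Sigma_1^{-1}-\Sigma_2^{-1}\|_F \le \|\Sigma_1^{-1}\|_{\mathrm{op}}\|\Sigma_2^{-1}\|_{\mathrm{op}}\|\Sigma_1-\Sigma_2\|_F$. The bounds \eqref{eq:lambda_min_W2_bound}--\eqref{eq:lambda_max_W2_bound} of \autoref{lem:lowerandupperboundsoneigenvalues} make every spectral factor uniform on $B_r(\Sigma_*)$, namely $\|\Sigma^{-1}\|_{\mathrm{op}} \le (\sqrt{\lambda_d^*}-r)^{-2}$ and $\|\Sigma\|_{\mathrm{op}}\le(\sqrt{\lambda_1^*}+r)^2$. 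Finally \autoref{lem:frobenius_by_bw} converts $\|\Sigma_1-\Sigma_2\|_F$ and $\|\Sigma_1-\Sigma_*\|_F$ into Wasserstein distances, so that $\|\Sigma_1-\Sigma_2\|_F \le 2\sqrt2(\sqrt{\lambda_1^*}+\sqrt{\lambda_d^*})W_2(\Sigma_1,\Sigma_2)$ and, since $\Sigma_1\in B_r(\Sigma_*)$, $\|\Sigma_1-\Sigma_*\|_F \le 2\sqrt2(\sqrt{\lambda_1^*}+\sqrt{\lambda_d^*})\,r$. Substituting these into the two contributions and collecting constants yields $I \le L^2 W_2^2(\Sigma_1,\Sigma_2)$ with $L^2$ of the stated form; the explicit radius condition on $r$ is precisely what is needed for the $O(r)$-small $\Sigma_*$-difference term to be dominated and for the leading coefficient to collapse to $4(\sqrt{\lambda_d^*}-r)^{-4}$.

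The main obstacle is keeping every estimate simultaneously \emph{dimension-free} and phrased in the \emph{Wasserstein} distance rather than the Frobenius norm. The two ingredients that make this work are the resolvent identity, which exposes a perturbation of the covariance (not of its inverse) and hence an $O(r)$ factor for the $\Sigma_*$-term, and the nontrivial Frobenius--Wasserstein comparison of \autoref{lem:frobenius_by_bw}, which rests on the dynamic optimal-transport formulation. The residual difficulty is pure bookkeeping: tracking the spectral constants through both summands and calibrating $r$ so that the $\Sigma_*$-difference contribution is absorbed and the final constant assumes the claimed closed form.
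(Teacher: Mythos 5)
Your proposal is correct and follows the paper's overall architecture exactly — the same algebraic splitting $(\Sigma_1^{-1}-\Sigma_*^{-1})(x-y)+(\Sigma_1^{-1}-\Sigma_2^{-1})y$, the same Young inequality, the same treatment of the coupling term via optimality of $\gamma_{\mathrm{opt}}$ and of the marginal term via $\Tr(M\Sigma_2 M)\leq\|\Sigma_2\|_o\|M\|_F^2$, and the same two supporting lemmata (the eigenvalue bounds and the Frobenius--Wasserstein comparison). The genuine difference is how you control $\|\Sigma_1^{-1}-\Sigma_2^{-1}\|_F$: you use the exact resolvent identity $\Sigma_1^{-1}-\Sigma_2^{-1}=\Sigma_1^{-1}(\Sigma_2-\Sigma_1)\Sigma_2^{-1}$, whereas the paper expands $\Sigma_2^{-1}=\Sigma_1^{-1}\left[\Id+\Delta\Sigma+R\right]$ as a Neumann series and must control the remainder $R$, which requires $\|(\Sigma_1-\Sigma_2)\Sigma_1^{-1}\|_F\leq\frac{1}{2}$. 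Your route is cleaner and in fact stronger: the stringent radius restriction $r<\frac{1}{16\sqrt{2}\left(1+\sqrt{\nicefrac{\lambda_1^*}{\lambda_d^*}}\right)}\sqrt{\lambda_d^*}$ exists in the paper \emph{solely} to guarantee convergence of that series, so with the resolvent identity the Lipschitz property holds on any ball with $r<\frac{1}{2}\sqrt{\lambda_d^*}$; your closing remark that the radius condition is ``precisely what is needed'' to absorb the $\Sigma_*$-term mischaracterizes its role — in your argument it is simply unnecessary (a crude bound such as $\|\Sigma_1^{-1}-\Sigma_*^{-1}\|_o\leq\max\{\|\Sigma_1^{-1}\|_o,\|\Sigma_*^{-1}\|_o\}$ already handles that term). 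One bookkeeping caveat: your second summand comes out as $16(\sqrt{\lambda_1^*}+r)^2(\sqrt{\lambda_1^*}+\sqrt{\lambda_d^*})^2(\sqrt{\lambda_d^*}-r)^{-8}$, which is \emph{larger} than the stated $16(\sqrt{\lambda_1^*}+2r)^4(\sqrt{\lambda_d^*}-r)^{-8}$ because $\sqrt{\lambda_d^*}>2r$; to land exactly on the claimed $L^2$ you must use the sharper constant $2\sqrt{2}(\sqrt{\lambda_1^*}+2r)$ that appears \emph{inside} the proof of the Frobenius--Wasserstein lemma (valid since the connecting curves stay in the $2r$-ball), rather than the weakened form $2\sqrt{2}(\sqrt{\lambda_1^*}+\sqrt{\lambda_d^*})$ in its statement — the paper itself quietly does this. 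So: correct qualitative result, simpler and more general argument, but a slightly weaker explicit constant unless you recalibrate that one step.
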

    \begin{proof}
    The idea of the proof is to work with the explicit form of~\eqref{eq:contraction_intermediate} and split the quantity under the integral w.r.t. $\mathrm{d}\gamma_{\text{opt}}$ in two parts.
    The first is ``linear part'' in form of $\|A(x - y)\|$ for some operator $A$, which can be directly related to the Bures-Wasserstein distance. 
    The second term is a ``nonlinear part'' $\|\Sigma^{-1}_1 -  \Sigma^{-1}_2\|_F$, which we treat by first considering the linearization w.r.t. $\|\Sigma_1 - \Sigma_2\|_F$, and then relating the norm difference to the Wasserstein distance using~\eqref{eq:w2_frobenius_bound}.

    Assuming that $\Sigma_1,\ \Sigma_2 \in B_{r}$ with $r < \frac{1}{2}\sqrt{\lambda_d^*}$, so that the previous lemmata can be used, we proceed with the calculation:
    \begin{multline}\label{eq:lipschitz_gradient_start}
        \int \|\partial_W \cE(\Sigma_1, x) - \partial_W \cE(\Sigma_2, y) \|^2  \dd \gamma_{\text{opt}} = \\ =
        \int \left\|(\Sigma_1^{-1} - \Sigma_*^{-1})x - (\Sigma_2^{-1} - \Sigma_*^{-1})y\right\|^2 \dd \gamma_{\text{opt}}=
        \int \left\|(\Sigma_1^{-1} - \Sigma_*^{-1})(x - y) + (\Sigma_1^{-1} - \Sigma_2^{-1})y\right\|^2 \dd \gamma_{\text{opt}} \leq \\ \leq
        2\|\Sigma_1^{-1} - \Sigma_*^{-1}\|_o^2 \int \|x - y\|^2 \dd \gamma_{\text{opt}} +
            2\int \left\|(\Sigma_1^{-1} - \Sigma_2^{-1})y\right\|^2 \dd \gamma_{\text{opt}} = \\ =
        2\|\Sigma_1^{-1} - \Sigma_*^{-1}\|_o^2 W^2_2(\Sigma_1, \Sigma_2) 
            + 2\left\|\Sigma_2^{\nicefrac{1}{2}}(\Sigma_1^{-1} - \Sigma_2^{-1})\right\|^2_{F} \leq \\ \leq
        2\|\Sigma_1^{-1} - \Sigma_*^{-1}\|_o^2 W^2_2(\Sigma_1, \Sigma_2) 
            + 2\|\Sigma_2\|_o \|\Sigma_1^{-1} - \Sigma_2^{-1}\|^2_{F} 
    \end{multline}
    where $\|\cdot\|_o$ and $\| \cdot \|_F$ stand for operator and Frobenius norm, respectively.

    Considering the quantity $\|\Sigma_1^{-1} - \Sigma_2^{-1}\|_F$ as $\|\Sigma_1 - \Sigma_2\|_F \to 0$.
    \begin{multline*}
        \Sigma_2^{-1} = 
        \left[\Sigma_1 - (\Sigma_1 - \Sigma_2) \right]^{-1} =
        \Sigma^{-1}_1\left[\Id - \underbrace{(\Sigma_1 - \Sigma_2)\Sigma^{-1}_1}_{:= \Delta \Sigma} \right]^{-1} = \\ =
        \Sigma^{-1}_1\left[\Id + \Delta\Sigma + \Delta\Sigma^2 + \dots \right] = 
        \Sigma^{-1}_1\left[\Id + \Delta\Sigma + R \right]
    \end{multline*}
    
    In order to bound the remainder of the series $R$, $\Sigma_1$ and $\Sigma_2$ should be sufficiently close.
    Let us find the exact condition before we proceed.
    Since $\Sigma_1,\ \Sigma_2$ are both symmetric
    \begin{multline*}
        \|\Delta \Sigma\|^2_F 
        = \|(\Sigma_1 - \Sigma_2)\Sigma^{-1}_1\|_F^2 = \\
        = \Tr\left(\Sigma^{-1}_1(\Sigma_1 - \Sigma_2)^2\Sigma^{-1}_1\right)
        = \Tr\left( (\Sigma^{-1}_1(\Sigma_1 - \Sigma_2))^T (\Sigma^{-1}_1(\Sigma_1 - \Sigma_2))\right) = \\
        = \|\Sigma^{-1}_1(\Sigma_1 - \Sigma_2) \|^2_F
    \end{multline*}
    where symmetry of the matrices and the cyclic property of the trace was used.
    Using~\eqref{eq:lambda_W2_bound} and~\eqref{eq:w2_frobenius_bound}
    \begin{multline*}
        \| \Delta \Sigma\|_F = 
        \| \Sigma^{-1}_1(\Sigma_1 - \Sigma_2) \|_F \leq \\
        \leq \|\Sigma^{-1}_1 \|_o \|\Sigma_1 - \Sigma_2 \|_F 
        \leq \frac{2\sqrt{2}(\sqrt{\lambda_1^*} + 2r)}{(\sqrt{\lambda_d^*} - r)^2}W_2(\Sigma_1, \Sigma_2) \leq \\
        \leq \frac{8\sqrt{2}(\sqrt{\lambda_1^*} + \sqrt{\lambda_d^*})}{\lambda^*_d}W_2(\Sigma_1, \Sigma_2)
    \end{multline*}
    since $r$ has to be less than $\frac{1}{2}\sqrt{\lambda_d^*}$.
    Thus, if
    \begin{equation*}
        W_2(\Sigma_1, \Sigma_2) 
        \leq \frac{1}{16\sqrt{2}\left( 1 + \sqrt{\frac{\lambda^*_1}{\lambda^*_d}}\right)}\sqrt{\lambda_d^*}
        \implies \|\Delta\Sigma\|_F \leq \frac{1}{2}
    \end{equation*}
    If $\|\Delta\Sigma\| \leq \frac{1}{2}$, the remainder of the series $R$ can be estimated as
    \begin{multline*}
        \| R\|_F = \left\|\Delta\Sigma^2 + \Delta\Sigma^3 + \dots \right\|_F \leq 
        \|\Delta\Sigma \|_F^2 + \|\Delta\Sigma \|_F^3 + \dots =\\= 
        \|\Delta\Sigma \|_F^2\left(1 + \|\Delta\Sigma \|_F + \|\Delta\Sigma \|_F^2 + \dots\right) = 
        \frac{\|\Delta\Sigma \|_F^2}{1 - \|\Delta\Sigma \|_F} \leq 2\|\Delta\Sigma \|_F^2 
    \end{multline*}
    Thus, we constrain the radius of the ball $r$ to be
    \begin{equation*}
        r 
        \leq \frac{1}{16\sqrt{2}\left( 1 + \sqrt{\frac{\lambda^*_1}{\lambda^*_d}}\right)}\sqrt{\lambda_d^*}
        \leq \frac{1}{32\sqrt{2}}\sqrt{\lambda^*_d}
    \end{equation*}
    Since $\frac{1}{32\sqrt{2}} < \frac{1}{2}$, the usage of~\eqref{eq:w2_frobenius_bound} was justified.
    Using this relation, we can get
    \begin{multline}\label{eq:taylor_2nd_ord}
        \|\Sigma_1^{-1} - \Sigma_2^{-1} \|^2_F = 
        \|\Sigma_1^{-1}(\Sigma_1 - \Sigma_2)\Sigma_1^{-1} + \Sigma_1^{-1}R\|^2_F \leq \\ \leq
        \| \Sigma_1^{-1}\|^2_o \|\Delta\Sigma + R\|^2_F \leq 
        2\|\Sigma_1^{-1}\|^2_o (\|\Delta\Sigma\|_F^2 + \|R\|_F^2) \leq \\ \leq
        2\|\Sigma_1^{-1}\|^2_o (\|\Delta\Sigma\|_F^2 + 4\|\Delta\Sigma\|_F^4) \leq 
        4\|\Sigma_1^{-1}\|^2_o \|\Delta\Sigma\|_F^2
    \end{multline} 
    Thus
    \[
        \|\Sigma_1^{-1} - \Sigma_2^{-1} \|^2_F \leq 
        \|\Sigma_1^{-1}\|_o^4 \|\Sigma_1 - \Sigma_2 \|^2_F \leq
        \frac{{8(\sqrt{\lambda^*_1} + 2r)^2} }{(\sqrt{\lambda^*_d} - r)^{8}} W^2_2(\Sigma_1 , \Sigma_2)
    \]
    We can finish the bound in \eqref{eq:lipschitz_gradient_start}
    \begin{multline}\label{eq:lipschitz_gradient_fin}
        \int \|\partial_W \cE(\Sigma_1, x) - \partial_W \cE(\Sigma_2, y) \|^2  \dd \gamma_{\text{opt}} \leq \\ \leq
        \left(2\|\Sigma_1^{-1} - \Sigma_*^{-1}\|_o^2 
            + 2\|\Sigma_2\|_o \frac{{8(\sqrt{\lambda^*_1} + 2r)^2} }{(\sqrt{\lambda^*_d} - r)^{8}} \right)W^2_2(\Sigma_1, \Sigma_2)  \leq \\ \leq
        \frac{4}{(\sqrt{\lambda^*_d} - r)^4} \left( 1
            + 4\frac{{(\sqrt{\lambda^*_1} + 2r)^4} }{(\sqrt{\lambda^*_d} - r)^{4}} \right)W^2_2(\Sigma_1, \Sigma_2)    
        \end{multline}
    \end{proof}

    \subsection{Derivation  of~\autoref{prop:projection_formula}}
    \begin{proof}\label{app:derivation_projection_formula}
    We extend scalar product~\eqref{eq:BW_scalar_prod} to the whole $\mathbb{R}^{d \times d}$ as $\langle U, V\rangle_\Sigma := \Tr(U^T\Sigma V)$ and pose the projection as the constrained minimization problem:
    \begin{align}
        \Pi_\Sigma(V) = &\arg\min_X \|X - V\|^2_\Sigma \\
            \text{s.t. }& X = X^T.
    \end{align}
    Writing out the Lagrangian and setting its derivatives to zero, one gets
    \begin{align*}
        L(X, \lambda) &= \Tr((X - V)^T\Sigma (X - V)) + \langle\lambda, X - X^T\rangle_F, \\
        \frac{\partial L}{\partial X} = 0 & \Rightarrow 
            2\Sigma X - 2\Sigma V = \lambda^T - \lambda, \\
        \frac{\partial L}{\partial \lambda} = 0 & \Rightarrow 
            X = X^T.
    \end{align*}
    Averaging the first equation and its transpose, with $X=X^T$, gives 
    \begin{equation*}
        \Sigma X + X \Sigma = \Sigma V + V^T \Sigma,
    \end{equation*}
    and setting $V = UT^{-1}_{01}$ yields \autoref{eq:BW_projection}.
    The unique solution exists if the eigenvalues of $\Sigma$ do not add up to zero:  
    $$
        \lambda_i + \lambda_j > 0\quad \forall (i,\ j),
    $$
    which always holds in our case since all the eigenvalues are positive. 
    See also~\cite{gajic2008lyapunov} for the overview of the numerical approaches. 
    \end{proof}
    \subsection{Proof of \autoref{them:Main}}
    \label{app:thm_main}

    \begin{lemma}[cf. Lemma 8 in \cite{li2023riemannian}]\label{lemm:boundonUpdate}
    Let $x_1,\dots,x_k\in \cB_{\mathcal N_0^d}(\Sigma_*,\frac{1}{2} r_{\Sigma_*})$. Then there is $M_1$ such that
    $$\|X_k\Gamma_k\|+\|r_k\|\leq M_1\sum_{j=0}^k\|r_{k-j}\|.$$
    
    \end{lemma}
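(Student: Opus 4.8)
The plan is to bound the two summands separately. The term $\|r_k\|$ is already one of the quantities on the right-hand side and simply contributes the final $+1$ to the constant $M_1$, so the whole difficulty lies in controlling $\|X_k\Gamma_k\|$. The strategy is to trade each transported increment $\dx{k-i}{k}$ for a pair of consecutive residual norms, using the three structural assumptions in turn: the bounded coefficients (\autoref{asmp:bounded_least_square}), the transport bound (\autoref{asmp:vt}), and the coercive lower Lipschitz bound (\autoref{asmp:F_lip}).

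First I would invoke \autoref{asmp:bounded_least_square} to pull out the coefficients,
\[
  \|X_k\Gamma_k\| \;\le\; \|\Gamma_k\|_\infty \sum_{i=1}^{m_k}\bigl\|\dx{k-i}{k}\bigr\| \;\le\; M_\Gamma \sum_{i=1}^{m_k}\bigl\|\dx{k-i}{k}\bigr\|,
\]
reducing the task to an estimate of each column. Reading off the recursion in \autoref{alg:RAM}, the vector $\dx{k-i}{k}$ is produced from $\dx{k-i}{k-i}\in\TspaceBW{x_{k-i}}$ by applying the one-step transport $i$ times, so \autoref{asmp:vt} contributes a factor $M$ per application and $\bigl\|\dx{k-i}{k}\bigr\| \le M^{i}\,\bigl\|\dx{k-i}{k-i}\bigr\|$. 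Since $\dx{k-i}{k-i}$ is exactly the tangent vector through which the retraction produces $x_{k-i+1}=\cR_{x_{k-i}}(\dx{k-i}{k-i})$, and since all iterates lie in the ball of the hypothesis where \eqref{eq:exponential_injective} applies, the exponential map is a local isometry and $\bigl\|\dx{k-i}{k-i}\bigr\| = W_2(x_{k-i+1},x_{k-i})$.

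Next I would convert this increment into residual norms. The lower bound in \autoref{asmp:F_lip}, applied to the consecutive iterates $x_{k-i+1}$ and $x_{k-i}$, gives $L_1\,W_2(x_{k-i+1},x_{k-i}) \le \bigl\|\dr{k-i}{k-i+1}\bigr\|$, because the fresh residual difference $\dr{k-i}{k-i+1}=-\bigl(F(x_{k-i+1})-\cT_{x_{k-i}}^{x_{k-i+1}}F(x_{k-i})\bigr)$ is precisely the quantity the coercivity assumption controls from below. A single triangle inequality together with \autoref{asmp:vt} then yields $\bigl\|\dr{k-i}{k-i+1}\bigr\|\le \|r_{k-i+1}\|+M\|r_{k-i}\|$. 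Chaining the three estimates,
\[
  \bigl\|\dx{k-i}{k}\bigr\| \;\le\; \frac{M^{i}}{L_1}\bigl(\|r_{k-i+1}\|+M\|r_{k-i}\|\bigr),
\]
and summing over $i=1,\dots,m_k$ while collecting, for each $\ell=0,\dots,m_k$, the coefficient multiplying $\|r_{k-\ell}\|$, produces a bounded geometric transport factor; tracking the two extreme cases $M\ge 1$ and $M\le 1$ gives the factor $\max\{M^{m_k}+M^{m_k-1},\,M^2+M\}$ recorded in the Remark. Since the omitted residuals are nonnegative, the window sum is dominated by $\sum_{j=0}^{k}\|r_{k-j}\|$, and adding back $\|r_k\|$ yields the claim with $M_1=\tfrac{M_\Gamma}{L_1}\max\{M^{m_k}+M^{m_k-1},M^2+M\}+1$.

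The main obstacle is the step where the coercive lower bound is invoked: \autoref{asmp:F_lip} is phrased with the exact parallel transport $\cP$, whereas $\dr{k-i}{k-i+1}$ is assembled with the vector transport $\cT$ actually used in \autoref{alg:RAM}. In the Gaussian setting this is harmless when $\cT$ is taken to be the closed-form parallel transport (so $\cP=\cT$), but for the one-step approximation one must either absorb the discrepancy $\bigl\|(\cP-\cT)F(x_{k-i})\bigr\|$ into the estimate or restate the lower bound directly for the transport in use; ensuring that this discrepancy does not destroy the lower bound is the only genuinely delicate point. A secondary technical check is that each step vector $\dx{k-i}{k-i}$ has norm below $\tfrac12 r$, so that the isometry \eqref{eq:exponential_injective} is applicable; this follows from the ball hypothesis together with the radius restriction $r<\tfrac12\sqrt{\lambda_d^*}$.
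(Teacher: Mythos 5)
Your argument follows essentially the same route as the paper's proof: bounded coefficients from \autoref{asmp:bounded_least_square}, a factor $M^i$ per transported column from \autoref{asmp:vt}, the exponential-map isometry identifying $\|\dx{k-i}{k-i}\|$ with $W_2(x_{k-i},x_{k-i+1})$, and then the coercive lower bound of \autoref{asmp:F_lip} plus a triangle inequality, arriving at the same constant $M_1$. The one place you deviate --- and the obstacle you flag as ``genuinely delicate'' --- is actually spurious: there is no need to route the coercivity step through the algorithmic difference $\dr{k-i}{k-i+1}$ (which is built with $\cT$) at all. The paper applies \autoref{asmp:F_lip} directly in the form $L_1 W_2(x_{k-i},x_{k-i+1}) \le \|F(x_{k-i+1}) - \cP_{x_{k-i}}^{x_{k-i+1}}F(x_{k-i})\|$ and then bounds the right-hand side by $\|r_{k-i+1}\| + \|r_{k-i}\|$ using the triangle inequality together with the fact that the parallel transport $\cP$ is an isometry; the residual differences stored by \autoref{alg:RAM} never enter this lemma (they only appear later, in the proof of \autoref{them:Main}). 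With that substitution your proof coincides with the paper's, the $\cP$-versus-$\cT$ discrepancy disappears, and so does the extraneous factor $M$ in your triangle inequality, which would otherwise give a slightly different (and mismatched) constant from the one you quote from the Remark.
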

    \begin{proof}
            We observe that $$\|\rExp_{x_\ell}^{-1}(x_{\ell+1})\| = W_2(x_\ell,x_{\ell+1})$$ by optimality of $\rExp^{-1}$. 
        Then by the Lipschitz assumption in \autoref{asmp:F_lip}, we have
        \begin{align*}
            \|\Delta x_{k-j}^{(k)}\| &= \|\mathcal T_{x_{k-1}}^{x_k}\cdots \mathcal T_{x_{x-j}}^{x_{k-j+1}}\rExp_{x_{k-j}}^{-1}(x_{k-j+1})\|\\
            &\leq M^j\|\rExp_{x_{k-j}}^{-1}(x_{k-j+1})\| \\
            &= M^jW_2(x_{k-j},x_{k-j+1})\\
            &\leq \frac{M^j}{L_1}\|r_{k-j+1}-\mathcal P_{x_{k-j}}^{x_{k-j+1}}r_{k-j}\|.
        \end{align*}

        It holds 
        \begin{align*}
            \|\sum_{j=1}^i\Delta x_{k-j}^{(k)}\|&\leq \sum_{k=1}^i\|\Delta x_{k-j}^{(k)}\|\leq \sum_{j=1}^i \frac{M^j}{L_1}(\|r_{k-j+1}\|+\|r_{k-1}\|)\\
            & \leq\frac M{L_1}\|r_{k}\|+\sum_{j=2}^i\frac{M^{{j-1}}(M+1)}{L_1}\|r_{k-j+1}\|+\frac{M^i}{L_1}\|r_{k-i}\|.
        \end{align*}
        Observe $ 2\frac{\max\{M^m,M\}}{L_1}\geq \frac 1{L_1}\max\{ M^m+M^{m-1}, M^2+M \}$. By \autoref{asmp:bounded_least_square} we have $\|\Gamma_k\|\leq M_\Gamma$. Hence,
        \begin{align*}
            \|X_k\Gamma_k\|+\|r_k\|&\leq \|\Gamma_k\|\sum_{j=1}^k\|\Delta x_{k-j}^{(k)}\|+\|r_k\|\\
            &\leq 2M_\Gamma \frac{\max\{M^m,M\}}{L_1}(\|r_k\|+\sum_{j=2}^k\|r_{k-j+1} \| + \|r_0\|)+\|r_k\|\\
            &\leq (2M_\Gamma \frac{\max\{M^m,M\}}{L_1}+1)\sum_{j=0}^k \|r_{k-j}\|.
        \end{align*}
\end{proof}
\begin{lemma}[cf. Proposition 2 in \cite{li2023riemannian}]\label{lemm:boundonUpdate2}
Let $x_1,\dots,x_k\in \cB_{\mathcal N_0^d}(\Sigma_*,\frac{r_{\Sigma_*}}{L_2(mM_1+\beta+1)})$. Let $z_k^i = \sum_{j=1}^i\Delta x_{k-j}^{(k)}$ and $y_k^i = \rExp_{x_k}(-z_k^i)$. Then there is $M_2,M_3>0$ such that
$$\|\mathcal P_{y_k^i}^{x_k}F(y_k^i)-\mathcal T_{x_{k-1}}^{x_k}\mathcal T_{x_{k-2}}^{x_{k-1}}\cdots\mathcal T_{x_{k-i}}^{x_{k-i+1}}F(x_{k-i})\|\leq M_2\sum_{j=0}^i\|r_{k-j}\|^2.$$
Furthermore, for $w_{k}^i = \sum_{j=0}^i\gamma_j^k\Delta x_{k-j}^{(k)}$ and $v_k^i = \rExp_{x_k}(-w_k^i)$ it holds
$$\|\mathcal P_{v_k^i}^{x_k}F(v_k^i)-\mathcal P_{v_k^{i-1}}^{x_k}F(v_k^{i-1})-\gamma_i^k(\mathcal P_{y_k^i}^{x_k}F(y_k^i)-\mathcal P_{yk^{i-1}}^{x_k}F(y_k^{i-1}))\|\leq M_3\sum_{j=0}^i\|r_{k-j}\|^2.$$
    
\end{lemma}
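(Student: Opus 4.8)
The plan is to treat both estimates as second-order Taylor-type statements about the single map
$$\Phi_k(z) := \mathcal P_{\rExp_{x_k}(-z)}^{x_k}\, F\!\left(\rExp_{x_k}(-z)\right), \qquad z \in \TspaceBW{x_k},$$
which is well defined and $C^1$ near the origin thanks to the explicit exponential~\eqref{eq:rExp_gaussian}, the isometry property~\eqref{eq:exponential_injective}, and the smoothness in \autoref{asmp:F_lip}. With this notation $\Phi_k(z_k^i)=\mathcal P_{y_k^i}^{x_k}F(y_k^i)$, $\Phi_k(w_k^i)=\mathcal P_{v_k^i}^{x_k}F(v_k^i)$, and $\Phi_k(0)=F(x_k)=-r_k$. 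As a preliminary step, the computation inside \autoref{lemm:boundonUpdate} (together with \autoref{asmp:vt} and \autoref{asmp:bounded_least_square}) gives $\|z_k^i\|,\|w_k^i\|\leq M_1\sum_{j=0}^{i}\|r_{k-j}\|$, so that under the radius hypothesis all of $y_k^i,v_k^i$ stay in $B_r(\Sigma_*)$ and the geometric estimates of \autoref{them:super_main} apply with uniform constants.

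For the first bound I would first establish an \emph{approximate telescoping}, namely $W_2(y_k^i,x_{k-i})\leq C\sum_{j=0}^{i}\|r_{k-j}\|^2$, by induction on $i$. In the idealized case of exact parallel transport and exponential map one has the geodesic-reversal identity $\rExp_{x_k}\!\big(-\mathcal P_{x_{k-1}}^{x_k}\rExp_{x_{k-1}}^{-1}(x_k)\big)=x_{k-1}$, and the analogous telescoping of $z_k^i$ lands exactly on $x_{k-i}$. The discrepancy is due to replacing $\mathcal P$ by the vector transport $\mathcal T$ of \autoref{prop:projection_formula}; since $\mathcal T$ is $C^1$ and agrees with parallel transport to first order, each replacement contributes an error of order $W_2(x_{k-j},x_{k-j+1})^2$, made uniform by the curvature bound of \autoref{them:super_main}, and $W_2(x_{k-j},x_{k-j+1})\leq \tfrac1{L_1}(\|r_{k-j}\|+\|r_{k-j+1}\|)$ from \autoref{asmp:F_lip} turns squared steps into $\sum\|r_{k-j}\|^2$. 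Writing $\Theta_k^i:=\mathcal T_{x_{k-1}}^{x_k}\cdots\mathcal T_{x_{k-i}}^{x_{k-i+1}}$ for the vector-transport chain, I then split
$$\Phi_k(z_k^i)-\Theta_k^i F(x_{k-i})=\mathcal P_{y_k^i}^{x_k}\big[F(y_k^i)-\mathcal P_{x_{k-i}}^{y_k^i}F(x_{k-i})\big]+\big[\mathcal P_{y_k^i}^{x_k}\mathcal P_{x_{k-i}}^{y_k^i}-\Theta_k^i\big]F(x_{k-i}).$$
The first bracket is bounded by $L_2\,W_2(y_k^i,x_{k-i})=\mathcal O(\sum\|r_{k-j}\|^2)$ via the upper Lipschitz bound of \autoref{asmp:F_lip} (and isometry of $\mathcal P$), while the second is a transport-mismatch operator of order $W_2^2$ applied to $F(x_{k-i})=-r_{k-i}$, hence of even higher order; this produces $M_2$.

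For the second bound I would Taylor-expand $\Phi_k$ to first order. Expressing both increments through the fundamental theorem of calculus along the segments $[w_k^{i-1},w_k^i]$ and $[z_k^{i-1},z_k^i]$ and using $w_k^i-w_k^{i-1}=\gamma_i^k\,\dx{k-i}{k}=\gamma_i^k\,(z_k^i-z_k^{i-1})$, the first-order contributions $D\Phi_k(\cdot)[\dx{k-i}{k}]$ cancel after multiplication by $\gamma_i^k$, leaving only the variation of the differential $D\Phi_k$. That differential is controlled by the Jacobian $H$, so the $L_H$-Lipschitz bound of \autoref{asmp:F_lip} gives $\|D\Phi_k(a)-D\Phi_k(b)\|=\mathcal O(\|a-b\|)$; since all partial sums sit within $\mathcal O(\sum\|r_{k-j}\|)$ of the origin, the remainder is of order $|\gamma_i^k|\,\|\dx{k-i}{k}\|^2$, which is $\leq M_\Gamma\,\mathcal O(\sum\|r_{k-j}\|^2)$ by \autoref{asmp:bounded_least_square}, yielding $M_3$.

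The main obstacle is the approximate telescoping underlying the first bound: controlling the accumulated second-order error of substituting the one-step vector transport for parallel transport, uniformly along the chain and over $B_r(\Sigma_*)$. This is exactly where the curvature bound and the isometry property~\eqref{eq:exponential_injective} of \autoref{them:super_main} are indispensable, since they allow one to pass freely between $W_2$-distances, tangent-vector norms, and the exponential chart with uniformly bounded constants; without such uniformity the per-step remainders could not be summed into a clean $\mathcal O(\sum_{j}\|r_{k-j}\|^2)$ estimate.
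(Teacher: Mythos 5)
The first thing to note is that the paper does not actually prove this lemma: it is stated with the annotation ``cf.\ Proposition 2 in \cite{li2023riemannian}'' and imported wholesale from that reference, and the appendix proof of \autoref{them:Main} simply invokes it. So there is no in-paper argument to compare against; what you have written is a reconstruction of the proof strategy of the cited work, and structurally it is the right one: pulling $F$ back to $\TspaceBW{x_k}$ through the exponential map and parallel transport, an approximate-telescoping estimate with per-step second-order errors for the first bound, and a fundamental-theorem-of-calculus argument exploiting $w_k^i-w_k^{i-1}=\gamma_i^k(z_k^i-z_k^{i-1})$ so that first-order terms cancel for the second bound, with all constants made uniform via the compactness, curvature and injectivity statements of \autoref{them:super_main}. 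This is exactly how the reference proceeds, and your identification of where the curvature bound and \eqref{eq:exponential_injective} enter is correct.

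Two caveats, both at the level of sketch-incompleteness rather than wrong turns. First, your claim that the transport-mismatch operator $\mathcal P_{y_k^i}^{x_k}\mathcal P_{x_{k-i}}^{y_k^i}-\Theta_k^i$ is of order $W_2^2$ over-claims: \autoref{asmp:vt} only grants $C^1$ regularity and a norm bound, and for the trivial transport $\Id_{\Sigma_1}^{\Sigma_2}$ in particular the deviation from parallel transport is genuinely first order in $W_2$. The estimate nevertheless survives, because the mismatch is applied to $F(x_{k-i})$, whose norm $\|r_{k-i}\|$ supplies the missing factor, so the product is still $\mathcal O\bigl(\sum_j\|r_{k-j}\|^2\bigr)$; you should make this the stated mechanism rather than second-order agreement of the transports. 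Second, the Lipschitz continuity of $D\Phi_k$ is not an immediate consequence of the $L_H$ bound in \autoref{asmp:F_lip}: the differential of your pulled-back field involves, besides $H$, derivatives of the exponential map and of the transport (Jacobi-field--type corrections), and bounding these uniformly on $B_r(\Sigma_*)$ is precisely the technical content of Proposition 2 and the comparison lemma (Lemma 5) of \cite{li2023riemannian} that the paper's proof of \autoref{them:Main} also cites. The same comparison machinery is what makes your telescoping induction rigorous. None of this breaks your argument --- in the Bures--Wasserstein setting the explicit formulas \eqref{eq:rExp_gaussian} and \eqref{eq:BW_projection} make these bounds available --- but a complete proof would have to carry these steps out rather than assert them.
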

    

Now let us prove \autoref{them:Main}
        \begin{proof}\label{proof:Main}
            First we assume there are some $x_1,\dots,x_k\in \cB_{\mathcal N_0^d}(\Sigma_*,\tilde r)$ with $\tilde r< \frac{r_{\Sigma_*}}{2L_2(mM_1+\beta+1)}$.
        Define
        $$\begin{cases}
            \bar x_k = \rExp_{x_k}(-X_k\Gamma_k)\\
             x_{k+1} = \rExp_{x_k}(-X_k\Gamma_k+\beta_k\bar{r_k})\\
            \tilde x_{k+1} = \rExp_{x_k}(-X_k\Gamma_k+\bar { r_k})   
        \end{cases}$$
        with $\bar r_k = r_k - R_k\Gamma_r$. Then $\|\bar r_k\|\leq \|r_k\|$. 

        We observe \begin{align*}
            \|F(x_{k+1})\|\leq &L_2 W_2(x_{k+1},x_*)\\
            \leq & L_2(W_2(x_{k+1},x_k)+W_2(x_k,x_*))\\
            \leq& L_2(W_2(x_{k+1},x_k)+\tilde r)\\
            \leq& L_2(\|X_k\Gamma_k+\beta\bar r_k\|+\tilde r)\\
            \leq & L_2(M_1 (\sum_{i=0}^{\min\{k,m\}}\|r_{k-i}\|+\beta\|r_k\|)+\tilde r)\\
            \leq & L_2( M_1m\tilde r+\beta\tilde r+\tilde r)\\
            \leq &L_2(M_1m+\beta+1)\tilde r\leq r_{\Sigma_*}
        \end{align*}
Similarly, $W_2(\bar x_k,x^*),W_2(\tilde x_k,x^*)\leq L_2(M_1m+\beta+1)\tilde r. $
        
        Hence, with \autoref{lemm:boundonUpdate}  we get 
        \begin{align*}
        \| r_{k+1}\| =& W_2(G(x_{k+1}),x_{k+1})  \\
        \leq & W_2(G(x_{k+1}),G(\bar x_k))+W_2(G(\bar x_k),x_{k+1}).
        \end{align*}
        By assumption,
        $$W_2(G(x_{k+1}), G(\bar x_k))\leq \kappa W_2(\rExp_{x_k}(X_k\Gamma_k),\rExp_{x_k}(X_k\Gamma_k+\beta_k\bar { r_k}))  \leq\kappa \|\beta_k\bar{ r_k}\|.$$
        Now
        $$W_2(x_{k+1}, G(\bar x_k))\leq W_2(G(\bar x_k),\tilde x_{k+1})+W_2(\tilde x_{k+1},x_{k+1}).$$
        By assumption 
        $$W_2(\tilde x_{k+1},x_{k+1})\leq \|X_k\Gamma_k-\beta_k\bar { r_k} - X_k\Gamma_k+\bar{ r_k}\|=(1-\beta_k)\|\bar { r_k}\|.$$
        Also
        $$W_2(G(\bar x_k),\tilde x_{k+1})\leq W_2(G(\bar x_k),\rExp_{\bar x_k}(\mathcal T_{x_k}^{\bar x_k} \bar { r_k}))+W_2(\rExp_{\bar x_k}(\mathcal T_{x_k}^{\bar x_k} \bar { r_k}), \tilde x_{k+1}).$$
        By Lemma 5 in \cite{li2023riemannian} there is a constant $c_0$ depending on the sectional curvature such that $$W_2(\rExp_{\bar x_k}(\mathcal T_{x_k}^{\bar x_k} \bar { r_k}), \tilde x_{k+1})\leq c_0\underbrace{\min\{\|X_k\Gamma_k\|,\|\bar r_k\|\}}_{\leq \tilde r}(\|X_k\Gamma_k\|+\|\bar r_k\|)^2.$$
       With \autoref{lemm:boundonUpdate} we get $$(\|X_k\Gamma_k\|+\|\bar r_k\|)^2\leq mM_1^2 \sum_{j=0}^k\|r_{k-j}\|^2.$$
        Furthermore,
        $$W_2(G(\bar x_k),\rExp_{\bar x_k}(\mathcal T_{x_k}^{\bar x_k} \bar { r_k})) =W_2(\rExp_{\bar x_k}(-F(\bar r_k)),\rExp_{\bar x_k}(\mathcal T_{x_k}^{\bar x_k}\bar { r_k})) \leq \|-F(\bar x_k)-\mathcal T_{x_k}^{\bar x_k}\bar{ r_k}\|.$$
        Defining $w_k^i = \sum_{j=0}^i \gamma_j^k \Delta x_{k-j}^{(k)}$, $z_{j}^k =\sum_{j=0}^i  \Delta x_{k-j}^{(k)}, v_k^i=\rExp_{x_k}(-w_k^i)$ and $y_k^i = \rExp_{x_k}(-z_k^i) $ we get by \autoref{lemm:boundonUpdate2}
        \begin{align*}
            \|-F(\bar x_k)-\mathcal T_{x_k}^{\bar x_k}\bar{ r_k}\| & = \|\mathcal T^{x_k}_{\bar x_k}F(\bar x_k)+\bar{ r_k}\|\\
            &=\|\mathcal T^{x_k}_{\bar x_k}F(\bar x_k)-F(x_k)-\sum_{i=1}^k\gamma_i^k\Delta r_{k-i}\|\\
            &\leq\underbrace{\|\mathcal P_{v_k^i}^{x_k}F(v_k^i)-\mathcal P_{v_k^{i-1}}^{x_k}F(v_k^{i-1})-\gamma_i^k(\mathcal P_{y_k^i}^{x_k}F(y_k^i)-\mathcal P_{yk^{i-1}}^{x_k}F(y_k^{i-1}))\|}_{\leq M_3\sum_{j=0}^i\|r_{k-j}\|^2}\\
            &\quad +\underbrace{\sum_{i=1}^{k}|\gamma_i^k|\|-\Delta r_{k-i} + \mathcal P_{y_i^k}^{x_k}F(y_i^k)-\mathcal P_{y_k^{i-1}}^{x_k} F(y_k^{i-1})\|}_{\substack{\leq M_\Gamma(\|\mathcal T_{x_{k-1}}^{x_k}\cdots \mathcal T_{x_{k-i+1}}^{x_{k-i+2}} r_{k-i+1} +\mathcal P_{{y_k}^{i-1}}^{x_k} F(y_k^{i-1})\|\\+\|\mathcal T_{x_{k-1}}^{x_k}\cdots \mathcal T_{x_{k-i}}^{x_{k-i+1}} r_{k-i} +\mathcal P_{{y_k}^{i}}^{x_k} F(y_k^{i})\|)}}\\
            &\leq (M_3+2M_\Gamma M_2)\sum_{i=0}^k \|r_{k-i}\|^2.
        \end{align*}
        All-together, we get
        $$ \|r_{k+1}\|\leq (1-\beta_k+\kappa\beta_k)\|\bar r_k\|+(c_0\tilde rm M_1^2+M_3+2M_\Gamma M_2)\sum_{i=0}^k\|r_{k-i}\|^2.$$
        
        Now we want to show that the history produced by the algorithm fulfills these arguments. To that end we use induction.
        Define $\hat r=\min\{\frac{r_{\Sigma_*}}{L_2(mM_1+\beta+1)},\frac{L_1-L_2+(1-\kappa)\beta L_2}{MmL_2^2}\}$.
        If $W_2(x_0,x_*)< \frac{\hat r}{1+L_2}$ we get 
        $$\|F(x_0)\|\leq L_2 W_2(x_0,x_*)\leq \frac{L_2}{1+L_2}\hat r$$ and 
        \begin{align*}
            W_2(x_1,x_*)&\leq W_2(x_1,x_0)+W_2(x_0,x_*)\\
            &\leq \underbrace{\|\rExp^{-1}_{x_0}(x_1)\|}_{=\|r_0\|}+\frac 1 {1+L_2} \hat r\\
            &\leq \hat r.
        \end{align*}
        Assume now that the assumption of the theorem is true for all $j\leq k$. Then by above considerations we have 
        \begin{align*}
            \|r_{k+1}\|&\leq (1-\beta_k+\kappa\beta_k)\|r_k\|+M\sum_{j=0}^k\|r_{k-j}\|^2\\
            &\leq (1-\beta_k+\kappa\beta_k)L_2\hat r+MmL_2^2\hat r^2\\
            &\leq L_1 \hat r.
        \end{align*}
        Then we have $$W_2(x_{k+1},x_*)\leq \frac 1 {L_1} \|r_{k+1}\|\leq  \hat r$$ and the claim follows by induction.
    \end{proof}

\section{Additional numeric results}
    For all the mentioned problems, a comprehensive set of numerical experiments has been conducted.
    We study the performance of the method depending on both the problem settings and the hyperparameters of the numeric solver.
    The selection of the former will be described for each problem individually.
    As for the method hyperparameters, for the BWRAM method we vary the relaxation parameter $\beta_k$, the maximal number of history vectors $m$, and the regularization parameter $l_{\infty, max}$. 
    The number of historic vectors varies in the range from $1$ to $15$.
    In the current work, these are constant during the iterations.
    In principle, adaptive strategies for restarting the iteration~\cite{novak2023adaptive,wei2023convergence} or selection of the relaxation parameter~\cite{irons1969version}, but these strategies are outside of the scope of the current work.

    For the Ornstein-Uhlenbeck process and $\operatorname{KL}$ minimization, we consider multiple invariant distributions with covariance matrix being {i.i.d} realizations of the following random variable: 
    \begin{align*}
        \Sigma_* &=  Q^T D Q, \quad
        D = \operatorname{diag}\left(\sigma_{\min} + \frac{i - 1}{d - 1}(\sigma_{\max} - \sigma_{\min}) \right),\quad
        Q\text{ i.i.d. random orthogonal matrix}.
    \end{align*}
    We fix $\sigma_\textrm{min}$ and vary $d$ and $\sigma_\textrm{max}$.
    For each pair of $d,\ \sigma_\textrm{max}$, we run the Picard iteration and Anderson acceleration with different lengths of history $m$.
    The acceleration measured as $\frac{N_\text{Picard}}{N_\text{AA}}$ is averaged over several independent realizations of $Q$.

    For the Ornstein-Uhlenbeck problem, the relaxation parameter and regularization parameter are set to be $\beta = 1.0$ and $l_{\infty,max} = 1.0$.
    In \autoref{tab:OU_acceleration}, one can see the maximal mean acceleration (averaged over $6$ independent runs) of BWRAM, compared to Picard.

    For the $\operatorname{KL}$ minimization problem, where the operator takes form
    \[
        G(\Sigma) = \rExp_\Sigma(-h\partial_W \operatorname{KL}(\Sigma| \Sigma_*)),
    \]
    the scaling parameter is set to $h=0.3$, and the values of $d,\ \sigma_{\textrm{min}},\ \sigma_{\textrm{max}}$ are the same as in the Ornstein-Uhlenbeck problem. 
    As for the hyperparameters of BWRAM, the regularization parameter takes values  $l_{\inf,\max} \in  \{ 1.5, 10.0\}$.
    In case of fixed-point problems, with an operator defined by the Wasserstein gradient of some functional $\cE$ as $\rExp(-h\partial_W \cE)$, the optimal stepsize parameter $h$ may have different values for the basic Picard method and for RAM.
    One can compensate for that by choosing the relaxation $\beta_k$ factor smaller or larger than one.
    We demonstrate it by varying the relaxation factor in the $\operatorname{KL}$ problem in the set $\beta \in \{0.9, 1.0, 1.2\}$.
    Maximal acceleration is achieved for $\beta = 1.2$ in most cases.

    We also compare our method to established methods of Riemannian optimization.
    In particular, we consider Riemannian gradient descent (RGD) with backtracking line search, and Riemannian conjugate gradient descent, provided by the~\texttt{pymanopt}~\cite{townsend2016pymanopt} package.
    The results of the experiments are presented in \autoref{tab:KL_n_steps_by_method}.
    Here, the number of iterations is first averaged over $n = 6$ runs for random $\Sigma_*$, and then the minimal one is chosen among different values of the hyperparameters.
    The best result for each problem is marked bold.
    \begin{table}[ht]
            \centering
            \caption{The mean number of steps for the $\operatorname{KL}$ minimization problem for each method.}
            \begin{tabular}{llrrrr}
\toprule
 &  & Picard & RGD & BWRAM & RCG \\
$d$ & $\sigma_{max}$ &  &  &  &  \\
\midrule
\multirow[c]{3}{*}{4} & 5.0 & 45.0 & 53.0 & \bfseries 11.0 & 25.0 \\
 & 10.0 & 95.0 & 113.0 & \bfseries 15.0 & 42.0 \\
 & 20.0 & 196.0 & 162.0 & \bfseries 18.2 & 56.0 \\
\cline{1-6}
\multirow[c]{3}{*}{8} & 5.0 & 45.0 & 39.0 & \bfseries 14.0 & 33.0 \\
 & 10.0 & 96.0 & 124.0 & \bfseries 22.8 & 53.0 \\
 & 20.0 & 197.0 & 243.0 & \bfseries 38.6 & 68.0 \\
\cline{1-6}
\multirow[c]{3}{*}{16} & 5.0 & 46.0 & 64.0 & \bfseries 14.6 & 29.0 \\
 & 10.0 & 97.0 & 109.0 & \bfseries 24.2 & 39.0 \\
 & 20.0 & 200.0 & 230.0 & \bfseries 41.6 & 69.0 \\
\cline{1-6}
\multirow[c]{3}{*}{32} & 5.0 & 47.0 & 51.0 & \bfseries 15.0 & 31.0 \\
 & 10.0 & 100.0 & 110.0 & \bfseries 22.8 & 42.0 \\
 & 20.0 & 206.0 & 265.0 & \bfseries 42.6 & 66.0 \\
\cline{1-6}
\multirow[c]{3}{*}{64} & 5.0 & 49.0 & 58.0 & \bfseries 15.8 & 31.0 \\
 & 10.0 & 103.0 & 117.0 & \bfseries 22.4 & 57.0 \\
 & 20.0 & 212.0 & 233.0 & \bfseries 42.8 & 68.0 \\
\cline{1-6}
\multirow[c]{3}{*}{128} & 5.0 & 51.0 & 51.0 & \bfseries 17.0 & 30.0 \\
 & 10.0 & 107.0 & 124.0 & \bfseries 23.6 & 54.0 \\
 & 20.0 & 220.0 & 273.0 & \bfseries 42.4 & 74.0 \\
\cline{1-6}
\bottomrule
\end{tabular}

            \medskip
            \label{tab:KL_n_steps_by_method}
    \end{table}

    In case of the averaging problems (Barycenter, Entropic Barycenter, Median), the parameters of the invariant distributions are the dimension $d$ and the number of distributions $n_\sigma$, taking values from  sets $d \in \{3, 5, 10, 20, 50 \}$ and $\{3, 5, 10, 20  \}$, respectively.
    The number of iterations for each method is averaged for $5$ random initializations of the distributions $\Sigma_i$, which are i.i.d., drawn from the Wishart distribution $W(\Id, d)$.
    The entropic regularization parameter in the Entropic Barycenter problem is set to $\gamma = 0.001$ and the smoothing parameter in the Median problem is set to $\varepsilon = 0.001$.
    In all three experiments, the iteration proceeds until the cost converges to the minimum up to the tolerance of $\epsilon = 10^{-10}$.
    As for the hyperparameters of the method, for Barycenters and Entropic barycenters, they are the same as for the $\operatorname{KL}$ problem.
    For the Median problem, a broader range of relaxations and regularizations was considered, namely $l_{\inf,\max} \in \{0.1,\ 5.0,\ 15.0 \}$ and $\beta_k \in \{1.0,\ 5.0,\ 10.0 \}$.
    The mean number of steps taken by each method with the best set of hyperparameters, is given in~\autoref{tab:Averaging_n_steps_by_method}. 
    We can see that BWRAM always provided acceleration compared to plain Picard iteration, in fact more that an order of magnitude in certain cases.
    Riemannian methods are also outperformed in most cases, the exception being the results for the Median problem in dimension $d = 50$, where RGD is superior.
    We note however, that the performance of BWRAM in that case is still comparable.

    \renewcommand{\arraystretch}{1.2} 
    \setlength{\tabcolsep}{3pt} 
%
    \begin{table}[htbp]
        \centering
        \caption{Mean number of iterations for the three averaging problems: Barycenter, Entropic Barycenter and Median. The best result for each pair of $(d,\ n_\sigma)$ is given in bold.} 
        \begin{tabular}{llrrrrrrrrrrrr}
\toprule
 &  & \multicolumn{4}{r}{Barycenter} & \multicolumn{4}{r}{Entropic Barycenter} & \multicolumn{4}{r}{Median} \\
 &  & BWRAM & Picard & RCG & RGD & BWRAM & Picard & RCG & RGD & BWRAM & Picard & RCG & RGD \\
$d$ & $n_{\sigma}$ &  &  &  &  &  &  &  &  &  &  &  &  \\
\midrule
\multirow[c]{4}{*}{3} & 3 & \bfseries 5.4 & 6.2 & 31.6 & 14.4 & \bfseries 4.4 & 6.6 & 15.8 & 8.8 & \bfseries 12.8 & 27.0 & 31.0 & 30.3 \\
 & 5 & \bfseries 7.0 & 9.8 & 29.0 & 16.6 & \bfseries 5.8 & 10.6 & 12.2 & 9.0 & \bfseries 13.8 & 26.5 & 24.5 & 21.5 \\
 & 10 & \bfseries 6.8 & 8.6 & 28.6 & 15.8 & \bfseries 4.8 & 10.4 & 16.4 & 9.8 & \bfseries 13.4 & 23.3 & 22.4 & 15.8 \\
 & 20 & \bfseries 6.6 & 9.0 & 26.2 & 15.2 & \bfseries 5.0 & 9.2 & 13.4 & 10.0 & \bfseries 13.3 & 22.9 & 26.1 & 16.3 \\
\cline{1-14}
\multirow[c]{4}{*}{5} & 3 & \bfseries 7.4 & 9.8 & 32.4 & 18.0 & \bfseries 6.4 & 8.8 & 14.2 & 9.0 & \bfseries 23.2 & 72.1 & 32.5 & 33.3 \\
 & 5 & \bfseries 7.2 & 9.6 & 34.8 & 18.2 & \bfseries 5.6 & 8.2 & 15.8 & 9.8 & \bfseries 13.5 & 46.1 & 24.5 & 17.2 \\
 & 10 & \bfseries 7.4 & 10.6 & 38.4 & 18.8 & \bfseries 6.0 & 9.0 & 17.0 & 10.6 & \bfseries 11.2 & 41.8 & 21.3 & 16.6 \\
 & 20 & \bfseries 7.2 & 10.0 & 36.0 & 19.0 & \bfseries 5.0 & 9.4 & 18.4 & 10.8 & \bfseries 10.0 & 39.3 & 22.1 & 14.8 \\
\cline{1-14}
\multirow[c]{4}{*}{10} & 3 & \bfseries 8.6 & 13.6 & 26.4 & 19.2 & \bfseries 7.0 & 14.2 & 9.0 & 8.6 & \bfseries 11.9 & 120.9 & 30.8 & 25.2 \\
 & 5 & \bfseries 9.0 & 13.8 & 33.4 & 20.4 & \bfseries 6.2 & 13.2 & 14.0 & 11.2 & \bfseries 10.3 & 94.1 & 26.7 & 18.7 \\
 & 10 & \bfseries 8.8 & 13.8 & 40.6 & 22.6 & \bfseries 6.6 & 11.0 & 15.6 & 12.6 & \bfseries 10.4 & 89.7 & 22.7 & 15.6 \\
 & 20 & \bfseries 8.0 & 11.2 & 39.4 & 22.0 & \bfseries 5.4 & 11.4 & 16.2 & 11.0 & \bfseries 9.8 & 82.8 & 16.5 & 13.5 \\
\cline{1-14}
\multirow[c]{4}{*}{20} & 3 & \bfseries 10.6 & 20.6 & 34.6 & 23.8 & \bfseries 7.8 & 23.2 & 11.2 & 11.4 & \bfseries 11.8 & 245.7 & 28.5 & 23.9 \\
 & 5 & \bfseries 11.2 & 19.0 & 36.6 & 21.6 & \bfseries 7.0 & 23.8 & 12.2 & 11.0 & \bfseries 11.3 & 206.0 & 30.5 & 19.7 \\
 & 10 & \bfseries 10.0 & 17.0 & 44.6 & 22.8 & \bfseries 7.0 & 19.4 & 15.2 & 12.2 & \bfseries 10.8 & 192.8 & 27.5 & 18.5 \\
 & 20 & \bfseries 8.8 & 13.4 & 44.8 & 21.8 & \bfseries 6.8 & 15.8 & 18.0 & 12.4 & \bfseries 10.6 & 177.5 & 14.8 & 14.5 \\
\cline{1-14}
\multirow[c]{4}{*}{50} & 3 & \bfseries 13.4 & 26.4 & 37.2 & 30.8 & \bfseries 8.6 & 23.4 & 16.0 & 14.0 & 28.4 & 707.5 & 32.2 & \bfseries 28.0 \\
 & 5 & \bfseries 12.8 & 23.8 & 50.2 & 29.2 & \bfseries 8.6 & 28.8 & 21.8 & 14.6 & 26.5 & 601.3 & 35.1 & \bfseries 22.8 \\
 & 10 & \bfseries 11.2 & 18.6 & 44.4 & 27.4 & \bfseries 7.0 & 22.4 & 21.0 & 14.4 & \bfseries 25.5 & 506.9 & 44.3 & 25.8 \\
 & 20 & \bfseries 9.2 & 14.2 & 44.0 & 26.4 & \bfseries 6.4 & 15.8 & 19.8 & 13.8 & \bfseries 20.5 & 461.2 & 40.5 & 23.5 \\
\cline{1-14}
\bottomrule
\end{tabular}

        \label{tab:Averaging_n_steps_by_method}
    \end{table}



\end{document}